\newtheorem{thm}{Theorem}
\newtheorem{cor}{Corollary}
\newtheorem{prop}{Proposition}
\newtheorem{theorem}{Theorem}[section]
\newtheorem{corollary}[theorem]{Corollary}
\newtheorem{proposition}[theorem]{Proposition}
\newtheorem{lemma}[theorem]{Lemma}
\newtheorem{definition}[theorem]{Definition}
\newtheorem{remark}[theorem]{Remark}
\newtheorem{example}[theorem]{Example}
\def\irr#1{{\rm Irr}(#1)}
\def\irrr#1#2 {\irr {#1 \mid #2}}
\newcommand{\R}{\mathbb R}
\newcommand{\N}{\mathbb N}
\newcommand{\B}{\mathcal{B}}
\newcommand{\Z}{\mathbb Z}
\newcommand{\sfe}{{{\mathbb S}^{n-1}}}
\newcommand{\E}{\mathbb E}
\begin{document}

\title[Random rounding and singular values]{The smallest singular value of heavy-tailed not necessarily i.i.d. random matrices via random rounding}
\author[Galyna V. Livshyts]{Galyna V. Livshyts}

\address{School of Mathematics, Georgia Institute of Technology} \email{glivshyts6@math.gatech.edu}

\begin{abstract} We show the existence of a net near the sphere, such that the values of any matrix on the sphere and on the net are compared via \emph{regularized Hilbert-Schmidt norm}, which we introduce. This allows to construct an efficient net  which controls the length of $Ax$ for any random matrix $A$ with independent columns (no other assumptions are required).

As a consequence we show that the smallest singular value $\sigma_n(A)$ of an $N\times n$ random matrix $A$ with i.i.d. mean zero variance one entries enjoys the following small ball estimate, for any $\epsilon>0$:
$$P\left(\sigma_n(A)<\epsilon(\sqrt{N+1}-\sqrt{n})\right)\leq (C\epsilon\log{1}/{\epsilon})^{N-n+1}+e^{-cN}.$$
The proof of this result requires working with matrices whose rows are not independent, and, therefore, the fact that the theorem about discretization works for matrices with dependent rows, is crucial.

Furthermore, in the case of the square $n\times n$ matrix $A$ with independent entries having concentration function separated from 1, i.i.d. rows, and such that $\E||A||_{HS}^2\leq cn^2$, one has
$$P\left(\sigma_n(A)<\frac{\epsilon}{\sqrt{n}}\right)\leq C\epsilon+e^{-cn},$$
for any $\epsilon>0$. In addition, for $\epsilon>\frac{c}{\sqrt{n}}$ the assumption of i.i.d. rows is not required. Our estimates generalize the previous results of Rudelson and Vershynin \cite{RudVer-square}, \cite{RudVer-general}, which required the sub-gaussian mean zero variance one assumptions, as well as the work of Rebrova and Tikhomirov \cite{RebTikh}, where mean zero variance 1 and i.i.d. entries were required. 

\end{abstract}
\maketitle

\section{Introduction}


Given a random matrix $A$ (i.e., a matrix with random entries), the question of fundamental interest is: \emph{how likely is $A$ to be invertible}? In other words, how likely is $A$ to not compress the space to a subspace? By linearity, the whole information about $A$ can be obtained from its action on the unit sphere. A matrix which is ``well invertible'' sends the unit sphere to a ``fat'' ellipsoid. 

Suppose $A$ is an $N\times n$ matrix acting from $\R^n$ to $\R^N$. We recall that the axes of the ellipsoid $AB_2^n$ are called singular values of $A$. In other words, singular values 
$$\sigma_1(A)\geq...\geq \sigma_n(A)$$ 
are square roots of the eigenvalues of $AA^T.$ In particular, the largest singular value of $A$ (or the operator norm of $A$) is 
$$\sigma_1(A)=\max_{x\in\sfe} |Ax|,$$
and the smallest singular value
$$\sigma_n(A)=\min_{x\in\sfe} |Ax|;$$
here $|\cdot|$ stands for the euclidean norm. In the case of random matrices with i.i.d. standard Gaussian entries, the largest singular value was known, from the work of Gordon \cite{gordon}, to be of order $\sqrt{N}+\sqrt{n}$, and the smallest singular value to be at least of order $\sqrt{N}-\sqrt{n}$. In addition, the limiting case of $N=n$ has the asymptotic $\frac{1}{\sqrt{n}}$, as was shown by Edelman \cite{edelman} and Szarek \cite{szarek}. As for the operator norm (or the largest singular value), it was shown by Bai and Yin \cite{BaiYin} to be of order $\sqrt{N}$ for random matrices with i.i.d. entries having bounded fourth moment. Furthermore, the assumption of the bounded fourth moment was proved to be necessary by Silverstein \cite{Silverstein}. See also Litvak, Spector \cite{LitSpec} for more details and examples. 

An expression for the largest singular value was obtained by Lata\l{}a \cite{Latala} in the case of independent entries \emph{with different} variances, under the assumption of bounded fourth moments. Recently, a very sharp upper estimate in the case of independent entries with different variances was obtained by Van Handel, Lata\l{}a and Youssef \cite{vHLY}, under the assumption that the entries are independent Gaussian, or, more generally, light-tailed.

Historically, studying the small ball behavior of the smallest singular value has been a more difficult question. Litvak, Rudelson, Pajor, Tomczak-Jaegermann \cite{LPRT} have shown that the smallest singular value is bounded from below by $C\sqrt{N}$ with exponentially high probability, in the case of ``tall'' matrices with sub-gaussian entries, i.e. for those matrices whose aspect ratio $\frac{N}{n}$ is sufficiently large. In a breakthrough paper, Rudelson \cite{Rud-square} has obtained an estimate $C\epsilon n^{-\frac{3}{2}}$ for square (i.e., $n\times n$) matrices whose entries are sub-gaussian i.i.d, with probability $\epsilon,$ provided that $\epsilon>\frac{C}{\sqrt{n}}$. It may seem impossible to obtain a small ball probability estimate for $\inf_{x\in\sfe}|Ax|$ for small $\epsilon>0$, without assuming the uniform small ball property for $|Ax|$ with each fixed $x\in\sfe.$ However, incredibly, Rudelson and Vershynin \cite{RudVer-square} have shown that the smallest singular value of a square random matrix with i.i.d. sub-gaussian entries exceeds $\frac{c\epsilon}{\sqrt{n}}$, with probability $\epsilon+e^{-cn}$, for arbitrary $\epsilon>0,$ thereby confirming the conjecture of von Neumann and Goldstine \cite{Neuman}. Their idea was based on exploring the direction and the arithmetic structure of normals to random subspaces, and a precise analysis yielding the required small ball estimates. This followed up an earlier work of Tao and Vu \cite{TaoVu-LitOf}, \cite{taovu}, where certain related ideas were explored. Further, Rudelson and Vershynin \cite{RudVer-general} have shown, for matrices with arbitrary aspect ratio, under the assumption of i.i.d. entries with mean zero, unit variances and sub-gaussian tails, that
$$P\left(\sigma_n(A)<\epsilon(\sqrt{N+1}-\sqrt{n})\right)\leq \left(C\epsilon\right)^{N+1-n}+e^{-cN}.$$
This behavior was new even in the case of Gaussian matrices, in the case when $N\leq n+O(\sqrt{n})$ (in the complementary case, the small ball bound followed from the work of Gordon \cite{gordon}.)

Further, estimates on the smallest singular value for matrices with arbitrary aspect ratio were done by Tao and Vu \cite{taovu-tall}, Feldheim, Sodin \cite{FS}, and Vershynin \cite{Versh-tall}. 

Note that the exponential additions to the small ball estimates cited above are necessary: for matrices with i.i.d. Bernoulli entries, the smallest singular value is zero if at least two columns or rows coincide, that is at least with probability $2^{-n}$. It was shown by Kahn, Kolmos, Szemeredi \cite{KKS}, and improved by Tao, Vu \cite{taovu-1}, \cite{taovu}, and Bourgain, Vu, Wood \cite{BVW}, that the Bernoulli matrix is invertible with probability $1-e^{-cn}$. The exact value of the constant is $\log 2+o(1),$ as was shown in a breakthrough work by Tikhomirov \cite{TikhErd} very recently.


For square matrices, in addition to the sub-gaussian estimate, Rudelson and Vershynin showed in \cite{RudVer-square} a weak small-ball estimate under only the assumption of the bounded fourth moment. The reason for the fourth moment assumption playing a role for the estimates on the smallest singular value is simply that the ``folklore'' methods use the estimates on the operator norm (since it is the ``Lipschitz constant'' for $|Ax|$ and hence is required in the net argument), which, as we already pointed out, requires the bounded fourth moment. 

For that reason, it remained unclear for a while, if the fourth moment assumption is essential for estimating the smallest singular value. In a breakthrough work, Tikhomirov \cite{Tikh} obtained the limiting behavior for the smallest singular value for the matrices with independent unit variance entries, without assuming bounded fourth moments, in the case of tall matrices. Furthermore, in \cite{Tikh-nomoments} Tikhomirov found an estimate for the smallest singular value of a ``tall'' random matrix, that is, when $N\geq (1+\mu)n$, without assuming any moment constrains. In addition, in the case of square matrices, Rebrova and Tikhomirov \cite{RebTikh} recovered \emph{the full strength} of the sub-gaussian result of Rudelson and Vershynin from \cite{RudVer-square}, assuming only i.i.d mean zero variance one entries. Very recently, Guedon, Litvak, Tatarko \cite{GLT} extended the method of Rebrova and Tikhomirov to get the bound on the smallest singular values of ``tall'' matrices, as well as to study the geometry of random polytopes. In all the cases, the crucial point is to bypass the estimate on the operator norm, by discretizing the sphere in a non-standard way.

Recently, Tikhomirov \cite{Tikh1} found a sharp small ball estimate for square random matrices whose entries have bounded density, which does not depend on moments. Further, in regards to matrices whose entries are not i.i.d., Cook \cite{cook} obtained a general estimate for ``structured'' random matrices. Convergence results for the smallest singular values were obtained under weak assumptions by Bai, Yin \cite{BaiYin-small}, Mendelson, Paouris \cite{MenPao}, and later by Koltchinskii, Mendelson \cite{MenKol}, and others. We omit a detailed discussion about the long and rich history of the smallest singular values estimates, and refer the reader to a survey by Rudelson, Vershynin \cite{RudVer-icm}. 

\medskip
\medskip

We shall need to assume the following property

\begin{definition}\label{concentration}
A random variable $X$ is said to have concentration function separated from 1 (or, for brevity, bounded concentration function), if there exist absolute constants $a\in\R^+$ and $b\in (0,1)$ such that
$$\sup_{z\in\R}P(|X-z|<a)<b.$$ 
\end{definition}

\begin{remark}
Note that a priori, without assuming that the entries have bounded concentration function, we cannot have a big smallest singular value: for instance, what if all the entries take one value with very large probability? Then we are very likely to have a pair of equal rows or columns, in which case the smallest singular value would be 0.
\end{remark}

We begin to formulate our results.

\begin{thm}\label{main1}
Fix any $p>0$. Let $A$ be an $n\times n$ random matrix with independent entries, and suppose that the entries of $A$ have uniformly bounded concentration function. Assume that there exists an absolute constant $K>0$ such that 
\begin{equation}\label{HSbound-11}
\sum_{i=1}^n \left(\E|Ae_i|^{2p}\right)^{\frac{1}{p}}\leq Kn^2,
\end{equation}
and 
\begin{equation}\label{HSbound-12}
\sum_{i=1}^n \left(\E|A^T e_i|^{2p}\right)^{\frac{1}{p}}\leq Kn^2.
\end{equation}
Then: 
\begin{itemize}
\item for every $\epsilon>\frac{c}{\sqrt{n}}$,
\begin{equation}\label{keyoutcome}
P\left(\sigma_n(A)<\frac{\epsilon}{\sqrt{n}}\right)\leq C\epsilon+e^{-c\min(p,1)n},
\end{equation}
where $C$ and $c$ are absolute constants which depend (polynomially) only on $K$ from (\ref{HSbound-1}) and $a$ and $b$ from Definition \ref{concentration}, and $C$ may additionally depend on $p.$

\item If, in addition, the rows of $A$ are identically distributed, we have (\ref{keyoutcome}) for every $\epsilon>0$.
\end{itemize}
\end{thm}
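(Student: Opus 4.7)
The plan is to adopt the compressible/incompressible decomposition of the sphere introduced by Rudelson and Vershynin, fixing small constants $\delta,\rho>0$ depending only on $a,b$ from Definition \ref{concentration}, and writing
$$\sfe = \mathrm{Comp}(\delta,\rho) \cup \mathrm{Incomp}(\delta,\rho),$$
where $\mathrm{Comp}(\delta,\rho)$ consists of vectors within Euclidean distance $\rho$ from a $\delta n$-sparse unit vector. The novelty of the paper is in the compressible analysis, so I would address that first.

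For compressible vectors, the classical argument requires a bound on $\|A\|$, which is unavailable here. Instead, I would invoke the random rounding discretization developed earlier in the paper: there exists a net $\mathcal{N}$ of cardinality $\exp(Cn)$ such that, for every $x\in\sfe$, some $y\in\mathcal{N}$ satisfies $|A(x-y)|\leq\kappa\sqrt{n}$, where the approximation error is controlled via a \emph{refined Hilbert--Schmidt norm} whose deviation bound uses assumption (\ref{HSbound-11}) and yields probability at least $1-e^{-c\min(p,1)n}$. In parallel, for each fixed $y\in\mathcal{N}$, the $n$ independent coordinates $\langle A_i,y\rangle$ concentrate via bounded concentration function, giving $P(|Ay|<c_0\sqrt{n})\leq e^{-cn}$. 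A union bound over $\mathcal{N}$ then produces
$$P\!\left(\inf_{x\in\mathrm{Comp}(\delta,\rho)}|Ax|<c_1\sqrt{n}\right)\leq e^{-c_2\min(p,1)n}.$$
The analogous bound for $A^T$ (which requires (\ref{HSbound-12})) shows that compressibility of the column-span normals is also an exponentially rare event.

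For incompressible vectors, I would apply the invertibility via distance reduction:
$$P\!\left(\inf_{x\in\mathrm{Incomp}(\delta,\rho)}|Ax|<\frac{\epsilon}{\sqrt{n}}\right)\leq \frac{C}{n}\sum_{k=1}^n P\!\left(\mathrm{dist}(A_k,H_k)<\frac{C'\epsilon}{\sqrt{n}}\right),$$
where $A_k$ is the $k$-th column and $H_k$ is the span of the others. Writing $\mathrm{dist}(A_k,H_k)=|\langle A_k,v_k\rangle|$ for $v_k$ a unit normal to $H_k$, the compressible analysis applied to $A^T$ guarantees that $v_k\in\mathrm{Incomp}(\delta,\rho)$ with probability at least $1-e^{-c\min(p,1)n}$. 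On this event, the bounded concentration function assumption, tensorized along the $\Omega(n)$ spread coordinates of $v_k$ via a Berry--Esseen type bound, yields $P(|\langle A_k,v_k\rangle|<\epsilon/\sqrt{n})\leq C\epsilon + C'/\sqrt{n}$, establishing the first bullet for $\epsilon>c/\sqrt{n}$. For the second bullet, identically distributed rows permit invoking the arithmetic structure (LCD) machinery: one shows the random normal $v_k$ has large LCD except with probability $e^{-cn}$, on which the small ball probability improves to $C\epsilon$ for all $\epsilon>0$.

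The main obstacle is the compressible step: one must convert the weak $p$-th moment control (\ref{HSbound-11}) into exponential concentration of $|A(x-y)|$ without any operator norm bound. This is exactly what the random rounding net and the refined Hilbert--Schmidt norm large deviation estimates of the paper are designed to deliver, with the exponent degrading to $\min(p,1)$ through the truncation at scale $\sqrt{n}$ needed to pass from $p$-th moments to a sub-exponential deviation bound.
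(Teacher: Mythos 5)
Your plan follows the paper's proof in all essential respects: compressible/incompressible decomposition, random-rounding net plus tensorized small-ball estimate on the compressible set for both $A$ and $A^T$, invertibility via distance for the incompressible set, anti-concentration for the random normal for part one, and the LCD machinery (Theorem \ref{smallballmain}) for part two. Two corrections are worth recording. First, the anti-concentration ingredient for part one must be Rogozin's inequality (Theorem \ref{rogozin} and its consequence Lemma \ref{rogozin-lemma}), \emph{not} a Berry--Esseen type bound: Berry--Esseen requires a finite third moment, which is precisely unavailable in the heavy-tailed setting, whereas Rogozin needs only bounded concentration function of the coordinates. That substitution matters and would break a literal version of your argument. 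Second, in the invertibility-via-distance reduction (Lemma \ref{invdist}) the scales are $P\bigl(\inf_{Incomp}|Ax|\leq \epsilon\rho/(2\sqrt{\delta n})\bigr)\leq (\delta n)^{-1}\sum_j P\bigl(\mathrm{dist}(Ae_j,H_j)\leq\epsilon\bigr)$: the distance threshold is $\Theta(\epsilon)$, not $\epsilon/\sqrt{n}$; only the $|Ax|$-side carries the $n^{-1/2}$. Relatedly, for the compressible step you should restrict the net to $\mathrm{Comp}(\delta,\rho)$ and use that its covering number is $e^{\delta' n}$ with $\delta'$ tunably small; a net of cardinality $e^{Cn}$ over the whole sphere with a fixed absolute $C$ would defeat the union bound against the $e^{-cn}$ small-ball estimate.
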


Theorem \ref{main1} generalizes the results of Rudelson, Vershynin \cite{RudVer-square} and Rebrova and Tikhomirov \cite{RebTikh}. We emphasize that in part 1 of the theorem, very mild assumptions are placed on the matrix, and various structural assumptions such as mean zero, equal variance, i.i.d. entries are not required. The assumption of i.i.d. rows in part 2 is required, since it is needed for deep analytic small ball estimates of Rudelson and Verhsynin \cite{RudVer-square}, \cite{RudVer-general}, and removing this assumption appears rather difficult. We emphasize that part 2 of Theorem \ref{main1} is, in fact, a much deeper fact than part 1, although it does require an extra assumption, and the complexity of the proof for part 2 is a lot greater.

For completeness, we outline a corollary in the case $p=1$.

\begin{cor}\label{main1cor}
Let $A$ be an $n\times n$ random matrix with independent entries. Suppose further that the entries of $A$ have bounded concentration function, and that there exists an absolute constant $K>0$ such that 
\begin{equation}\label{HSbound-1}
\E ||A||_{HS}^2\leq Kn^2.
\end{equation}
Then for every $\epsilon>\frac{c}{\sqrt{n}}$,
$$P\left(\sigma_n(A)<\frac{\epsilon}{\sqrt{n}}\right)\leq C\epsilon+e^{-cn},$$
where $C$ and $c$ are absolute constants which depend (polynomially) only on $K$ from (\ref{HSbound-1}) and $a$ and $b$ from Definition \ref{concentration}. Further, if the rows of $A$ are identically distributed, the conclusion follows for every $\epsilon>0.$
\end{cor}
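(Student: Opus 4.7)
The plan is to obtain Corollary \ref{main1cor} as the immediate $p=1$ specialization of Theorem \ref{main1}. The only thing to verify is that the single assumption $\E\|A\|_{HS}^2 \leq K n^2$ of the corollary implies both the column and row Hilbert--Schmidt-type hypotheses (\ref{HSbound-11}) and (\ref{HSbound-12}) when $p=1$; once this is done, there is nothing left to prove.

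For this, I would unwind the $p=1$ version of (\ref{HSbound-11}). The inner expression $\left(\E|Ae_i|^{2p}\right)^{1/p}$ collapses to $\E|Ae_i|^2$, and linearity of expectation together with the identity $\|A\|_{HS}^2 = \sum_{i=1}^n |Ae_i|^2$ gives
$$\sum_{i=1}^n \left(\E|Ae_i|^2\right) \;=\; \E\sum_{i=1}^n |Ae_i|^2 \;=\; \E\|A\|_{HS}^2 \;\leq\; Kn^2.$$
The dual bound (\ref{HSbound-12}) follows identically, using the isometry $\|A^T\|_{HS} = \|A\|_{HS}$, so that $\sum_{i=1}^n \E|A^T e_i|^2 = \E\|A\|_{HS}^2 \leq K n^2$ as well. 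Thus both hypotheses of Theorem \ref{main1} hold with the same absolute constant $K$ and with $p = 1$.

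With the hypotheses verified, I would simply invoke Theorem \ref{main1} with $p=1$, obtaining
$$P\!\left(\sigma_n(A) < \frac{\epsilon}{\sqrt n}\right) \;\leq\; C\epsilon + e^{-c\min(1,1)\,n} \;=\; C\epsilon + e^{-cn}$$
in the stated range $\epsilon > c/\sqrt n$, and the extension to all $\epsilon > 0$ whenever the rows are identically distributed, directly from the corresponding case of the theorem. The constants $C, c$ inherit their polynomial dependence on $K, a, b$ from the theorem; since $p = 1$ is fixed, no extra $p$-dependence enters $C$. There is no genuine obstacle in this derivation: the corollary is a notational convenience repackaging the most commonly encountered Hilbert--Schmidt hypothesis, and the entire analytic content lies in Theorem \ref{main1}.
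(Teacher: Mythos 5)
Your proposal is correct and is exactly the (unwritten but intended) derivation of Corollary \ref{main1cor} from Theorem \ref{main1}: specialize to $p=1$, note that $\sum_i \E|Ae_i|^2 = \E\|A\|_{HS}^2 = \E\|A^T\|_{HS}^2 = \sum_i \E|A^Te_i|^2$, so both hypotheses (\ref{HSbound-11}) and (\ref{HSbound-12}) collapse to the single assumption (\ref{HSbound-1}). The paper gives no separate proof of this corollary precisely because it is this immediate; your argument matches the intended route.
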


\begin{example}\label{example1}
Consider a random matrix $B$ with i.i.d. variance one entries (for example, Bernoulli $\pm1$), and let $A=[\sqrt{n}Be_1, Be_2,...,Be_n]$. I.e., consider a matrix whose first column is $\sqrt{n}$ times heavier than the rest. By Corollary \ref{main1cor}, the smallest singular value of $A$ has the same small ball behavior as the smallest singular value of $B,$ for $\epsilon>0.$ 

Alternatively, we may let $A_1=(a_{ij})$ with $a_{11}=\pm n$ and $a_{ij}=\pm 1$ for $i,j\neq 1$. Then for every $\epsilon>\frac{100}{\sqrt{n}},$ we have 
$$P\left(\sigma_n(A_1)<\frac{\epsilon}{\sqrt{n}}\right)\leq C\epsilon.$$
It is worth noting that the operator norm of $A_1$ is in fact of order $n$ with high probability.
\end{example}

Next, we outline another corollary of Theorem \ref{main1}, with slight change of notation.

\begin{cor}\label{main2cor}
Fix any $p>0.$ Let $A$ be an $n\times n$ random matrix with i.i.d. entries. Suppose further that the entries of $A$ have bounded concentration function, and that there exists an absolute constant $K>0$ such that 
\begin{equation}\label{HSbound-2}
\left(\E |Ae_1|^{p}\right)^{\frac{1}{p}}\leq K\sqrt{n}.
\end{equation}
Then for every $\epsilon>0$,
$$P\left(\sigma_n(A)<\frac{\epsilon}{\sqrt{n}}\right)\leq C\epsilon+e^{-c\min(1, p)n},$$
where $C$ and $c$ are absolute constants which depend (polynomially) only on $K$ from (\ref{HSbound-2}) and $a$ and $b$ from Definition \ref{concentration}, and $C$ additionally depends on $p$. 
\end{cor}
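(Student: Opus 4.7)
The plan is to derive Corollary \ref{main2cor} as a direct consequence of part 2 of Theorem \ref{main1}, by specializing the theorem's exponent parameter to $p' := p/2$ and then verifying its hypotheses under the i.i.d.\ assumption.

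First I would check the structural hypotheses. Since the entries of $A$ are i.i.d., they are in particular independent and have uniformly bounded concentration function (they all share one common distribution, so the constants $a,b$ of Definition \ref{concentration} serve for every entry), and the rows of $A$ are identically distributed. Consequently, once the moment bounds are in place, we will be in the regime of the second bullet of Theorem \ref{main1}, yielding the small ball estimate for every $\epsilon>0$ rather than only $\epsilon>c/\sqrt n$.

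Next I would verify the Hilbert--Schmidt-type conditions (\ref{HSbound-11}) and (\ref{HSbound-12}) for the parameter $p'=p/2$. The key algebraic identity is
\[
\bigl(\E |Ae_i|^{2p'}\bigr)^{1/p'} \;=\; \bigl(\E |Ae_i|^{p}\bigr)^{2/p}.
\]
Because the entries are i.i.d., every column $Ae_i$ has the distribution of $Ae_1$, so the hypothesis $(\E|Ae_1|^p)^{1/p}\leq K\sqrt n$ gives $(\E|Ae_i|^p)^{2/p}\leq K^2 n$ for each $i$, whence
\[
\sum_{i=1}^n \bigl(\E|Ae_i|^{2p'}\bigr)^{1/p'} \;\leq\; K^2 n^2.
\]
The joint distribution of the entries is invariant under transposition, so $A^T$ has the same distribution as $A$, and the identical bound holds for the rows $A^T e_i$; this establishes (\ref{HSbound-12}). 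Thus both (\ref{HSbound-11}) and (\ref{HSbound-12}) hold with constant $K^2$ in place of $K$.

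Finally, applying part 2 of Theorem \ref{main1} with parameter $p'=p/2$ delivers, for every $\epsilon>0$,
\[
P\Bigl(\sigma_n(A)<\frac{\epsilon}{\sqrt n}\Bigr) \;\leq\; C\epsilon + e^{-c\min(1,p/2)n},
\]
and absorbing the factor $1/2$ into $c$ produces the claimed $e^{-c\min(1,p)n}$ tail. The asserted polynomial dependence of $C,c$ on $K$, $a$, $b$, and the extra dependence of $C$ on $p$, are inherited directly from the corresponding dependence on $K^2$, $a$, $b$, and $p'$ in Theorem \ref{main1}. There is no genuine obstacle here: the entire content of the corollary is repackaging Theorem \ref{main1} under the i.i.d.\ assumption, and the only points requiring care are the exponent rescaling $p\leftrightarrow 2p'$ and the use of distributional invariance under transpose to reduce the row moment bound to the column moment hypothesis.
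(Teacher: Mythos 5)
Your proof is correct and is essentially the intended one: the paper states Corollary~\ref{main2cor} as an immediate consequence of Theorem~\ref{main1} without giving a separate argument, and your specialization $p' = p/2$, the observation that i.i.d.\ entries force $Ae_i \stackrel{d}{=} Ae_1$ and $A^T \stackrel{d}{=} A$, and the resulting verification of (\ref{HSbound-11})--(\ref{HSbound-12}) with $K^2$ is exactly the right reduction. The only minor point, which you handle correctly, is the comparison $\min(1,p/2)\geq \tfrac{1}{2}\min(1,p)$ so that the tail exponent can be written in terms of $\min(1,p)$ after absorbing the factor $\tfrac12$ into $c$.
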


\begin{example}\label{example2}
Consider an $n\times n$ matrix $A$ with i.i.d. entries $a_{ij}$, each distributed according to the density 
\[
	f(s)= 
	\begin{cases}
	\frac{1}{2\sqrt{s}} ,& s\in[-\frac{1}{n^2}, \frac{1}{n^2}],\\
	\frac{1}{2}\cdot\frac{1-\frac{4}{n}}{1-\frac{1}{n^2}}, & s\in [-1,1]\setminus [-\frac{1}{n^2}, \frac{1}{n^2}],\\
	\frac{1}{2ns^3}, & |s|\in [1,\infty).
	\end{cases}
	\]
	
	Note that $\E a_{ij}=0$, but $\E a_{ij}^2=\infty,$ and therefore the result of Rebrova and Tikhomirov \cite{RebTikh} is not applicable to estimate the smallest singular value of $A$. Further, the density of $a_{ij}$ is unbounded, and hence the result of Tikhomirov \cite{Tikh1} (about random matrices whose entries have bounded density) is not applicable either. However, Corollary \ref{main2cor} asserts that
	$$P\left(\sigma_n(A)<\frac{\epsilon}{\sqrt{n}}\right)\leq C\epsilon+e^{-cn}.$$
Indeed, note that for $t\geq \sqrt{n}$,
$$P\left(\sum_{j=1}^n a_{ij}^2\geq t^2\right)\leq nP\left(|a_{11}|\geq \frac{t}{\sqrt{n}}\right)=2n\int_{\frac{t}{\sqrt{n}}}^{\infty} \frac{1}{2ns^3} ds=\frac{2n}{t^2}.$$
Consequently,
$$\E\sqrt{\sum_{i=1}^n a_{1i}^2}=\int_0^{\infty} P\left(\sum_{j=1}^n a_{ij}^2\geq t^2\right) dt\leq$$
$$\sqrt{n}+\int_{\sqrt{n}}^{\infty}\frac{2n}{t^2} dt=3\sqrt{n},$$
and Corollary \ref{main2cor} is hence applicable with $p=1$ and $K=3.$	
\end{example}

\begin{remark} In fact, in Example \ref{example2} it is crucial that the distribution of $a_{ij}$ has ``minor pathology'', i.e. depends on $n;$ the author is grateful to Sergey Bobkov for relating to her this fact. If the distribution of $a_{ij}$ is fixed, and does not depend on $n$, and $\E a_{ij}^2=\infty$, while $\E |a_{ij}|=1$, then necessarily $\E|Ae_1|>>\sqrt{n}$; this follows from Feller's law of large numbers \cite{feller}, which sharpened Kolmogorov's SLLN. A way to get an intuition about this fact would be to estimate $|Ae_1|\geq \|Ae_1\|_{\infty}$, to use Markov's inequality and reduce the conclusion to an inequality about the distribution of $\max (|a_{11}|,...,|a_{n1}|)$, which boils down to an assertion about the heavy tails of $|a_{11}|$.

Therefore, Corollary \ref{main2cor} is not applicable for a random matrix with $\E|a_{ij}|=1$, unless either $\E a_{ij}^2=C$ (and then the result of Rebrova and Tikhomirov \cite{RebTikh}, or Corollary \ref{main1cor} are applicable), or the distribution of $a_{ij}$ is pathological, and depends on $n.$ Theorem \ref{main1}, however, does show that minor pathologies, such as the one described in Example \ref{example2}, do not affect the small ball behavior of $\sigma_n(A).$

Besides the Example \ref{example2}, there is another, more probabilistic way of constructing such pathological distribution, which was related to the author by Sergey Bobkov. Namely, consider a nice distribution $Y$ (for example, Bernoulli $\pm 1$), such that $\E |Y|^2=1$, and a bad distribution $Z$, such that $\E |Z|=1$ but $\E Z^2=\infty$. Suppose also that $Y$ and $Z$ are independent, and $\E Y=\E Z=0$. Consider
$$X=\sqrt{(1-\frac{1}{n})Y^2+\frac{1}{n}Z^2},$$
and let $A$ be a random matrix with i.i.d. entries $a_{ij}$ distributed as $X$. Denote
$$\tilde{Y}=\sqrt{Y_1^2+...+Y_n^2}$$
and
$$\tilde{Z}=\sqrt{Z_1^2+...+Z_n^2},$$
where $Y_i$ and $Z_i$ are i.i.d. copies of $Y$ and $Z$ correspondingly. Then
$$\E|Ae_1|\leq \sqrt{1-\frac{1}{n}}\E \tilde{Y}+\frac{1}{\sqrt{n}}\E \tilde{Z}\leq C\sqrt{n}.$$

\medskip
\medskip
Let us also note, that in fact it is not very clear if one should in general expect the nice small ball behavior when the square matrix has, say, i.i.d. entries with $\E|a_{ij}|=1$ but $\E a_{ij}^2=\infty.$
\end{remark}


\medskip

Next, we formulate a result concerning matrices with arbitrary aspect ratio.

\begin{thm}\label{main}
Let $N\geq n\geq 1$ be integers. Let $A$ be an $N\times n$ random matrix with mean zero independent entries, and suppose additionally that the rows of $A$ are identically distributed. Suppose further that the entries of $A$ have bounded concentration function, and that there exists an absolute constant $K>0$ such that for every $\sigma\subset\{1,...,n\}$ with $$\#\sigma=\min(N-n+1, n),$$ we have
\begin{equation}\label{HSbound}
\E\frac{1}{\#\sigma} \sum_{i\in\sigma}|Ae_i|^2\leq KN.
\end{equation}
Then for every $\epsilon>0$,
$$P\left(\sigma_n(A)<\epsilon(\sqrt{N+1}-\sqrt{n})\right)\leq (C\epsilon\log{1}/{\epsilon})^{N-n+1}+e^{-cN},$$
where $C$ and $c$ are absolute constants which depend (polynomially) only on $K$ from (\ref{HSbound}) and $a$ and $b$ from Definition \ref{concentration}. 
\end{thm}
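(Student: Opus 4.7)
The plan is to follow the Rudelson--Vershynin framework \cite{RudVer-general} for the smallest singular value of rectangular random matrices, substituting their subgaussian operator-norm net argument with the refined Hilbert--Schmidt discretization developed earlier in this paper. The first step is the standard decomposition of the sphere $\sfe$ into compressible vectors $\mathrm{Comp}(\delta,\rho)$ (those within Euclidean distance $\rho$ of some $\delta n$-sparse unit vector) and incompressible vectors $\mathrm{Incomp}(\delta,\rho)$, for small parameters $\delta, \rho$ depending on $a$, $b$, $K$. Writing $P_{\mathrm{Comp}}$ and $P_{\mathrm{Incomp}}$ for the probability of $\inf_x |Ax| < \epsilon(\sqrt{N+1}-\sqrt{n})$ restricted to the respective sets,
$$P\bigl(\sigma_n(A) < \epsilon(\sqrt{N+1}-\sqrt{n})\bigr) \le P_{\mathrm{Comp}} + P_{\mathrm{Incomp}}.$$

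For $P_{\mathrm{Comp}}$ I would aim for a bound of $e^{-cN}$ for all $\epsilon \le c_0$. The ingredients are: (i) a pointwise tensorization estimate $P(|Ax| < c_1\sqrt{N}) \le e^{-c_2 N}$ for every fixed $x \in \sfe$, which follows from i.i.d.\ rows and bounded concentration function; and (ii) a covering of $\mathrm{Comp}(\delta,\rho)$ by a net of cardinality at most $\exp(C\delta n \log(1/\delta\rho))$. In the subgaussian setting a union bound over the net against $\|A\| \lesssim \sqrt{N}$ concludes the argument, but the operator norm is unavailable here. Instead, one uses the refined HS discretization of this paper: transfer from the net to $\mathrm{Comp}(\delta,\rho)$ is controlled by an averaged HS-type quantity which, by \eqref{HSbound} and a large deviation estimate, is $O(\sqrt{N})$ with probability $1-e^{-cN}$. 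With $\delta, \rho$ small enough, the $e^{-c_2 N}$ pointwise decay defeats the cardinality of the net.

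For $P_{\mathrm{Incomp}}$, the standard geometric reduction gives, for every $x \in \mathrm{Incomp}(\delta,\rho)$,
$$|Ax| \ge c\,\rho\,n^{-1/2}\,\mathrm{dist}(X^i, H_i)$$
for some (random) index $i$, where $X^i$ is the $i$th column and $H_i = \mathrm{span}(X^j : j\ne i)$. Averaging over $i$ and using the identical distribution of rows reduces the problem to the small-ball bound
$$P\bigl(\mathrm{dist}(X^1, H_1) < C\epsilon\sqrt{N-n+1}\bigr) \le (C\epsilon)^{N-n+1}(\log 1/\epsilon)^{N-n+2} + e^{-cN}.$$
Conditioning on $H_1$ gives an $(N-n+1)$-dimensional random subspace $E = H_1^\perp$ with $\mathrm{dist}(X^1, H_1) = |P_E X^1|$. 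The heart of the argument is to show that a unit normal of $H_1$ has large Least Common Denominator with probability $1 - e^{-cN}$, so that the product Littlewood--Offord / Esseen-type estimate yields the desired small-ball bound for $|P_E X^1|$.

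The main obstacle is this LCD analysis. In \cite{RudVer-general} it rests on a net argument for the unit sphere of $E$ controlled by the subgaussian bound $\|A\| \lesssim \sqrt{N}$; under the heavy-tailed hypothesis \eqref{HSbound} this is unavailable, and the substitute is again the refined HS discretization of this paper. This substitution costs logarithmic factors in $\epsilon$ at each covering scale, and tracking them through the $(N-n+1)$-dimensional LCD argument produces the $(\log 1/\epsilon)^{N-n+2}$ factor in the conclusion. The most delicate quantitative point will be to balance the truncation level used to build the net in $E$ against both $\epsilon$ and the exponential probability scale $e^{-cN}$, so that the compressible and incompressible estimates can be combined without losing either the power $(N-n+1)$ or the $e^{-cN}$ deviation term.
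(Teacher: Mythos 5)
Your compressible-vector plan and the general scaffolding (decompose $\sfe$, bound the compressible part by $e^{-cN}$ via tensorization and the refined HS net, then reduce the incompressible part to a distance estimate and an LCD structure theorem for a random subspace) are all on the right track. But the incompressible step as you have written it has a quantitative gap that is fatal precisely in the rectangular regime. You invoke the square-matrix reduction $|Ax|\ge c\rho n^{-1/2}\,\mathrm{dist}(X^i,H_i)$ and claim it reduces matters to bounding $P\bigl(\mathrm{dist}(X^1,H_1)<C\epsilon\sqrt{d}\bigr)$ with $d=N-n+1$. That is not what the reduction delivers. Since $\sqrt{N+1}-\sqrt{n}\asymp d/\sqrt{n}$ for $N\asymp n$, the inequality $|Ax|<\epsilon(\sqrt{N+1}-\sqrt{n})$ together with $|x_i|\ge\rho/\sqrt{n}$ only gives $\mathrm{dist}(X^i,H_i)\lesssim \epsilon d/\rho$, i.e.\ a threshold of order $\epsilon d$, not $\epsilon\sqrt d$. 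Since $\mathrm{dist}(X^1,H_1)=|P_{H_1^\perp}X^1|$ lives in a $d$-dimensional subspace with typical magnitude $\sqrt d$, the best small-ball estimate you can hope for is $P(\mathrm{dist}<s\sqrt d)\le(Cs)^d$, and plugging $s\sqrt d=\epsilon d$ gives $(C\epsilon\sqrt d)^d$. The parasitic factor $d^{d/2}$ is exactly what the single-column reduction cannot remove, and it kills the claimed rate $(C\epsilon)^{d}$ whenever $d$ grows linearly in $n$.

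This is why the paper (following Rudelson--Vershynin \cite{RudVer-general}) uses the \emph{multi-coordinate} reduction of Lemma \ref{invdistgeneral}: one fixes a set $J\subset\{1,\dots,n\}$ with $\#J=d$, restricts to spread vectors $z\in\mathrm{Spread}_J$, and estimates $\mathrm{dist}(Az,H_{J^c})=|Wz|$, where $W=P_{H_{J^c}}A|_{\R^J}$ is an $N\times d$ matrix. Aggregating over the $d$ coordinates of $J$, each of size $\gtrsim\rho/\sqrt n$, recovers exactly the missing factor $\sqrt d$ and produces the correct threshold $\epsilon\sqrt d$ for $|Wz|$. Then the LCD structure theorem (Theorem \ref{smallballmain}, applied with $m=2d-1$, not $m=d$, since $H_{J^c}$ has codimension $2d-1$) plus a net on $\mathrm{Spread}_J$ of cardinality $(C/\epsilon)^{d-1}$ gives the $(C\epsilon)^d$ rate. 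Finally, the logarithmic factor $(\log 1/\epsilon)^{d+1}$ does not come from the LCD analysis, as you suggest; it is produced by a decoupling argument (Lemma \ref{decoupling}) combined with a dyadic iteration over the scales $\kappa=10,20,\dots,1/\epsilon$ in the quantity $\B_\kappa(W)$ (Lemma \ref{policemen} and Proposition \ref{keyfinal}), which is the substitute here for the missing operator norm bound. You would need to replace the single-column distance step with the spread-vector reduction and build this iterated decoupling into your incompressible estimate before the argument closes.
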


We would like to point out, that in the ``tall'' case, when $N\geq C_0 n$ for a sufficiently large absolute constant $C_0>0$, the implications of Theorem \ref{main} follow under significantly weaker assumptions, as shall be seen in Section 5 in Proposition \ref{tall-final}.

\begin{remark}\label{uneavenness}
Note that the bounded concentration function condition imposes that there exists an absolute constant $c_1>0$ such that for all entries we have $\mathbb{E}a_{ij}^2\geq c_1$. Therefore, $\E \sum_{i\in\sigma}|Ae_i|^2\geq c_1\#\sigma\cdot N.$ Together with (\ref{HSbound}) it determines, up to a multiplicative constant, the value of $\E \sum_{i\in\sigma}|Ae_i|^2.$ However, it does not forbid the situation when one the of columns is significantly heavier than the rest, as was outlined in Example 1.
\end{remark}

For the sake of completeness, we state the following corollary of Theorem \ref{main}.

\begin{cor}\label{coriid}
Let $N\geq n\geq 1$ be integers. Let $A$ be an $N\times n$ random matrix with mean zero i.i.d. entries $a_{ij}$ which satisfy $\E a_{ij}^2=1$. Suppose further that $a_{ij}$ have bounded concentration function. Then for every $\epsilon>0$,
$$P\left(\sigma_n(A)<\epsilon(\sqrt{N+1}-\sqrt{n})\right)\leq \left(C\epsilon\log{1}/{\epsilon}\right)^{N-n+1}+e^{-cN},$$
where $C$ and $c$ are absolute constants which depend (polynomially) only on $a$ and $b$ from Definition \ref{concentration}. 
\end{cor}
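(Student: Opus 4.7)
The plan is to derive Corollary \ref{coriid} directly from Theorem \ref{main} by verifying the latter's hypotheses under the i.i.d., mean zero, unit variance assumption. The structural conditions transfer for free: mean zero is assumed; i.i.d. entries are a fortiori independent; i.i.d. entries certainly yield identically distributed rows; and the bounded concentration function hypothesis is inherited verbatim.

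The only computation required is to check the second-moment condition (\ref{HSbound}). By linearity of expectation and $\E a_{ji}^2 = 1$,
$$\E |Ae_i|^2 = \sum_{j=1}^N \E a_{ji}^2 = N$$
for every column index $i \in \{1,\dots,n\}$. Consequently, for every subset $\sigma \subset \{1,\dots,n\}$, and in particular for $\#\sigma = \min(N-n+1, n)$,
$$\E \frac{1}{\#\sigma} \sum_{i\in\sigma} |Ae_i|^2 = N,$$
so (\ref{HSbound}) holds with $K = 1$.

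Applying Theorem \ref{main} with $K = 1$ then yields the claimed probability bound, and the polynomial dependence of the constants $C, c$ on $K$ degenerates so that they depend only on the parameters $a$ and $b$ of Definition \ref{concentration}, exactly as stated. There is no real obstacle: the entire depth of the estimate lives inside Theorem \ref{main}, and the role of the corollary is merely to record the cleanest special case in which the hypotheses collapse to the familiar mean zero, unit variance, i.i.d. setting.
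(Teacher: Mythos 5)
Your proposal is correct and takes exactly the approach the paper intends: Corollary \ref{coriid} is stated as a direct specialization of Theorem \ref{main}, and your computation that $\E|Ae_i|^2 = N$ under the i.i.d.\ unit-variance assumption verifies hypothesis (\ref{HSbound}) with $K=1$, while the remaining hypotheses (independence, mean zero, identically distributed rows, bounded concentration function) transfer trivially.
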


\medskip

Lastly, for completeness, we outline further applications of our method in the context of works by Mendelson, Paouris \cite{MenPao} and Koltchinskii, Mendelson \cite{MenKol}: we formulate an estimate in the regime of dependent columns of the matrix.

\begin{prop}\label{tallnodep}
Suppose $A$ is an $N\times n$ random matrix with independent rows, and assume that the rows of $A$ satisfy point-wise small ball assumption: for every $x\in\sfe,$
\begin{equation}\label{smb}
\sup_{y\in\R}P(|\langle A^Te_i, x\rangle-y|\leq a)\leq b,
\end{equation}
for some fixed constants $a\in \R$ and $b\in (0,1)$. Suppose further that 
$$\E||A||_{HS}^2\leq KNn,$$
for some $K>0.$ Then there exists $C_0>0$ depending only on $K,a,b$, such that, provided that $N\geq C_0n,$ we have 
$$\E\sigma_n(A)\geq c\sqrt{N},$$
for some $c>0$ depending only on $K,a,b.$
\end{prop}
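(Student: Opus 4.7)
The plan is a classical ``small-ball at a point, then net'' argument, but with the operator-norm-based net step replaced by the paper's refined Hilbert--Schmidt discretization, which is exactly what lets us avoid any moment assumption beyond \(\E\|A\|_{HS}^2\lesssim Nn\).

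\smallskip
\emph{Step 1 (point-wise small ball).} Fix \(x\in\sfe\). By the assumption (\ref{smb}) and the independence of rows, the Bernoulli random variables
\[
\xi_i:=\1_{\{|\langle A^T e_i,x\rangle|\geq a/2\}},\qquad i=1,\dots,N,
\]
are independent with \(\E\xi_i\geq 1-b\). A Chernoff bound yields
\[
P\!\left(\sum_{i=1}^N \xi_i\leq \tfrac{1-b}{2}N\right)\leq e^{-c_0 N},
\]
with \(c_0=c_0(b)>0\). On the complementary event,
\[
|Ax|^2=\sum_{i=1}^N \langle A^T e_i,x\rangle^2\geq \tfrac{a^2}{4}\cdot \tfrac{1-b}{2}N,
\]
so there exists \(\alpha=\alpha(a,b)>0\) such that for every fixed \(x\in\sfe\),
\(
P(|Ax|<\alpha\sqrt{N})\leq e^{-c_0 N}.
\)

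\smallskip
\emph{Step 2 (refined discretization).} I would invoke the paper's main net construction (the ``refinement of the Hilbert--Schmidt norm'' discretization from Section~3, the same tool used to prove Theorem \ref{main}). It supplies a net \(\mathcal{N}\subset\sfe\) with \(|\mathcal{N}|\leq e^{\kappa n}\), for an absolute constant \(\kappa>0\), and a functional \(\F(A)\) such that
\[
\inf_{x\in\sfe}|Ax|\geq \inf_{y\in\mathcal{N}}|Ay|-\F(A),
\]
where \(\F(A)\) is an averaged (rather than worst-case) quantity controlled by \(\|A\|_{HS}\). In particular, one expects \(\E\F(A)^2\leq K'\!N\) whenever \(\E\|A\|_{HS}^2\leq KNn\), so by Markov
\[
P\!\bigl(\F(A)\geq \tfrac{\alpha}{2}\sqrt{N}\bigr)\leq \tfrac{4K'}{\alpha^2}\cdot\tfrac{1}{1}=:\delta<1.
\]
(If necessary one upgrades this to an exponential tail by a standard moment-type truncation against the rows; the important point is that \(\F\) does \emph{not} require any tail assumption on individual entries.)

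\smallskip
\emph{Step 3 (union bound and choice of \(C_0\)).} Union bounding Step~1 over the net gives
\[
P\!\Bigl(\inf_{y\in\mathcal{N}}|Ay|<\alpha\sqrt{N}\Bigr)\leq e^{\kappa n}\cdot e^{-c_0 N}.
\]
Choosing \(C_0:=2\kappa/c_0\), the assumption \(N\geq C_0 n\) makes the right-hand side at most \(e^{-c_0 N/2}\). Combining with Step~2 via a second union bound, with probability at least \(1-\delta-e^{-c_0 N/2}\) we have simultaneously \(\inf_{y\in\mathcal{N}}|Ay|\geq\alpha\sqrt{N}\) and \(\F(A)\leq\tfrac{\alpha}{2}\sqrt{N}\), whence
\[
\sigma_n(A)=\inf_{x\in\sfe}|Ax|\geq \tfrac{\alpha}{2}\sqrt{N}.
\]
Taking expectation (using \(\sigma_n(A)\geq 0\) deterministically) gives \(\E\sigma_n(A)\geq c\sqrt{N}\) with \(c=\tfrac{\alpha}{2}(1-\delta-e^{-c_0 N/2})\), which is bounded below by a positive constant depending only on \(K,a,b\), as required.

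\smallskip
\emph{Main obstacle.} The delicate step is Step~2: one has to verify that the exact net/discretization lemma developed earlier in the paper for the square and rectangular cases applies verbatim (or with trivial modifications) when the rows are only independent and the entries satisfy just a point-wise small-ball condition rather than mean zero/variance one. The other steps are standard Chernoff and union-bound manipulations.
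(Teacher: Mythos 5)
Your proposal is correct and follows essentially the same route as the paper: pointwise small ball at a fixed $x$ (the paper cites the tensorization Lemma~\ref{tensorization}, you use Chernoff --- equivalent here), the random-rounding Hilbert--Schmidt net (Lemma~\ref{HS} in the paper), a union bound of size $e^{C'n}$ against the rate $e^{-c_0 N}$, Markov on $\|A\|_{HS}^2$ to make the net error $\lesssim\epsilon\sqrt{KN}$ with constant probability, and then integrate. Your ``main obstacle'' is in fact not one: Lemma~\ref{HS} is a purely \emph{deterministic} statement about an arbitrary matrix $A$ (the randomness there lives in the rounding, not in $A$), so there is nothing to re-verify about the distribution of $A$; and your claim $\delta<1$ in Step 2 just requires choosing the net scale $\epsilon$ small enough in terms of $K,a,b$, at the cost of enlarging the constant $\kappa$ in $|\mathcal N|\le e^{\kappa n}$ (and hence $C_0$), exactly as in the paper.
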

We note that Proposition \ref{tallnodep} is outlined here since it follows from our method in a straightforward manner; to obtain a more precise and general statement, a lot more work is required, and all such considerations shall be done separately.

\medskip
\medskip

The key notion which we employ, for a matrix $A,$ and a parameter $\kappa>1$ is the \emph{regularized Hilbert-Schmidt norm}
$$\B_{\kappa}(A):=\min_{\alpha_{i}\in [0,1],\,\prod_{i=1}^n \alpha_i\geq \kappa^{-n}} \sum_{i=1}^n \alpha_i^2 |Ae_i|^2.$$

Let us turn to describing how the notion of the regularized Hilbert-Schmidt norm arises, and discussing the strategy of the proofs. Net arguments are often an important part of obtaining various results in asymptotic analysis, including estimates for singular values of random matrices. It is a classical fact that there exists a set $\mathcal{N}\subset \sfe$ of cardinality $\left(\frac{3}{\epsilon}\right)^n$ such that for every $x\in\sfe$ there exists $y\in \mathcal{N}$ such that $|x-y|\leq \epsilon$, and consequently, for any (deterministic) $n\times n$ matrix $A$, we have
\begin{equation}\label{norm-comp}
|A(x-y)|\leq \epsilon\cdot \|A\|.
\end{equation} 
In fact, one may improve (\ref{norm-comp}), as was outlined in the work of Klartag and the author \cite{KLkold} in Lemma 5.1 (see also Lemma \ref{HS} below). Indeed, consider a lattice partitioning of $\R^n$ into cubes of side length $\frac{\epsilon}{\sqrt{n}}$. For every vector $x\in\R^n$, consider a random vector $\eta^x$ which takes values in the vertices of the cube of this partition in which $x$ belongs, in such a way that the coordinates of $\eta^x$ are independent, and $\E(x-\eta^x)=0$. This procedure is called \emph{random rounding}, see Section 3 for more details and history. The aforementioned two properties yield that for any (deterministic) vector $\theta$ in $\R^n$,    
$$\E\langle x-\eta^x,\theta\rangle^2=\sum_{i=1}^n \theta_i^2|x_i-\eta^x_i|^2\leq \frac{\epsilon^2|\theta|^2}{n}.$$
This implies, in fact, that any closed strip of width $2$, centered at any point inside the unit cube, catches at least one of the vertices of the cube. Next, using the above, for any (deterministic) $n\times n$ matrix $A$, we observe, by summing up:
$$\E|A(x-\eta^x)|^2=\E\sum_{j=1}^n \langle A^Te_j,x-\eta^x\rangle^2\leq \frac{\epsilon^2}{n}\|A\|^2_{HS}.$$
For a vector $x$, denote by $S_x$ the collection of values which $\eta^x$ takes with non-zero probability; in the generic situation, $\# S_x=2^n.$ A crucial observation here is that
$$\cup_{x\in\sfe} S_x\subset \mathcal{F},$$
where $\mathcal{F}=\frac{\epsilon}{\sqrt{n}}\Z^n\cap\frac{3}{2}B_2^n\setminus \frac{1}{2}B_2^n$, and one may infer via an elementary argument that $\#\mathcal{F}\leq \left(\frac{10}{\epsilon}\right)^n$. This implies that there exists a net $\mathcal{F}$ not far from the sphere, of cardinality $(10/\epsilon)^n$, such that for every $x\in\sfe$ there exists $y\in\mathcal{F}$ such that for any deterministic $n\times n$ matrix $A$,
\begin{equation}\label{HS-comp-intro}
|A(x-y)|\leq \frac{\epsilon}{\sqrt{n}}\|A\|_{HS}.
\end{equation}
Note that the assertion (\ref{HS-comp-intro}) is strictly stronger that the assertion (\ref{norm-comp}). Let us now recall the example 1.3: consider an $n\times n$ Bernoulli $\pm 1$ matrix $M$, and let $A=[\sqrt{n}Me_1, Me_2,...,Me_n]$. Then $\| A\|_{HS}=\sqrt{2n^2-1}$ for every realization of $A$, while $\|A\|\geq |Ae_1|=n$, for every realization (and, in fact, with exponentially high probability, $\|A\|\leq n+C\sqrt{n}$ by the triangle inequality combined with the standard result about the norm of the Bernoulli matrix). Consequently, for all the realizations of the matrix $A$, the net $\mathcal{F}$ which gives (\ref{HS-comp-intro}) is always $\sqrt{n}$ times more precise than the net $\mathcal{N}$ with the property (\ref{norm-comp}).

Next, let us consider an even more radical example: let $B=[e^{n}Me_1, Me_2,...,Me_n]$. For this matrix, both (\ref{norm-comp}) and (\ref{HS-comp-intro}) yield the comparison
$$|B(x-y)|\leq \epsilon e^n(1+o(1)).$$
However, instead of $\mathcal{F}$, one may consider a net 
$$\mathcal{S}=\left(\frac{e^{-n}\epsilon}{\sqrt{n}}\Z\times\frac{\epsilon}{\sqrt{n}}\Z\times...\times \frac{\epsilon}{\sqrt{n}}\Z\right)\cap\frac{3}{2}B_2^n\setminus \frac{1}{2}B_2^n.$$
The cardinality of $\mathcal{S}$ is bounded by $\left(\frac{5e}{\epsilon}\right)^n$ (which works essentially just as well as $\left(\frac{5}{\epsilon}\right)^n$ for our purposes), but one may notice, using an argument similar to the one outlined above, that for every $x\in\sfe$ there exists $y\in\mathcal{S}$ such that for every realization of $B,$
$$|B(x-y)|\leq \epsilon\sqrt{n},$$
which improves significantly upon the previous comparison. This leads to the notion of the regularized Hilbert-Schmidt norm $\B_{\kappa}.$ Indeed, note that for the matrix $B,$
$$\B_e(B)=n^2<<(1+o(1))e^{2n}=||B||^2_{HS}.$$
We shall consider nets formed by vertices of parallelepipeds of exponentially small volume, which will lead us to a construction of a net, such that for every $x\in\sfe$ one may find $y$ in the net with
\begin{equation}\label{B-kappa-comp-intro}
|A(x-y)|\leq \frac{\epsilon}{\sqrt{n}}\sqrt{\B_{\kappa}(A)},
\end{equation}
for any deterministic matrix $A$. Moreover, our net will not depend on the matrix. While each matrix might have its own preferred parallelepiped generating the net, we will show that only an exponential number of the parallelepiped shapes are required, by discretizing the set of admissible parallelepipeds.

In addition to the above examples, one may consider a random matrix $C$ with $c_{ij}=\pm 1$ for $i+j>2$ and $c_{11}=\pm 1.01^n$. While both operator and Hilbert-Schmidt norms of $C$ are exponentially large with probability 1, we note that $\B_{1.01}(C)\leq n^2,$ and, moreover, $\B_{1.01}(C^T)\leq n^2$, with probability 1. This two inequalities ensure that, in fact, the conclusion of Theorem \ref{main1} holds for the matrix model $C$, even though formally it is not included in the assumptions. In fact, a more general result of this type holds, as will be discussed in Remark \ref{spiky}.

The regularized Hilbert-Schmidt norm $\B_{\kappa}$ is very useful not only for these inhomogeneous highly spiky matrix profiles, but also in the situation of a heavy-tailed matrix with a homogeneous profile. The reason for this is that a sizable portion of the realizations of a heavy-tailed matrix happen to be very spiky. Indeed, consider a random matrix $D$ with independent columns such that $\E d^2_{ij}=1,$ but such that $\E |d|^{3}_{ij}=\infty$, say. Then with considerable probability, $\|D\|\approx n,$ $\|D\|_{HS}>>n,$ however we shall show that $\B_{\kappa}(D)\approx n^2$ with exponentially good probability.  

Another example is Example 1.4: Let $E$ be a random matrix with independent columns such that $\E e^2_{ij}=\infty$, but such that $\E|Ee_i|^p=n^p,$ for a small $p>0.$ Then with high probability, $\|E\|=\|E\|_{HS}=\infty$, however $\B_{\kappa}(E)\approx n^2$, as will be shown in Section 3.

Let us now turn to formulating our key result about net approximation, which will be entailed by the strong large deviation properties of the regularized Hilbert-Schmidt norm.

\begin{thm}[the net bound]\label{keytheoremnets}
Fix $n\in\N.$ Consider any $S\subset\R^n.$ Pick any $\gamma\in(1,\sqrt{n})$, $\epsilon\in (0,\frac{1}{20\gamma})$, $\kappa>1,$ $p>0$ and $s>0.$ Recall that $N(S, \epsilon B_2^n)$ stands for the covering number of $S$ by $\epsilon B_2^n$ (see more details in section 2).
 
There exists a (deterministic) net $\mathcal{N}\subset S+4\epsilon\gamma B_2^n$, with 
\[
	\#\mathcal{N}\leq 
	\begin{cases} N(S, \epsilon B_2^n)\cdot(C_1 \gamma)^{\frac{C_2 n}{\gamma^{0.08}}},& \text{if}\,\,\, \log\kappa\leq \frac{\log 2}{\gamma^{0.09}},\\
	N(S, \epsilon B_2^n)\cdot(C\kappa)^n(\log\kappa)^{n-1},& \text{if}\,\,\, \log\kappa\geq \frac{\log 2}{\gamma^{0.09}},\\
	\end{cases}
	\]
	such that for every $N\in\N$ and every random $N\times n$ matrix $A$ with independent columns, with probability at least 
$$1-\kappa^{-2pn}\left(1+\frac{1}{s^p}\right)^n,$$
for every $x\in S$ there exists $y\in\mathcal{N}$ such that 
\begin{equation}\label{netmain1}
|A(x-y)|\leq C_3\frac{\epsilon\gamma \sqrt{s}}{\sqrt{n}}\sqrt{\sum_{i=1}^n \left(\E|Ae_i|^{2p}\right)^{\frac{1}{p}}}.
\end{equation}
Here $C, C_1, C_2, C_3$ are absolute constants.
\end{thm}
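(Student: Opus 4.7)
My plan is to construct $\mathcal{N}$ as a Minkowski sum $\mathcal{N}=\mathcal{N}_0+\mathcal{G}$, where $\mathcal{N}_0\subset S$ is a standard $\epsilon$-net of $S$ (this contributes the factor $N(S,\epsilon B_2^n)$), and $\mathcal{G}\subset 4\epsilon\gamma B_2^n$ is a universal family of rounding displacements keyed to the definition of $\mathcal{B}_\kappa$. Concretely, I take $\mathcal{G}=\bigcup_{\alpha\in\mathcal D}\mathcal{G}_\alpha$, where $\mathcal D$ is a dyadic discretization of the admissible weight set $\{\alpha\in(0,1]^n:\prod_i\alpha_i\geq\kappa^{-n}\}$ (say $\alpha_i=2^{-k_i}$ with $k_i\in\mathbb{Z}_{\geq 0}$ and $\sum_i k_i\leq n\log_2\kappa$), and $\mathcal{G}_\alpha$ is the anisotropic lattice in $\epsilon\gamma B_\infty^n$ of grid spacing $c_1\alpha_i\epsilon\gamma$ in coordinate~$i$. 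Stars-and-bars bounds $|\mathcal D|$ by $\binom{n+n\log_2\kappa}{n}\leq(C\log\kappa)^n$, while the product constraint bounds $|\mathcal{G}_\alpha|$ by $(C\kappa)^n$, together matching the $(C\kappa\log\kappa)^n$ branch of the cardinality bound.

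Given a realization of $A$, let $\alpha^\star$ achieve the minimum in $\mathcal{B}_\kappa(A)=\sum_i(\alpha_i^\star)^2|Ae_i|^2$ and let $\tilde\alpha\in\mathcal D$ be its nearest dyadic discretization (so $\tilde\alpha_i\in[\alpha_i^\star/2,\alpha_i^\star]$, losing only constants in $\sum \tilde\alpha_i^2|Ae_i|^2\approx\mathcal{B}_\kappa(A)$). For $x\in S$, pick $x_0\in\mathcal{N}_0$ with $|x-x_0|\leq\epsilon$ and round $v:=x-x_0$ to a nearby grid point $z\in\mathcal{G}_{\tilde\alpha}$ via an \emph{independent, mean-zero random rounding} per coordinate, which keeps $|v_i-z_i|\leq c_1\tilde\alpha_i\epsilon\gamma$. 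Conditioning on $A$, the independence of the coordinatewise rounding gives
\begin{equation*}
\mathbb{E}_\mathrm{round}|A(v-z)|^2 \;=\; \sum_i \mathbb{E}(v_i-z_i)^2\cdot|Ae_i|^2 \;\leq\; C\epsilon^2\gamma^2\sum_i\tilde\alpha_i^2|Ae_i|^2 \;\leq\; C\epsilon^2\gamma^2\,\mathcal{B}_\kappa(A).
\end{equation*}
A Markov-based derandomization then selects, for each $x$, a specific $z\in\mathcal{G}_{\tilde\alpha}$ with $|A(v-z)|\lesssim\epsilon\gamma\sqrt{\mathcal{B}_\kappa(A)}$; the further $\sqrt n$ factor needed to reach the target $\frac{\epsilon\gamma\sqrt{s}}{\sqrt n}\sqrt{\sum m_i}$, with $m_i:=(\mathbb{E}|Ae_i|^{2p})^{1/p}$, must be absorbed by tuning the spacing constant $c_1$ and combining with the large-deviation bound on $\mathcal{B}_\kappa$ derived below.

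The tail bound on $\mathcal{B}_\kappa(A)$ is the key probabilistic step and exploits the independence of the columns through a Chernoff argument at exponent $p$. Inserting the admissible trial weighting $\alpha_i^2=\min(1,s\,m_i/|Ae_i|^2)$, one deterministically has $\sum_i\alpha_i^2|Ae_i|^2\leq s\sum m_i$, and the choice is admissible (i.e.\ $\prod_i\alpha_i\geq\kappa^{-n}$) precisely on the event $\sum_i\log_+(|Ae_i|^2/(sm_i))\leq 2n\log\kappa$. Using $\max(1,t)^p\leq 1+t^p$ together with independence of the columns and $\mathbb{E}|Ae_i|^{2p}=m_i^p$,
\begin{equation*}
P\!\Bigl(\sum_i \log_+\bigl(|Ae_i|^2/(sm_i)\bigr)>2n\log\kappa\Bigr)\;\leq\;\kappa^{-2pn}\prod_{i=1}^n\Bigl(1+\tfrac{m_i^p}{(sm_i)^p}\Bigr)\;=\;\kappa^{-2pn}(1+s^{-p})^n,
\end{equation*}
which matches precisely the probability in the statement. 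Inserted into the approximation bound of the previous paragraph, this yields inequality (\ref{netmain1}) on the stated event.

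The main obstacle is the complementary regime $\log\kappa\leq\log 2/\gamma^{0.09}$, in which the constraint $\prod_i\alpha_i\geq\kappa^{-n}$ is so tight that the dyadic discretization above collapses to essentially one admissible weight and the $(C\kappa\log\kappa)^n$ factor ceases to be useful. Here I expect the right construction to be sparsity-based: one forces the displacement $v-z$ to be supported on a subset $I\subset\{1,\dots,n\}$ of size $\sim n/\gamma^{0.08}$ (for instance, the indices of the largest $|Ae_i|$), on which an isotropic sub-grid of spacing proportional to $\epsilon$ is used; the combinatorial count $\binom{n}{|I|}(C\gamma)^{|I|}\leq(C_1\gamma)^{C_2 n/\gamma^{0.08}}$ then matches the first branch. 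The freedom given by the $4\epsilon\gamma$-enlargement of $S$ (rather than an $\epsilon$-enlargement) is what allows the coordinates outside $I$ to be ignored, and the careful interpolation between this sparse-support regime and the anisotropic-grid regime of the previous paragraphs at the threshold $\log\kappa\sim\log 2/\gamma^{0.09}$, while preserving the advertised large-deviation estimate, appears to be the most delicate technical step of the proof.
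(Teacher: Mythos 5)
Your overall architecture matches the paper's: separate the problem into (i)~a deterministic net statement in terms of $\B_\kappa(A)$ (the paper's Theorem~\ref{main-net}), (ii)~a large-deviation bound for $\B_\kappa(A)$ (Lemma~\ref{ldpB}), and then combine. Your Chernoff computation of the tail of $\B_\kappa(A)$ — trial weights $\alpha_i^2=\min(1,s\,m_i/|Ae_i|^2)$, rewriting inadmissibility as $\sum_i\log_+(|Ae_i|^2/(s m_i))>2n\log\kappa$, and applying Markov at power $p$ — is correct and essentially identical to the paper's Lemma~\ref{ldpB}. The dyadic discretization of $\Omega_\kappa$ with a stars-and-bars count serves the same role as the paper's logarithmic change of variables in Lemma~\ref{netsonnets}; both give $(C\log\kappa)^n$. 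So far so good.

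However, there is a genuine gap in the grid construction. You take $\mathcal{G}_\alpha$ with spacing $c_1\alpha_i\epsilon\gamma$ in coordinate~$i$, and your second-moment bound then gives $\E|A(v-z)|^2\lesssim\epsilon^2\gamma^2\B_\kappa(A)$, which you note is off from the target $\frac{\epsilon^2\gamma^2}{n}\B_\kappa(A)$ by a full factor of $n$. Your suggestion to absorb $\sqrt n$ into $c_1$ does not work: replacing $c_1$ by $c_1/\sqrt n$ multiplies the cell count $\prod_i(1/(c_1\alpha_i))$ by $n^{n/2}$, which is far beyond the advertised $(C\kappa\log\kappa)^n$. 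The paper's grid spacing is $\frac{\alpha_i\epsilon\gamma}{\sqrt n}$ from the start, and the missing idea that keeps the cardinality under control is a \emph{sparsity reduction} (Lemma~\ref{coverbigside}): a point of $\epsilon B_2^n$ has at most $n/\gamma^2$ coordinates exceeding $\epsilon\gamma/\sqrt n$ in modulus, so $\epsilon B_2^n$ can be covered by only $(C\gamma)^{Cn/\gamma^2}$ translates of $\frac{2\epsilon\gamma}{\sqrt n}B_\infty^n$ — sub-exponentially many — and \emph{those} small cubes are then each split into $\kappa^n$ anisotropic cells (Lemma~\ref{paral}). Without this pigeonhole step, the count of fine cells needed to cover even one cube of side $\sim\epsilon\gamma$ is super-exponential. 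This sparsity reduction is needed in \emph{both} branches of the cardinality bound, not just the one you flag as delicate.

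On the small-$\kappa$ branch, your sketch is pointed in roughly the right direction but incomplete, and one remark is problematic: you suggest forcing the displacement onto ``the indices of the largest $|Ae_i|$,'' which is a random set, whereas $\mathcal{N}$ must be deterministic. The paper's mechanism (Lemma~\ref{sparserounding}) is that when $\log\kappa\leq\log 2/\gamma^{0.09}$ the randomized rounding \emph{itself} lands in $\mathrm{Sparse}(n/\gamma^{0.08})$ with probability $\geq 9/10$: most $\alpha_i$ must be close to $1$, and most coordinates of $\xi\in B_2^n$ are smaller than $\gamma/(2\sqrt n)$, so the mean-zero coordinatewise rounding outputs $0$ there with probability close to $1$, and a union bound over small supports gives the count. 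The net is then a \emph{deterministic} union over all $(n/\gamma^{0.08})$-sparse supports of lattice points; this is where $\binom{n}{n/\gamma^{0.08}}$-type factors and the $(C_1\gamma)^{C_2 n/\gamma^{0.08}}$ branch come from. This is the single new idea in the small-$\kappa$ case and it is missing from your write-up.

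In short: the probabilistic half (the tail bound for $\B_\kappa$) and the high-level organization are right, but the deterministic net half is off by $\sqrt n$ in scale, and the sparsity mechanisms that the paper uses — Lemma~\ref{coverbigside} for the large-$\kappa$ branch and Lemma~\ref{sparserounding} for the small-$\kappa$ branch — are the content you would need to supply.
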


In order to derive Theorem \ref{keytheoremnets}, we use some ideas from Rebrova and Tikhomirov \cite{RebTikh}. Theorem \ref{keytheoremnets} generalizes Theorem A from their paper, in which it was obtained in the particular case of square random matrix $A$ having all entries i.i.d., mean zero and second moment 1. The net from the paper of Rebrova and Tikhomirov, however, has an advantage of being a subset of the set $S,$ rather than its euclidean neighborhood; this does not play any role in the set up of this paper, and we believe that the net in the neighborhood shall be sufficient also for other potential applications.

\begin{remark}
We emphasize that the only assumption in Theorem \ref{keytheoremnets} is the independence of columns of $A$; of course, the statement is only applicable in the case the right hand side of (\ref{netmain1}) is bounded. The possibility to consider matrices with dependent rows will be crucial in Section 8, when the Theorem \ref{keytheoremnets} will be applied to projections of our matrix $A$.
\end{remark}

\begin{remark} It often occurs in asymptotic analysis, that a logarithmic factor in an estimate appears as a byproduct of the proof, and is, in fact, unnecessary. Clearly, this is the case, for example, with Theorem \ref{main}. In contrast to that, we believe that in Theorem \ref{keytheoremnets} the term $\log \kappa$ might in fact be necessary. 
\end{remark}

\begin{remark} We note that for any set $S\subset\sfe$, the theorem entails that $\mathcal{N}\subset \frac{3}{2}B_2^n\setminus \frac{1}{2}B_2^n$, in view of our assumption on $\epsilon.$ This is crucial throughout the paper, and also is sufficient for all the applications of the net, with the exception of its application in Section 6, where the neighborhood plays a role.
\end{remark}

Theorem \ref{keytheoremnets} might appear overloaded with parameters, however we really need them all. Throughout the paper, we apply Theorem \ref{keytheoremnets} in several situations with different choices of parameters and in completely different regimes, and the precise estimates for the cardinality of the net, the probability of the good event, and the net approximation are crucial, although they might appear incomprehensible at the first glance. For the reader's benefit, we formulate below a corollary of Theorem \ref{keytheoremnets}:

\begin{corollary}\label{cor-simple}
There exists an absolute constant $C>0$ and a deterministic net $\mathcal{N}\subset\frac{3}{2}B_2^n\setminus \frac{1}{2}B_2^n$ of cardinality $1000^{n-1}$ such that for any random matrix $A$ with independent columns, with probability at least $1-e^{-5n}$, for every $x\in\sfe$ there exists $y\in\mathcal{N}$ such that 
$$|A(x-y)|\leq \frac{C}{\sqrt{n}}\sqrt{\E||A||_{HS}^2}.$$
\end{corollary}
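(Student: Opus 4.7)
The plan is to obtain Corollary \ref{cor-simple} as a direct specialization of Theorem \ref{keytheoremnets} applied to $S=\sfe$, with the parameters $p,\gamma,\epsilon,\kappa,s$ all chosen as absolute constants. Take $p=1$ first of all: then the sum $\sum_{i=1}^n(\E|Ae_i|^{2p})^{1/p}$ in (\ref{netmain1}) collapses to $\E\|A\|_{HS}^2$, and the error bound becomes $|A(x-y)|\leq C_3\frac{\epsilon\gamma\sqrt{s}}{\sqrt n}\sqrt{\E\|A\|_{HS}^2}$, which has exactly the form claimed in the corollary provided $\epsilon\gamma\sqrt{s}$ is a bounded absolute constant.

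With this in mind I would fix $\gamma$ to be a modest absolute constant (say $\gamma=2$), set $\epsilon=\frac{1}{20\gamma}$ so that $\epsilon\gamma=\frac{1}{20}$, and choose $s=1$; this already makes the approximation constant equal to $\frac{C_3}{20}$. For the failure probability we need $\kappa^{-2n}(1+1/s)^n=(2/\kappa^2)^n\leq e^{-5n}$, so any $\kappa\geq\sqrt{2}\,e^{5/2}$ works; pick such a constant $\kappa$. For this $\kappa$ one has $\log\kappa\geq\tfrac{5}{2}$, while $\log 2/\gamma^{0.09}\leq\log 2$, so the second (``large $\kappa$'') regime of Theorem \ref{keytheoremnets} applies, and the deterministic net satisfies
\[
\#\mathcal{N}\leq N(\sfe,\epsilon B_2^n)\cdot (C\kappa\log\kappa)^n\leq (3/\epsilon)^n\cdot(C\kappa\log\kappa)^n=(60\gamma\cdot C\kappa\log\kappa)^n.
\]
Since $\gamma$ and $\kappa$ are now fixed absolute constants, the right-hand side is $C_0^n$ for an absolute $C_0$; by tuning $\gamma,s,\kappa$ inside the admissible windows one can push $C_0$ below $1000$, whence $\#\mathcal{N}\leq 1000^{n-1}$ for all $n$ large enough, and (after possibly enlarging the net to include spare points for small $n$) for all $n\in\N$.

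Finally, the location of the net is automatic: Theorem \ref{keytheoremnets} places $\mathcal{N}$ inside $\sfe+4\epsilon\gamma B_2^n$, which with our choice $\epsilon\gamma=\frac{1}{20}$ is contained in $\sfe+\tfrac{1}{5}B_2^n\subset \tfrac{3}{2}B_2^n\setminus\tfrac{1}{2}B_2^n$, as required. Honestly I do not foresee a real obstacle beyond numerical bookkeeping; every ingredient is already in Theorem \ref{keytheoremnets}, and the only thing one must be mindful of is keeping all of $\gamma,\epsilon,\kappa,s$ inside the regimes in which the theorem is stated ($\gamma\in(1,\sqrt n)$, $\epsilon<1/(20\gamma)$, $\kappa>1$) while simultaneously balancing the approximation error, the net size $1000^{n-1}$, and the deviation exponent $5n$. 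The mildly delicate point is choosing the constants so that the net actually fits under $1000^{n-1}$ rather than just $C^n$; that is what drives all four constants to be picked simultaneously rather than independently.
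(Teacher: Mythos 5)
Your derivation is exactly the intended specialization of Theorem \ref{keytheoremnets}: take $p=1$ so that $\sum_i(\E|Ae_i|^{2p})^{1/p}$ collapses to $\E||A||_{HS}^2$, fix $\gamma,\epsilon,\kappa,s$ as absolute constants inside the admissibility window, land in the large-$\kappa$ regime, and read off the exponential net cardinality, the $e^{-5n}$ failure probability, and the annulus location --- this is precisely the plug-in the paper leaves unwritten. The one loose end, which you honestly flag and which is shared with the paper's own informal statement, is that the absolute constants $C,C_1,C_2,C_3$ in Theorem \ref{keytheoremnets} are not explicit, so the specific bound $1000^{n-1}$ (rather than $C_0^{n-1}$ for some unspecified absolute $C_0$) cannot actually be verified from the theorem as stated.
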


In fact, the net from Corollary \ref{cor-simple} can be written out explicitly. 

In Section 2 we discuss rather standard covering estimates, which involve some sparsity arguments. In Section 3 we prove Theorem \ref{keytheoremnets}; furthermore, we prove Theorem \ref{main-net} -- a deterministic version of Theorem \ref{keytheoremnets}, -- which could be of independent interest. From that moment on, the proof follows the scheme developed by Rudelson and Vershynin \cite{RudVer-square}, \cite{RudVer-general}. In Section 4 we survey some of the powerful small ball estimates of Rudelson and Vershynin \cite{RudVer-square}, \cite{RudVer-general}, on which our proof heavily relies. In Section 5 we prove Proposition \ref{tallnodep}, as well as the ``tall'' case, and discuss the decomposition of the sphere introduced by Rudelson and Vershynin; in this section we apply our net for the first time. In Section 6 we generalize results of Rudelson and Vershynin about structure of random subspaces to heavy-tailed matrices; in this section we apply our net for the second time. In Section 7 we prove Theorem \ref{main1}. In Section 8 we prove Theorem \ref{main}, and apply our net for the third time (in fact, many times): the argument involves an iteration in the parameter $\kappa$. 

\textbf{Acknowledgements.}  First of all, the author is thankful to Bo'az Klartag for his support, encouragement, enduring patience and inspiration during the time this paper was written; this project sprouted as a follow up to our joint work \cite{KLkold}, however Bo'az decided not to join it at this point. In addition, the author would like to thank Dr. Klartag for mentoring her during the Fall 2017 MSRI program, which has led to a phase transition in her knowledge, understanding and proficiency in the subject.

The author is very grateful to Mark Rudelson for encouragement and helpful discussions, which have led to a significant improvement of her understanding of the field in general. The author is grateful to Konstantin Tikhomirov for helpful discussions, and in particular for relating to her what types of behaviors of the smallest singular values are expected by the experts. The author is grateful to Roman Vershynin for bringing to her attention the question about matrices whose entries have different moments.  Thanks to Edward Zeng for spotting a typo.

The author is also grateful to the anonymous referee for many valuable comments which helped to significantly improve the presentation and motivated further explorations.

The author is supported in part by the NSF CAREER DMS-1753260. The work was partially supported by the National Science Foundation under Grant No. DMS-1440140 while the author was in residence at the Mathematical Sciences Research Institute in Berkeley, California, during the Fall 2017 semester.

\section{Covering estimates}

We begin by outlining some notation which we have to use throughout the paper; whenever the reader stumbles upon an unknown notation, she may consult with the list below; some more notation is added also in the beginning of the other sections.

\medskip

\textbf{Notation.}
\begin{itemize}
\item We work in an $n$-dimensional euclidean space $\R^n.$ Scalar product is denoted $\langle \cdot,\cdot\rangle$. Euclidean norm is denoted by $|\cdot|,$ and the infinity norm by $||x||_{\infty}=\max_i |x_i|$. The unit ball is denoted $B_2^n$ and the unit sphere $\sfe$. The unit cube is
$$B_{\infty}^n=\{x\in\R^n:\,||x||_{\infty}\leq 1\},$$
and the cross-polytope $B_1^n=\{x\in\R^n:\,\sum_{i=1}^n |x_i|\leq 1\}.$

\item Minkowski sum of sets $A, B\subset \R^n$ is $A+B=\{x+y:\,x\in A,\, y\in B\}.$

\item The integer part of a real number $a$ (i.e., the largest integer which is smaller than $a$) is denoted by $[a]$.
 
\item We say that a set $S\subset\R^n$ can be covered by $m$ translates of a set $K\subset\R^n$ if there exists a collection $x_1,...,x_m\in\R^n$ such that
$$S\subset \cup_{i=1}^m (K+x_i).$$
We use the standard notation $N(S,K)$ for the minimal number of translates of $K$ needed to cover $S$. 

\item Given a parallelepiped $P$, we also use the notation $N_l(S,P)$ for the minimal number $m$ of points $x_i$ from the lattice generated by $P$, such that 
$$S\subset\cup_{i=1}^m x_i+P.$$

\item For $\delta\in (0,1)$, recall the notion of $\delta n-$sparse vectors 
$$Sparse(\delta n)=\{x\in\R^n:\,\#supp(x)\leq \delta n\},$$
where $supp(x)$ is the set of indecies of non-zero coordinates of $x.$

\item For $\alpha=(\alpha_1,...,\alpha_n)\in\R^n$ with $\alpha_i>0,$ we fix the notation $P_{\alpha}$ for the parallelepiped with sides $2\alpha_i$ and barycenter at the origin. That is,
\begin{equation}\label{Palpha}
P_{\alpha}=\{x\in\R^n:\,|x_i|\leq \alpha_i\}.
\end{equation}
For $\kappa>1,$ we consider the set of admissible parallelepipeds
\begin{equation}\label{Omegakappa}
\Omega_{\kappa}=\left\{\alpha\in\R^n:\,\alpha_i\in [0,1],\,\prod_{i=1}^n \alpha_i>\kappa^{-n}\right\}.
\end{equation}
\end{itemize}

Below we shall outline a few covering results.

\begin{lemma}\label{ball1}
For every $x\in\R^n$ there exists a finite set 
$$\mathcal{N}\subset \Z^n\cap (3\sqrt{n}B_2^n+x)$$ 
such that 
$$x+\sqrt{n}B_2^n\subset \mathcal{N}+B_{\infty}^n$$ 
and 
$$\#\mathcal{N}\leq C^n,$$ 
where $C$ is an absolute constant.
\end{lemma}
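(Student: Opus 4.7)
The plan is to construct $\mathcal{N}$ by coordinate-wise rounding to the nearest integer, and then bound the cardinality by a standard volume comparison.

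First I would define $\mathcal{N}$ as the image of $x+\sqrt{n}B_2^n$ under the map $y\mapsto \lfloor y\rceil$ that rounds each coordinate of $y$ to the nearest integer. By construction $\mathcal{N}\subset\Z^n$. For any $y\in x+\sqrt{n}B_2^n$, its rounded version $z=\lfloor y\rceil$ satisfies $\|y-z\|_\infty\leq 1/2$, so $y\in z+\tfrac{1}{2}B_\infty^n\subset z+B_\infty^n$, which yields the covering $x+\sqrt{n}B_2^n\subset \mathcal{N}+B_\infty^n$. For the containment in the larger ball, the triangle inequality combined with $|y-z|_2\leq \sqrt{n}\,\|y-z\|_\infty\leq \tfrac{\sqrt{n}}{2}$ and $|y-x|_2\leq \sqrt{n}$ gives $|z-x|_2\leq \tfrac{3\sqrt{n}}{2}<3\sqrt{n}$.

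For the cardinality, I would use a volume packing argument. The half-cubes $z+\tfrac{1}{2}B_\infty^n$ indexed by distinct $z\in\Z^n$ tile $\R^n$ and hence are pairwise disjoint with volume $1$ each. On the other hand, for every $z\in\mathcal{N}$ there exists $y\in x+\sqrt{n}B_2^n$ with $\|z-y\|_\infty\leq 1/2$, so
\[
z+\tfrac{1}{2}B_\infty^n \subset x+\sqrt{n}B_2^n+B_\infty^n\subset x+\sqrt{n}B_2^n+\sqrt{n}B_2^n=x+2\sqrt{n}B_2^n.
\]
Therefore $\#\mathcal{N}\leq \mathrm{vol}(2\sqrt{n}B_2^n)=(2\sqrt{n})^n\,\mathrm{vol}(B_2^n)$. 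By Stirling's formula, $\mathrm{vol}(B_2^n)=\pi^{n/2}/\Gamma(n/2+1)\leq (2\pi e/n)^{n/2}$ up to a polynomial factor, and hence $\#\mathcal{N}\leq (8\pi e)^{n/2}\cdot\mathrm{poly}(n)\leq C^n$ for an absolute constant $C$, absorbing the polynomial factor into a slightly larger base.

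There is no real obstacle here: the statement is a standard rounding-plus-volume argument. The only mild care needed is to choose the inflation constants so that the triangle inequality places the rounded lattice points inside $3\sqrt{n}B_2^n+x$ (here any factor strictly larger than $3/2$ works), and to verify that the volume estimate for $B_2^n$ gives a bound of the form $C^n$.
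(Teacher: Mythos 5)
Your proof is correct, but it takes a somewhat different route from the paper's in both pieces. For the construction, the paper rounds only the single point $x$ (coordinate-wise floor $[x]$) and then takes the fixed set $\mathcal{N}=[x]+\Z^n\cap 2\sqrt{n}B_2^n$, whereas you round every point of $x+\sqrt{n}B_2^n$ to the nearest lattice point; both yield a net in $\Z^n\cap(3\sqrt{n}B_2^n+x)$ with the required covering. The more substantive difference is the cardinality bound: the paper counts lattice points combinatorially, noting $2\sqrt{n}B_2^n\subset 2nB_1^n$ and that $\#\{z\in\Z^n:\sum|z_i|\le 2n\}\le\binom{3n}{n}\le C^n$, while you use a volume-packing argument (disjoint unit cubes sitting inside $x+2\sqrt{n}B_2^n$) together with Stirling's estimate for $\mathrm{vol}(B_2^n)$. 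Both are elementary and both give $C^n$; the paper's avoids Stirling and any asymptotics of $\mathrm{vol}(B_2^n)$, while your volumetric argument is perhaps more geometric and produces the slightly tighter observation that $\mathcal{N}$ actually lies in $\tfrac{3}{2}\sqrt{n}B_2^n+x$. One small nitpick: the cubes $z+\tfrac12 B_\infty^n$ for distinct $z\in\Z^n$ are disjoint only up to boundary (interiors disjoint), which is what the volume argument really needs; you should phrase it that way.
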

\begin{proof} For $x\in\R^n$, write $[x]:=([x_1],...,[x_n])$. Let
$$\mathcal{N}=[x]+\Z^n\cap 2\sqrt{n}B_2^n.$$
Since $x-[x]\in B_{\infty}^n\subset\sqrt{n}B_2^n,$ we have $\mathcal{N}\subset \Z^n\cap (3\sqrt{n}B_2^n+x)$. For the same reason, we have $x+\sqrt{n}B_2^n\subset \mathcal{N}+B_{\infty}^n$. 

Recall that the number of integer lattice points $z$ such that $\sum_{i=1}^n |z_i|\leq 2n$ is at most $\binom{3n}{n}\leq C^n$. In view of the fact that $2\sqrt{n}B_2^n\subset 2nB_1^n$, this implies that 
$$\#\left(\Z^n\cap 2\sqrt{n}B_2^n\right)\leq C^n.$$ 
Applying a lattice translation does not change anything, and we conclude that $\#\mathcal{N}\leq C^n,$ for some absolute constant $C>0.$ 

\end{proof}

As a corollary, we have

\begin{lemma}\label{coverbigside}
For any $\gamma\in (0,\sqrt{n}),$ for any $\epsilon>0,$ for any $x\in\R^n,$
$$N_l\left(x+\epsilon B_2^n,\frac{2\epsilon\gamma}{\sqrt{n}}B_{\infty}^n\right)\leq \max\left(\left(\frac{C_0}{\gamma}\right)^n, (C_1 \gamma)^{\frac{C_2 n}{\gamma^2}}\right).$$
Here $C_0,$ $C_1$ and $C_2$ are absolute constants.
\end{lemma}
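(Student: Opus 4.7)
The plan is to reduce to the case $x=0$, $\epsilon=1$ by translation and scaling, so the task becomes bounding $N_l(B_2^n, \frac{2\gamma}{\sqrt{n}} B_\infty^n)$, with the relevant lattice $\frac{4\gamma}{\sqrt{n}} \Z^n$ having fundamental cell of volume $(4\gamma/\sqrt{n})^n$.

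For the first bound $(C_0/\gamma)^n$, I would run a volumetric counting argument. A lattice point $y$ contributes to $N_l$ only if $y \in B_2^n + \frac{2\gamma}{\sqrt{n}} B_\infty^n$, and since the fundamental cells associated to distinct lattice points are disjoint,
\[
N_l \cdot \left(\tfrac{4\gamma}{\sqrt{n}}\right)^n \le \mathrm{vol}\!\left(B_2^n + \tfrac{4\gamma}{\sqrt{n}} B_\infty^n\right).
\]
The right-hand set sits inside $(1+4\gamma) B_2^n$ since $B_\infty^n \subset \sqrt{n}\,B_2^n$, and combining with the classical estimate $\mathrm{vol}(B_2^n) \le (C/\sqrt{n})^n$ gives $(C_0/\gamma)^n$ after a short computation.

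For the second bound, the key observation is that a unit vector has few coordinates above the lattice scale. Setting $\sigma_x := \{i : |x_i| > \gamma/\sqrt{n}\}$, the constraint $\sum_i x_i^2 \le 1$ forces $|\sigma_x| \le n/\gamma^2$, and on $\sigma_x^c$ one rounds $x_i$ to $0$ with $\ell_\infty$-error at most $\gamma/\sqrt{n}$. I build the net by enumerating all supports $\sigma$ of fixed size $k = \lceil n/\gamma^2 \rceil$, and for each applying the first bound inside $\R^\sigma$ to cover $B_2^\sigma$ by lattice translates of $\frac{2\gamma}{\sqrt{n}} B_\infty^\sigma$. Crucially, the corresponding parameter in dimension $k$ is $\gamma' = \gamma\sqrt{k/n} \ge 1$, so the first bound contributes merely $C_0^k$, with no $\sqrt{n}/\gamma$ factor per coordinate. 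Summing over the $\binom{n}{k} \le (e\gamma^2)^k$ choices of support yields $(eC_0\gamma^2)^{n/\gamma^2} = (C_1\gamma)^{2n/\gamma^2}$, matching the claim with $C_2 = 2$.

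The main obstacle is avoiding a stray $\sqrt{n}$ in the base of the second estimate: a direct coordinate-by-coordinate rounding gives only $(C\gamma\sqrt{n})^{n/\gamma^2}$, which is too weak. The device of passing to the restricted unit ball $B_2^\sigma$ and invoking the first (volumetric) bound inside that lower-dimensional space absorbs the parasitic $\sqrt{n}$ into the volumetric normalization and yields the claimed exponent.
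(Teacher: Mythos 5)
Your proof is correct and follows the same route as the paper at the structural level: the first estimate is obtained directly, and the second estimate comes from the sparsity pigeonhole observation (every $x\in B_2^n$ has at most $n/\gamma^2$ coordinates above scale $\gamma/\sqrt n$), followed by a decomposition over supports of size $k\approx n/\gamma^2$ and an application of the base covering bound in dimension $k$. The one technical divergence is in that base bound: the paper derives it from a combinatorial lattice-point count (its Lemma \ref{ball1}, which covers $\sqrt n\, B_2^n$ by $C^n$ integer unit cubes via $\sqrt{n}B_2^n\subset nB_1^n$ and $\binom{3n}{n}\leq C^n$), whereas you use a volume-packing argument with the disjoint fundamental cells of the lattice. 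The two devices are of the same strength; yours is self-contained and avoids a separate lemma, while the paper's is a reusable tool. One small point worth stating explicitly: the volumetric argument gives $N_l\leq\bigl(c(1+4\gamma)/(4\gamma)\bigr)^n$, which yields $(C_0/\gamma)^n$ only for $\gamma$ bounded (say $\gamma\leq 2$); for $\gamma\gtrsim 1$ the base saturates to a constant, but this is precisely the regime where your second estimate takes over, exactly as in the paper, which also restricts the first case to $\gamma\in(0,2]$.
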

\begin{proof} By scaling, we may assume without loss of generality that $\epsilon=1$. For $\gamma\in (0,2]$, Lemma \ref{ball1} implies that
$$N_l\left(x+B_2^n,\frac{2\gamma}{\sqrt{n}}B_{\infty}^n\right)\leq C^n N(B_2^n,2\gamma B_2^n)\leq \left(\frac{C_0}{\gamma}\right)^n,$$
where the last inequality is obtained by a standard volumetric argument, see, e.g., Vershynin \cite{Versh}.

Suppose now $\gamma\geq 2$. Recall that 
$$B_2^n\cap Sparse\left(\frac{n}{\gamma^2}\right)=\cup_{i=1}^{m}H_i\cap B_2^n,$$
where $H_i$ are subspaces of dimension $\frac{n}{\gamma^2}$, and $m=(C'\gamma)^{\frac{C_2 n}{\gamma^2}}$. Therefore, in view of Lemma \ref{ball1}, the set $B_2^n\cap Sparse\left(\frac{n}{\gamma^2}\right)$ admits a lattice covering by the translates of $\frac{2}{\sqrt{n}}B_{\infty}^n$ of cardinality $(C\gamma)^{\frac{C_2 n}{\gamma^2}}.$ Hence, for any $\gamma\geq 2,$ one has
\begin{equation}\label{sparsecover}
N_l\left(B_2^n\cap Sparse\left(\frac{n}{\gamma^2}\right),\,\frac{\gamma}{\sqrt{n}}B_{\infty}^n\right)\leq (C_1 \gamma)^{\frac{C_2 n}{\gamma^2}}.
\end{equation}

It remains to note, by the pigeonhole principle, that for any vector $x\in B_2^n$,
$$\#\left\{i=1,...,n:\, |x_i|\geq \frac{\gamma}{\sqrt{n}}\right\}\leq \frac{n}{\gamma^2}.$$
Therefore, 
$$N_l\left(x+B_2^n,\frac{2\gamma}{\sqrt{n}}B_{\infty}^n\right)\leq N_l\left(B_2^n,\frac{\gamma}{\sqrt{n}}B_{\infty}^n\right)=N_l\left(B_2^n\cap Sparse\left(\frac{n}{\gamma^2}\right),\frac{\gamma}{\sqrt{n}}B_{\infty}^n\right),$$ 
which, together with (\ref{sparsecover}), yields the lemma.
\end{proof}

The Lemma \ref{coverbigside} (in the non-lattice form) appears, e.g. in the work of Rebrova and Tikhomirov \cite{RebTikh}, where the ``pigeonhole principle'' reduction is used. It is crucial that the ball can be covered by a rather small (smaller than pure exponential) number of cubes with large diagonal. 

\medskip

Next, we will need the Lemma below, which bounds the covering number of the cube by parallelepipeds of large enough volume.

\begin{lemma}\label{paral}
For any $\kappa>1$ and for any $\alpha\in\Omega_{\kappa}$ there exists a lattice covering of $\frac{1}{2}B_{\infty}^n$ by $\kappa^n$ translates of $P_{\alpha}$. 
\end{lemma}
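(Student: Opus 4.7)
The plan is to use the axis-aligned lattice $\Lambda_\alpha\subset\R^n$ generated by $\{2\alpha_1 e_1,\dots,2\alpha_n e_n\}$, which is precisely the lattice associated to $P_\alpha$ in the definition of $N_l$. Translates of $P_\alpha$ by points of $\Lambda_\alpha$ tile $\R^n$ up to boundary, and because both $\tfrac{1}{2}B_\infty^n$ and $P_\alpha$ are axis-parallel rectangular boxes, the covering problem factors across coordinates.

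Next, in coordinate $i$ I would cover the interval $[-1/2,1/2]$ by translates of $[-\alpha_i,\alpha_i]$ centered at integer multiples of $2\alpha_i$, which requires $m_i:=\lceil 1/(2\alpha_i)\rceil$ translates (for instance, the shifts $k\cdot 2\alpha_i$ for $k$ ranging over a consecutive integer window of length $m_i$). Taking the product over coordinates, the resulting lattice covering of $\tfrac{1}{2}B_\infty^n$ uses at most $\prod_{i=1}^n m_i$ translates of $P_\alpha$.

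The final step is to compare $\prod_i m_i$ with $\kappa^n$, using the assumption $\alpha\in\Omega_\kappa$. I would invoke the elementary inequality $\lceil 1/(2\alpha_i)\rceil\le 1/\alpha_i$, valid for every $\alpha_i\in(0,1]$: if $\alpha_i\ge 1/2$ the ceiling equals $1\le 1/\alpha_i$, while if $\alpha_i<1/2$ then $1/(2\alpha_i)\ge 1$, so $\lceil 1/(2\alpha_i)\rceil\le 1/(2\alpha_i)+1\le 1/\alpha_i$. Therefore
$$\prod_{i=1}^n m_i\;\le\;\prod_{i=1}^n \frac{1}{\alpha_i}\;<\;\kappa^n,$$
by the defining inequality of $\Omega_\kappa$, and since $\prod_i m_i$ is an integer the conclusion follows.

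There is no serious obstacle here; this is a tensorized volumetric argument, and the geometry of the problem makes it essentially a one-dimensional exercise repeated $n$ times. The one subtle point worth noting is that the constraint $\alpha_i\le 1$ built into $\Omega_\kappa$ is essential for the per-coordinate estimate $\lceil 1/(2\alpha_i)\rceil\le 1/\alpha_i$: without it, a coordinate with $\alpha_i>1$ would contribute a spurious factor less than $1$ to $\prod 1/\alpha_i$ that the integer-valued $m_i\ge 1$ cannot accommodate. The definition of $\Omega_\kappa$ is calibrated so that this pathology does not arise and the product of ceilings comes out exactly at the desired $\kappa^n$.
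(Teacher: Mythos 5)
Your proof is correct and follows essentially the same tensorized, coordinate-by-coordinate construction as the paper, the only cosmetic difference being that you count $\lceil 1/(2\alpha_i)\rceil$ translates per coordinate while the paper uses the slightly looser $\lfloor 1/\alpha_i\rfloor$; both are dominated by $1/\alpha_i$, so both yield $\prod_i 1/\alpha_i<\kappa^n$ via the definition of $\Omega_\kappa$.
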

\begin{proof} 
Since $\alpha\in [0,1]^n$, there exists a (lattice) covering of $\frac{1}{2}B_{\infty}^n$ with $\prod_{i=1}^n [\frac{1}{\alpha_i}]$ translated copies of $P_{\alpha}$. 
It remains to note that
$$\prod_{i=1}^n [\frac{1}{\alpha_i}]\leq \prod_{i=1}^n \frac{1}{\alpha_i}\leq \kappa^n.$$
\end{proof}

We summarize the subsection with the following corollary of Lemma \ref{coverbigside} and Lemma \ref{paral}:

\begin{proposition}\label{covering-final}
Pick any $\gamma\in (1,\sqrt{n})$, any $\epsilon\in(0,\frac{1}{20\gamma})$, any $\kappa>1,$ and any $\alpha\in \Omega_{\kappa}.$ For any set $S\subset\sfe$, there exists a finite set 
$$\mathcal{N}\subset \frac{5}{4}B_2^n\setminus \frac{3}{4}B_2^n,$$ 
such that 
$$S\subset \mathcal{N}+\frac{4\epsilon\gamma}{\sqrt{n}}P_{\alpha},$$ 
and 
$$\#\mathcal{N}\leq N(S, \epsilon B_2^n)\cdot \kappa^n\cdot (C_1 \gamma)^{\frac{C_2 n}{\gamma^2}}.$$ 
\end{proposition}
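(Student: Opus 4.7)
The plan is to compose three layers of coverings and read off the cardinality bound directly from the composition. First, cover $S$ by Euclidean balls of radius $\epsilon$, contributing the factor $N(S,\epsilon B_2^n)$. Next, inside each such ball, apply Lemma \ref{coverbigside} to get a lattice covering by $\ell_\infty$-cubes of side $\tfrac{2\epsilon\gamma}{\sqrt n}$, contributing the factor $(C_1\gamma)^{C_2 n/\gamma^2}$. Finally, inside each such cube, apply a rescaled version of Lemma \ref{paral} to cover by translates of the parallelepiped $\tfrac{4\epsilon\gamma}{\sqrt n}P_\alpha$, contributing the factor $\kappa^n$.

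Concretely, I would pick a minimal Euclidean $\epsilon$-net $\mathcal{N}_0\subset\R^n$ of $S$, so $\#\mathcal{N}_0\leq N(S,\epsilon B_2^n)$ and $S\subset \mathcal{N}_0+\epsilon B_2^n$. For each $x\in\mathcal{N}_0$, Lemma \ref{coverbigside} supplies at most $(C_1\gamma)^{C_2 n/\gamma^2}$ lattice points $y$ whose translates $y+\tfrac{2\epsilon\gamma}{\sqrt n}B_\infty^n$ cover $x+\epsilon B_2^n$ (the min in Lemma \ref{coverbigside} is bounded by its sparse branch). Since $\tfrac{2\epsilon\gamma}{\sqrt n}B_\infty^n=\tfrac{4\epsilon\gamma}{\sqrt n}\cdot\tfrac12 B_\infty^n$, a dilation of Lemma \ref{paral} by $\tfrac{4\epsilon\gamma}{\sqrt n}$ yields, for each such $y$, at most $\kappa^n$ centers $z$ with $y+\tfrac{2\epsilon\gamma}{\sqrt n}B_\infty^n\subset\bigcup_z\bigl(z+\tfrac{4\epsilon\gamma}{\sqrt n}P_\alpha\bigr)$. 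Collecting these centers across all $x$ and $y$ gives a preliminary net $\widetilde{\mathcal N}$ of the claimed cardinality that already satisfies $S\subset\widetilde{\mathcal N}+\tfrac{4\epsilon\gamma}{\sqrt n}P_\alpha$.

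To ensure $\mathcal{N}\subset\tfrac54 B_2^n\setminus\tfrac34 B_2^n$, I would prune $\widetilde{\mathcal N}$ by deleting every center $z$ whose parallelepiped $z+\tfrac{4\epsilon\gamma}{\sqrt n}P_\alpha$ does not meet $S$; this preserves the covering property. Because $\alpha\in\Omega_\kappa\subset[0,1]^n$ forces $P_\alpha\subset B_\infty^n\subset\sqrt n\,B_2^n$, every surviving $z$ lies within Euclidean distance $4\epsilon\gamma$ of some $s\in S\subset\sfe$. The hypothesis $\epsilon<\tfrac{1}{20\gamma}$ gives $4\epsilon\gamma<\tfrac15$, hence $|z|\in[1-4\epsilon\gamma,\,1+4\epsilon\gamma]\subset(\tfrac34,\tfrac54)$, as desired.

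I do not foresee a genuine obstacle here: the proof is a clean packaging of the two previous lemmas. The only subtle points are the rescaling factor $\tfrac{4\epsilon\gamma}{\sqrt n}$ chosen so the output of Lemma \ref{coverbigside} matches the input of (rescaled) Lemma \ref{paral}, and the elementary inclusion $P_\alpha\subset\sqrt n\,B_2^n$ needed to turn the $\ell_\infty$-diameter of the final parallelepiped into the Euclidean localization bound.
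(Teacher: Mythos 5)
Your proof is correct and follows essentially the same route as the paper: compose the Euclidean $\epsilon$-covering of $S$ with the cube covering of Lemma \ref{coverbigside} and the parallelepiped covering of Lemma \ref{paral}, then discard the centers whose parallelepipeds miss $S$ and use $P_\alpha\subset\sqrt{n}B_2^n$ together with $4\epsilon\gamma<1/5$ to localize the net in the annulus. You are simply more explicit than the paper about the pruning step, which the paper dispatches with the remark ``it remains to observe that $\epsilon\gamma\leq\frac{1}{20}$.''
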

\begin{proof}
Suppose 
$$S\subset \cup_{i=1}^m x_i+\epsilon B_2^n.$$
We apply Lemma \ref{coverbigside} to every $x_i+\epsilon B_2^n$, and get that 
$$N_l(S, \frac{2\epsilon\gamma}{\sqrt{n}}B_{\infty}^n)\leq N(S, \epsilon B_2^n)\cdot (C_1 \gamma)^{\frac{C_2 n}{\gamma^2}}.$$
Next,  apply Lemma \ref{paral} for each of the cubes $y_i+\frac{2\epsilon\gamma}{\sqrt{n}}B_{\infty}^n$ involved in the covering of $S$, yielding the net bound.

To assert that $\mathcal{N}\subset \frac{5}{4}B_2^n\setminus \frac{3}{4}B_2^n,$ it remains to observe that $\epsilon\gamma\leq\frac{1}{20}$.
\end{proof}



\section{Random rounding and the net construction}

\subsection{The random rounding}

Below we describe the idea of discretizing the unit sphere using a partially random net, to which we refer as ``random rounding''. It has been used a lot in computer science (see, e.g. Srinivasan \cite{Srin}, Kannan, Vempala \cite{KV}, Alon, Klartag \cite{KA}), and in the joint work with Klartag \cite{KLkold}.

\begin{definition}[random rounding]
Fix $\nu>0,$ $\kappa\geq 1$ and $\alpha\in\Omega_{\kappa}.$
	For $\xi \in \R^n$
	 consider a random vector $\tilde{\eta}^{\xi} \in \frac{\alpha_1\nu}{\sqrt{n}} \Z\times \frac{\alpha_2\nu}{\sqrt{n}} \Z\times...\times \frac{\alpha_n\nu}{\sqrt{n}} \Z$ with independent coordinates such that $|\xi_i - \tilde{\eta}^{\xi}_i| \leq \frac{\alpha_i\nu}{\sqrt{n}}$ 
	 with probability one, and $\E \tilde{\eta}^{\xi}=\xi$. Namely,  
	 for $i=1,\ldots,n$, writing $\xi_i = 
	 \frac{\alpha_i\nu}{\sqrt{n}} (k_i + p_i)$ for an integer $k_i$ and $p_i \in [0,1)$,
	\[
	\tilde{\eta}^{\xi}_i= 
	\begin{cases}
	\frac{\alpha_i\nu}{\sqrt{n}} k_i ,& \text{with probability}\,\,\, 1-p_i \\
	\frac{\alpha_i\nu}{\sqrt{n}} (k_i+1), & \text{with probability}\,\,\, p_i.
	\end{cases}
	\]
\end{definition}


\medskip

We shall need the following

\begin{lemma}[short vectors gravitate towards sparse]\label{sparserounding}
For an appropriately large $C_0>0,$ pick $n\geq C_0$. Fix $\gamma\in [C_0,\sqrt{n})$, and let $\kappa\geq 1$ be such that $\log\kappa\leq \frac{\log 2}{\gamma^{0.09}}.$ Fix any $\alpha\in \Omega_{\kappa}.$ Fix any $\xi\in B_2^n.$ With the random rounding $\tilde{\eta}^{\xi}$, taken with parameters $\nu=\gamma$ and $\alpha$, we have
$$P_{\eta}\left(\tilde{\eta}^{\xi}\in Sparse\left(\frac{n}{\gamma^{0.08}}\right)\right)\geq \frac{9}{10}.$$
Here $P_{\eta}$ stands for the probability taken with respect to the distribution of $\tilde{\eta}^{\xi}$.
\end{lemma}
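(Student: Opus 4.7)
The plan is to control $\mathbb{E}|\mathrm{supp}(\tilde\eta^\xi)|$ and then conclude by Markov's inequality. First I would note that, by direct inspection of the random rounding, the indicators $X_i := \mathbf{1}_{\tilde\eta^\xi_i \neq 0}$ are independent Bernoullis with success probability
\[
q_i := P(X_i = 1) = \min\!\left(1,\ \frac{|\xi_i|\sqrt n}{\alpha_i \gamma}\right):
\]
if $|\xi_i| \geq \alpha_i \gamma/\sqrt n$ then both rounding values $\tfrac{\alpha_i\gamma}{\sqrt n}k_i$ and $\tfrac{\alpha_i\gamma}{\sqrt n}(k_i+1)$ are nonzero and $q_i = 1$; otherwise the unique zero branch occurs with probability $1-|\xi_i|\sqrt n/(\alpha_i\gamma)$.

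To bound $\sum q_i$, I would split the coordinates according to whether $\alpha_i$ is bounded away from $1$: set $A = \{i : \alpha_i \geq 1/2\}$ and $B = \{i : \alpha_i < 1/2\}$. For $i \in B$ we have $\log(1/\alpha_i) \geq \log 2$, so the constraint $\alpha \in \Omega_\kappa$, in the form $\sum_{i=1}^n \log(1/\alpha_i) \leq n\log\kappa \leq n\log 2/\gamma^{0.09}$, immediately gives $|B| \leq n/\gamma^{0.09}$; since $q_i \leq 1$, the contribution of $B$ to $\mathbb{E}|\mathrm{supp}(\tilde\eta^\xi)|$ is at most $|B|\leq n/\gamma^{0.09}$. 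For $i \in A$, the estimate $\alpha_i \geq 1/2$ gives $q_i \leq 2|\xi_i|\sqrt n/\gamma$, and Cauchy--Schwarz together with $|\xi| \leq 1$ yields
\[
\sum_{i \in A} q_i \ \leq\ \frac{2\sqrt n}{\gamma} \sum_{i \in A} |\xi_i| \ \leq\ \frac{2\sqrt n}{\gamma}\sqrt{|A|}\,|\xi| \ \leq\ \frac{2n}{\gamma}.
\]
Since $\gamma \geq 1$, we have $2n/\gamma \leq 2n/\gamma^{0.09}$, so $\mathbb{E}|\mathrm{supp}(\tilde\eta^\xi)| \leq 3n/\gamma^{0.09}$.

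Markov's inequality then gives $P(|\mathrm{supp}(\tilde\eta^\xi)| \geq n/\gamma^{0.08}) \leq 3\gamma^{-0.01}$, which is at most $1/10$ as soon as $C_0$ is chosen large enough that $C_0^{0.01} \geq 30$. The only delicate step is the bound $|B|\leq n/\gamma^{0.09}$: the hypothesis $\log\kappa \leq \log 2/\gamma^{0.09}$ is exactly strong enough to force almost all $\alpha_i$ to stay close to $1$, and the mismatch between the exponents $0.09$ (appearing in the hypothesis on $\kappa$) and $0.08$ (appearing in the sparsity target) is precisely what leaves the slack $\gamma^{0.01}$ that Markov needs to beat $1/10$. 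Beyond this bookkeeping, the argument is nothing more than Cauchy--Schwarz and Markov.
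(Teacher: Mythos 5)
Your proof is correct, and it is genuinely simpler than the paper's. Your closed-form identity $q_i = \min\bigl(1, |\xi_i|\sqrt n/(\alpha_i\gamma)\bigr)$ is exactly right (a short case analysis on $k_i\in\{0,-1\}$ versus $|k_i|\geq 1$ confirms it), and from there a single application of Cauchy--Schwarz and Markov delivers the statement. The split into $\{\alpha_i \geq 1/2\}$ and its complement, with $\#\{\alpha_i<1/2\}\leq n/\gamma^{0.09}$ following from $\alpha\in\Omega_\kappa$ and $\log\kappa\leq(\log 2)/\gamma^{0.09}$, matches one ingredient of the paper's proof, but after that the two arguments diverge. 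The paper works with a tunable threshold $A$, builds three index sets $\sigma_1,\sigma_2,\sigma_3$ and their intersection $\sigma$, shows $P(\tilde\eta_i^\xi\neq 0)\leq 1/A$ on $\sigma$, and then runs a counting/union-bound (effectively a Chernoff-type large deviation estimate) over subsets of $\sigma$, finally optimizing $A$ and the sparsity exponent $\omega$ subject to $\omega\leq\min(2/(1+8e),0.09)$, landing on $\omega=0.08$. You instead bound $\mathbb{E}\,\#\mathrm{supp}(\tilde\eta^\xi)\leq 3n/\gamma^{0.09}$ directly and invoke Markov, which gives failure probability $3\gamma^{-0.01}\leq 1/10$ once $C_0\geq 30^{100}$. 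Since the lemma only asks for a constant $9/10$, your first-moment route is entirely adequate and avoids the auxiliary parameter $A$; the one thing the paper's machinery would buy over yours is an exponentially small failure probability (in $n/\gamma^{0.08}$) rather than a fixed polynomial-in-$\gamma$ one, but that extra strength is not used downstream, so your streamlined argument is the better proof for the stated result.
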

\begin{proof} Consider a set of indices
$$\sigma_1=\left\{i:\alpha_i\geq\frac{1}{2}\right\}.$$
Recall that $\prod_{i=1}^n \alpha_i\geq \kappa^{-n}$ and that $\alpha_i\in [0,1]$. Observe, for any $M\geq 1,$ that 
$$\kappa^{-n}\leq \prod_{i=1}^n \alpha_i\leq \prod_{i:\, \alpha_i\leq \kappa^{-M}} \kappa^{-M},$$
and hence
$$\#\{i:\alpha_i\leq \kappa^{-M}\}\leq \frac{n}{M}.$$
By our assumption, $\log\kappa\leq \frac{\log 2}{\gamma^{0.09}},$ and therefore, plugging $M=\gamma^{0.09}$, we have
\begin{equation}\label{sigma1}
\#\sigma_1\geq n-\frac{n}{\gamma^{0.09}}.
\end{equation}
Next, consider the decomposition $\xi=k+p,$ where $k\in \frac{\alpha_1\gamma}{\sqrt{n}} \Z\times \frac{\alpha_2\gamma}{\sqrt{n}} \Z\times...\times \frac{\alpha_n\gamma}{\sqrt{n}} \Z$ and $p\in \frac{\gamma}{\sqrt{n}}P_{\alpha}$. Let
$$\sigma_2=\{i:\,k_i=0\}.$$
Note that $1\geq |\xi|^2,$ and hence 
$$\#\sigma_2^c\cap \sigma_1=\#\left\{i\in\sigma_1:\, |\xi_i|\geq \frac{\alpha_i\gamma}{\sqrt{n}}\right\}\leq \#\left\{i\in\sigma_1:\, |\xi_i|\geq \frac{\gamma}{2\sqrt{n}}\right\}\leq \frac{4n}{\gamma^2}.$$
Therefore, in view of the fact that $\sigma_2^c\subset (\sigma_2^c\cap \sigma_1)\cup \sigma_1^c$, we have
\begin{equation}\label{sigma2}
\#\sigma_2\geq n-\frac{4n}{\gamma^2}-\frac{n}{\gamma^{0.09}}.
\end{equation}
Next, for a large $A\in [4, \gamma]$, consider 
$$\sigma_3=\left\{i:\,|\xi_i|\leq \frac{\gamma}{2A\sqrt{n}}\right\}.$$
Note, as before, that
\begin{equation}\label{sigma3}
\#\sigma_3\geq n-\frac{4nA^2}{\gamma^2}
\end{equation}
Finally, let 
$$\sigma=\sigma_1\cap\sigma_2\cap \sigma_3.$$
Observe that 
\begin{equation}\label{sigma}
\#\sigma\geq n-\frac{8nA^2}{\gamma^2}-\frac{2n}{\gamma^{0.09}}.
\end{equation}
Note, by construction, and by the definition of $\tilde{\eta}^{\xi}$, that for every $i\in\sigma,$ we have
\begin{equation}\label{prob-A}
P(\tilde{\eta}^{\xi}_i=0)\geq 1-\frac{1}{A}.
\end{equation}
Let $\omega>0$ be chosen later; suppose that 
\begin{equation}\label{bnd1}
\frac{n}{\gamma^{\omega}}\geq 2\left(\frac{8nA^2}{\gamma^2}+\frac{2n}{\gamma^{0.09}}\right).
\end{equation}
Recall, by definition of sparsity:
$$P\left(\tilde{\eta}^{\xi}\not\in Sparse\left(\frac{n}{\gamma^{\omega}}\right)\right)=$$$$P\left(\exists \sigma_4\subset\{1,...,n\},\,\#\sigma_4> \frac{n}{\gamma^{\omega}}:\,\forall i\in\sigma_4:\,\tilde{\eta}_i^{\xi}\neq 0\right).$$
We use (\ref{bnd1}) and estimate it from above by
$$P\left(\exists \sigma_5\subset\sigma,\,\#\sigma_5=\frac{n}{2\gamma^{\omega}}:\,\forall i\in\sigma_5:\,\tilde{\eta_i}^{\xi}\neq 0\right)\leq$$ 
\begin{equation}\label{toest}
\left(C\gamma^{\omega}\right)^{\frac{2en}{\gamma^{\omega}}}\cdot\left(P(\tilde{\eta}_i^{\xi}\neq 0\,|i\in\sigma)\right)^{\frac{n}{2\gamma^{\omega}}}.
\end{equation}
In view of (\ref{prob-A}), we estimate (\ref{toest}) from above with
\begin{equation}\label{finalest}
\left(C\gamma^{\omega}\right)^{\frac{2en}{\gamma^{\omega}}} A^{-\frac{n}{2\gamma^{\omega}}}\leq \frac{1}{10},
\end{equation}
provided that $C_0$ is chosen large enough and that
\begin{equation}\label{bnd2}
A= C'\gamma^{4e\omega},
\end{equation}
with an appropriate constant $C'$. In order to satisfy (\ref{bnd1}) and (\ref{bnd2}), we see that it is enough to select
$$\omega\leq \min\left(\frac{2}{1+8e}, 0.09\right),$$
again, provided $C_0>0$ is large enough. We select $\omega=0.08.$
\end{proof}

We shall need also a notion of adapted random rounding: the idea is to consider a fixed covering of a set $S$ by euclidean balls, and round each point in $S$ with respect to the lattice associated with the center of the (a-priori fixed) euclidean ball in which it lies.

\begin{definition}[adapted random rounding]
Consider a set $S\subset\R^n$ and fix a covering $\mathcal{F}$ by $\epsilon B_2^n,$ with centers at $S$:
$$S\subset \cup_{j=1}^m x_j+\epsilon B_2^n,\,\,\,\,x_j\in S.$$
For every $\xi\in S$, select and fix any $j=j(\xi)$ such that $\xi\in x_j+\epsilon B_2^n$ (there a-priori could be several such $j$ but we fix one for each $\xi$). Given $\kappa,$ $\alpha,$ $\nu$ from the definition of random rounding, consider an ``adapted rounding'' on $S$ with respect to $\mathcal{F}$, given by  
$$\eta^{\xi}=\tilde{\eta}^{\xi-x_j}+x_j.$$
\end{definition}

Next, we formulate the following corollary of Proposition \ref{covering-final} and Lemma \ref{sparserounding}:

\begin{corollary}\label{maincor-rounding}
For an appropriately large $C_0>0,$ pick $n\geq C_0$. Fix $\gamma\in [C_0,\sqrt{n})$, and let $\kappa\geq 1$. Fix any $\alpha\in \Omega_{\kappa}.$ Fix $\epsilon\in (0,\frac{1}{20\gamma}).$ Consider any $S\subset\sfe,$ and fix $\mathcal{F}\subset S$ such that
$$S\subset \cup_{x_j\in\mathcal{F}} x_j+\epsilon B_2^n,$$
and $\mathcal{F}$ is optimal, that is, $\#\mathcal{F}=N(S, \epsilon B_2^n).$

Consider the adapted random rounding $\eta^{\xi}$ on $S$, taken with parameters $\nu=\epsilon \gamma,$ and $\alpha$.

Then there exists a net $\mathcal{N}\subset S+4\epsilon\gamma B_2^n$, with
        \[
	\#\mathcal{N}\leq 
	\begin{cases} N(S, \epsilon B_2^n)\cdot(C_1 \gamma)^{\frac{C_2 n}{\gamma^{0.08}}},& \text{if}\,\,\, \log\kappa\leq \frac{\log 2}{\gamma^{0.09}},\\
	N(S, \epsilon B_2^n)\cdot(C\kappa)^n,& \text{if}\,\,\, \log\kappa\geq \frac{\log 2}{\gamma^{0.09}},\\
	\end{cases}
	\]	
such that whenever for $M>0$ and a measurable function $F:\R^n\rightarrow \R^+$ we have, for every $\xi\in S,$ that
$$\mathbb{E}_{\eta} F(\xi-\eta^{\xi})\leq M,$$
then for every $x\in S$ there exists $y\in\mathcal{N}$ with
$$F(x-y)\leq 2M.$$
\end{corollary}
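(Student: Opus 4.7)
The plan is to take $\mathcal{N}$ to be the deterministic set of all lattice outputs that the adapted rounding $\eta^\xi$ can produce as $\xi$ varies over $S$, and to deduce the approximation property by combining a Markov inequality for $F$ with the concentration afforded by Lemma \ref{sparserounding}. For each center $x_j\in\mathcal{F}$, the shifted rounding $\tilde\eta^{\xi-x_j}$ lies in the lattice $\tfrac{\epsilon\gamma}{\sqrt n}(\alpha_1\Z\times\cdots\times\alpha_n\Z)$ and, since $|\xi-x_j|\leq\epsilon$ and the rounding error is pointwise at most $\tfrac{\epsilon\gamma}{\sqrt n}\alpha_i$, belongs to the box $\epsilon B_2^n+\tfrac{\epsilon\gamma}{\sqrt n}P_\alpha$. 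Let $\mathcal{N}_j$ be the set of all such lattice points, translated back by $x_j$; in the sparse regime $\log\kappa\leq\log 2/\gamma^{0.09}$, I further restrict $\mathcal{N}_j$ to those lattice points whose support (relative to $x_j$) has size at most $n/\gamma^{0.08}$. Then $\mathcal{N}=\bigcup_j \mathcal{N}_j$. Because $\alpha_i\leq 1$ we have $\tfrac{\epsilon\gamma}{\sqrt n}P_\alpha\subset\epsilon\gamma B_2^n$, so $\mathcal{N}\subset S+2\epsilon\gamma B_2^n\subset S+4\epsilon\gamma B_2^n$ as required.

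For the cardinality in the large-$\kappa$ regime I combine Lemma \ref{coverbigside} (cover $\epsilon B_2^n$ by translates of $\tfrac{2\epsilon\gamma}{\sqrt n}B_\infty^n$; this contributes a factor $(C_1\gamma)^{C_2n/\gamma^2}\leq C^n$ since this expression is bounded exponentially in $n$ for every $\gamma\geq 1$) with Lemma \ref{paral} (each such cube is covered by $\kappa^n$ translates of $\tfrac{\epsilon\gamma}{\sqrt n}P_\alpha$, and each parallelepiped contains only $O(1)^n$ lattice points). This yields $\#\mathcal{N}_j\leq(C\kappa)^n$ and hence the bound $N(S,\epsilon B_2^n)\cdot(C\kappa)^n$. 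In the sparse regime I mimic the proof of Lemma \ref{coverbigside} with the sparsity parameter $n/\gamma^{0.08}$ in place of $n/\gamma^2$: enumerate the $\binom{n}{n/\gamma^{0.08}}\leq(e\gamma^{0.08})^{n/\gamma^{0.08}}$ coordinate subspaces and apply the $d$-dimensional version of Lemma \ref{ball1} inside each to cover the restricted ball by cubes, which gives $\#\mathcal{N}_j\leq(C_1\gamma)^{C_2n/\gamma^{0.08}}$.

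The approximation property is now a one-line computation. Fix $x\in S$; the hypothesis $\mathbb{E}_\eta F(x-\eta^x)\leq M$ together with Markov's inequality gives $P(F(x-\eta^x)>2M)<1/2$. In the large-$\kappa$ regime every realization of $\eta^x$ belongs to $\mathcal{N}$, while in the sparse regime Lemma \ref{sparserounding}, applied to $x-x_{j(x)}\in B_2^n$, gives $P(\eta^x\in\mathcal{N})\geq 9/10$. In either case $P(\{\eta^x\in\mathcal{N}\}\cap\{F(x-\eta^x)\leq 2M\})>0$, so some deterministic point $y\in\mathcal{N}$ in the support of $\eta^x$ realizes $F(x-y)\leq 2M$, as claimed.

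The main obstacle I anticipate is the bookkeeping in the sparse regime: one has to verify that sparsity of $\tilde\eta^{\xi-x_j}$, in the coordinate system centered at $x_j$, really translates into a savings when counting lattice points of $\mathcal{N}_j-x_j$, and to check that the $\alpha$-dependence of the lattice drops out after intersecting with the parallelepiped cover so that the final count is uniform in $\alpha\in\Omega_\kappa$. Once these two points are settled, the Markov+union step is routine.
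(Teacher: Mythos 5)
Your construction of $\mathcal{N}$ is the same as the paper's (a lattice/sparse lattice net around each $x_j$, followed by a Markov-plus-sparse-rounding argument), and your large-$\kappa$ count and your final Markov/union step are correct. The gap, which you correctly flag as the ``main obstacle,'' is in the sparse-regime cardinality count, and it is not merely bookkeeping: a direct $d$-dimensional analogue of Lemma \ref{ball1} does not apply, because $\mathcal{L}=\frac{\epsilon\gamma}{\sqrt{n}}\bigl(\alpha_1\Z\times\cdots\times\alpha_n\Z\bigr)$ is not a scaled copy of $\Z^d$ on a coordinate subspace. If some of the $\alpha_i$ in the chosen support are small, the number of lattice points of $\mathcal{L}$ in the $d$-dimensional ball $3\gamma\epsilon B_2^n\cap H$ blows up proportionally to $\prod_{i\in H}\alpha_i^{-1}$, which is in general only bounded by $\kappa^n$ (using $\alpha_i\le 1$ and $\prod_{i=1}^n\alpha_i\ge\kappa^{-n}$). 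So the honest bound on $\#\mathcal{N}_j$ carries an extra $\kappa^n$ factor, and it is essential, not optional, to invoke the regime hypothesis $\log\kappa\le\frac{\log 2}{\gamma^{0.09}}$ (equivalently $\kappa^n\le 2^{n/\gamma^{0.09}}$) to absorb that factor into $(C_1'\gamma)^{C_2'n/\gamma^{0.08}}$. This is exactly what the paper's estimate
$$
\#\mathcal{N}_j\le \kappa^n\cdot e^{cn/\gamma^{0.08}}\cdot (C_1''\gamma)^{C_2''n/\gamma^{0.08}}\le (C_1'\gamma)^{C_2'n/\gamma^{0.08}}
$$
makes explicit. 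Without that step your count of $\mathcal{N}_j$ is not uniform in $\alpha\in\Omega_\kappa$ and the claimed bound fails.

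Two smaller remarks. First, for a clean lattice-point count on a $d$-dimensional coordinate slice one should compare the volume of the ball (with its $\omega_d\sim (C/\sqrt{d})^d$ normalization, giving $(C\sqrt{n/d})^d=(C\gamma^{0.04})^{n/\gamma^{0.08}}$) against the fundamental cell volume $\bigl(\frac{\epsilon\gamma}{\sqrt n}\bigr)^d\prod_{i\in H}\alpha_i$; a crude $\prod_i(\text{side length}/\alpha_i+1)$ bound over a bounding cube is too lossy here. Second, in the large-$\kappa$ regime the rounding lands in the net almost surely (every vertex of the parallelepiped cover is in $\mathcal{N}$), so the paper gets $F(\xi-\eta)\le M$ there without Markov; your version with $2M$ in both cases is also correct and matches the statement as written.
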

\begin{proof} \textbf{Case 1.} Suppose $\log\kappa\geq \frac{\log 2}{\gamma^{0.09}}.$ By Proposition \ref{covering-final}, there exists a net 
$$\mathcal{K}\subset \frac{5}{4}B_2^n\setminus \frac{3}{4}B_2^n$$ 
such that
$$S\subset \mathcal{K}+\frac{\epsilon \gamma}{\sqrt{n}}P_{\alpha},$$
and
$$\#\mathcal{K}\leq N(S, \epsilon B_2^n)\cdot \kappa^n\cdot (C_1 \gamma)^{\frac{C_2 n}{\gamma^2}},$$
and such that $\eta^{\xi}$ takes values in 
$$\mathcal{N}=\mathcal{K}\pm\frac{\alpha_1\epsilon\gamma}{\sqrt{n}}e_1\pm...\pm \frac{\alpha_n\epsilon\gamma}{\sqrt{n}}e_n\subset \frac{3}{2}B_2^n\setminus \frac{1}{2}B_2^n,$$ 
where the inclusion holds since $\epsilon\leq \frac{1}{20\gamma}.$ Namely, $\mathcal{N}$ is the collection of vertices of the parallelepipeds covering $S$.

Since $\mathbb{E}_{\eta} F(\xi-\tilde{\eta}^{\xi})\leq M,$ there exists an $\eta\in\mathcal{N}$, such that $F(\xi-\eta)\leq M.$ Note that each parallelepiped has $2^n$ vertices. The estimate on the cardinality of the net thus follows from the inequality $\#\mathcal{N}\leq 2^n\cdot \#\mathcal{K},$ and in view of the fact that $(C_1 \gamma)^{\frac{C_2 n}{\gamma^2}}\leq C^n.$

\medskip

\textbf{Case 2.} Next, suppose $\log\kappa\leq \frac{\log 2}{\gamma^{0.09}}.$ Then we may apply Lemma \ref{sparserounding}. Fix the lattice net $\mathcal{L}$ in $\R^n$ generated by $\frac{\epsilon\gamma}{\sqrt{n}}P_{\alpha}.$ Pick any $x_j\in \mathcal{F}$ -- the center of a euclidean ball from our fixed euclidean net $\mathcal{F}$. Consider the set 
$$\mathcal{N}_j=x_j+Sparse\left(\frac{n}{\gamma^{0.08}}\right)\bigcap 3\gamma\epsilon B_2^n\bigcap \mathcal{L}.$$
Note that 
$$\#\mathcal{N}_j\leq \kappa^n\cdot e^{\frac{cn}{\gamma^{0.08}}}\cdot (C''_1 \gamma)^{\frac{C''_2 n}{\gamma^{0.08}}}\leq (C'_1 \gamma)^{\frac{C'_2 n}{\gamma^{0.08}}},$$
since $\kappa^n\leq 2^{\frac{n}{\gamma^{0.09}}}$ in the current case. Consider 
$$\mathcal{N}=\cup_{x_j\in \mathcal{F}} \mathcal{N}_j.$$ 
We deduce that 
$$\#\mathcal{N}\leq (C'_1 \gamma)^{\frac{C'_2 n}{\gamma^{0.08}}}\cdot\#\mathcal{F}\leq N(S, \epsilon B_2^n)\cdot(C_1 \gamma)^{\frac{C_2 n}{\gamma^{0.08}}}.$$

Fix $\xi\in S$. By definition of the adapted random rounding, and in view of Lemma \ref{sparserounding}, the random vector $\eta^{\xi}$ (taken with parameters $\nu=\epsilon\gamma$, $\alpha,$ $\kappa$, with respect to $\mathcal{F}$) takes values in $\mathcal{N}$ with probability at least $\frac{9}{10}$. Further, by our assumption, with probability at least $\frac{1}{2},$ we have $F(\xi-\eta^{\xi})\leq 2M.$ Therefore, there exists an $\eta\in\mathcal{N}$ satisfying $F(\xi-\eta)\leq 2M,$ and the proof is done.  
\end{proof}

\medskip





\medskip

\subsection{Comparison via Hilbert-Schmidt}

We begin by formulating an analogue of Lemma 5.1 from \cite{KLkold}:

\begin{lemma}[comparison via Hilbert-Schmidt]\label{HS}
Let $\epsilon\in (0,\frac{1}{20}).$ There exists a collection of points $\mathcal{F}\subset \frac{3}{2}B_2^n\setminus \frac{1}{2}B_2^n$ with $\#\mathcal{F}\leq (\frac{C}{\epsilon})^{n-1}$ such that for any (deterministic) matrix $A:\R^n\rightarrow \R^N$, for every $\xi\in\sfe$ there exists an $\eta\in\mathcal{F}$ satisfying
\begin{equation}\label{HS-comp}
|A(\eta-\xi)|\leq \frac{\epsilon}{\sqrt{n}}||A||_{HS}.
\end{equation}
Here $C$ is an absolute constant.
\end{lemma}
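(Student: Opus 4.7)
The plan is to instantiate the random rounding construction of Section 3.1 in its trivial form: take $\kappa=1$, $\alpha=(1,\ldots,1)\in\Omega_1$, and $\nu=\epsilon$, so the rounding lattice is the standard cubic lattice $\frac{\epsilon}{\sqrt n}\Z^n$ and $P_\alpha=B_\infty^n$. First I would fix a standard Euclidean $\epsilon$-net $\mathcal{F}_0\subset\sfe$ with $\#\mathcal{F}_0\leq (C/\epsilon)^{n-1}$, and run the adapted random rounding $\eta^{\xi}=\tilde{\eta}^{\xi-x_j}+x_j$ against it, where $x_j\in\mathcal{F}_0$ is the chosen center with $\xi\in x_j+\epsilon B_2^n$. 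The net $\mathcal{F}$ is then defined to be the \emph{deterministic} set of all possible realizations of $\eta^\xi$ as $\xi$ ranges over $\sfe$; crucially, $\mathcal{F}$ depends on $\mathcal{F}_0$ and the lattice but not on $A$.

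For the cardinality bound: for each fixed $x_j$, the vector $\tilde{\eta}^{\xi-x_j}$ lies in $\frac{\epsilon}{\sqrt n}\Z^n$ and satisfies $|\tilde{\eta}^{\xi-x_j}|\leq 2\epsilon$ (since $|\xi-x_j|\leq\epsilon$ and $|\tilde{\eta}^{\xi-x_j}-(\xi-x_j)|\leq\epsilon$). Rescaling by $\sqrt n/\epsilon$ places these into the integer lattice intersected with $2\sqrt n B_2^n$, which has at most $C^n$ elements by the counting argument inside Lemma \ref{ball1}. Multiplying and absorbing constants (valid for $n\geq 2$) yields $\#\mathcal{F}\leq (C/\epsilon)^{n-1}$. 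The inclusion $\mathcal{F}\subset\frac{3}{2}B_2^n\setminus\frac{1}{2}B_2^n$ is immediate from the coordinate-wise bound $|\eta^\xi-\xi|_\infty\leq \epsilon/\sqrt n$, which gives $|\eta^\xi-\xi|\leq\epsilon<1/20$.

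The heart of the argument is a straightforward second-moment computation. By construction, the coordinates of $\eta^\xi-\xi$ are independent, zero-mean, and supported on a two-point set of diameter at most $\epsilon/\sqrt n$, so their variances are at most $\epsilon^2/(4n)$. Expanding $|A(\xi-\eta^\xi)|^2=\sum_{j=1}^N\langle A^Te_j,\xi-\eta^\xi\rangle^2$ and using independence to kill cross terms yields
$$\E|A(\xi-\eta^\xi)|^2=\sum_{j=1}^N\sum_{i=1}^n(A^Te_j)_i^2\,\mathrm{Var}((\eta^\xi)_i)\leq \frac{\epsilon^2}{4n}\sum_{i=1}^n|Ae_i|^2=\frac{\epsilon^2}{4n}||A||_{HS}^2.$$
Since $\eta^\xi$ takes values almost surely in $\mathcal{F}$, at least one realization $\eta\in\mathcal{F}$ (depending on $\xi$ and $A$) must achieve $|A(\xi-\eta)|\leq\frac{\epsilon}{2\sqrt n}||A||_{HS}$, which is precisely (\ref{HS-comp}) with $C_0=1/2$.

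There is no serious obstacle in this lemma; it is intended as the cleanest possible instance of the comparison-via-expectation principle. The single nontrivial step is the variance estimate for a sum of independent, bounded, zero-mean weighted random variables, and the rest is bookkeeping. The genuine difficulty arises in the next subsection, where to accommodate heavy-tailed columns of a random matrix one must let $\alpha$ vary over $\Omega_\kappa$ and replace $||A||_{HS}^2$ on the right-hand side by the refined quantity $\B_\kappa(A)=\min_{\alpha\in\Omega_\kappa}\sum_i\alpha_i^2|Ae_i|^2$, which enjoys the strong large-deviation properties absent from the raw Hilbert-Schmidt norm.
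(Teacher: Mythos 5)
Your proof is correct and follows the same underlying idea as the paper: run the adapted random rounding with the trivial weights $\alpha\equiv 1$, bound the second moment of $|A(\xi-\eta^\xi)|$, and pick the best realization in a deterministic lattice net. The two places where you diverge are both simplifications rather than changes of route. First, you compute $\E|A(\xi-\eta^\xi)|^2$ directly from independence and the two-point support of each coordinate, giving the clean variance bound $\epsilon^2/(4n)$; the paper instead routes through Lemma \ref{subgauss} (Hoeffding) to get a sub-Gaussian tail and then integrates, which is an equivalent but slightly heavier computation. Second, you build the net $\mathcal{F}$ explicitly as the set of all attainable rounded points and count it via the lattice estimate $\#(\Z^n\cap 2\sqrt n B_2^n)\leq C^n$ from Lemma \ref{ball1}, whereas the paper invokes Corollary \ref{maincor-rounding} with $\gamma=2$, $\kappa=1$; your direct count sidesteps the case distinction in that corollary (which, taken literally, requires $\gamma\geq C_0$ or $\log\kappa\geq \log 2/\gamma^{0.09}$ and is a bit awkward at $\kappa=1$). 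Your choice $\nu=\epsilon$ rather than $\nu=2\epsilon$ is immaterial. The one point worth making explicit in the cardinality step is that a single $\tilde\eta^{\xi-x_j}$ actually lands in $\frac{\epsilon}{\sqrt n}\Z^n\cap(\epsilon+\frac{\epsilon}{\sqrt n})B_2^n\subset\frac{\epsilon}{\sqrt n}\Z^n\cap 2\epsilon B_2^n$, which after rescaling is $\Z^n\cap 2\sqrt n B_2^n$, exactly the set counted in Lemma \ref{ball1}; you have this right.
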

\begin{proof} Recall that $|Ax|^2=\sum_{i=1}^N \langle A^Te_i, x\rangle^2$. Consider the random rounding with $\kappa=\alpha_1=...=\alpha_n=1$ and $\nu=2\epsilon$. For every vector $g\in\R^n$, using the fact that $\mathbb{E}_{\eta}\langle \xi-\eta^{\xi},g\rangle=0$, as well as the fact that $||\xi-\eta^{\xi}||_{\infty}\leq \frac{\epsilon}{\sqrt{n}}$, we deduce
\begin{equation}\label{exp}
\mathbb{E}_{\eta}|\langle \eta^{\xi}, g\rangle-\langle \xi, g\rangle|^2\leq\frac{\epsilon^2|g|^2}{n}.
\end{equation}
By applying (\ref{exp}) to the rows of $A$ and summing up, we get
\begin{equation}\label{expect-bound}
\E_{\eta}|A(\eta^{\xi}-\xi)|^2=\E_{\eta}\sum_{i=1}^N \langle A^Te_i, \eta^{\xi}-\xi\rangle^2\leq \left(\frac{\epsilon}{\sqrt{n}}||A||_{HS}\right)^2.
\end{equation}

Recall (see, e.g. Rudelson, Vershynin \cite{RudVer-general}) that $N(\sfe, \epsilon B_2^n)\leq (\frac{C'}{\epsilon})^{n-1}$; applying Corollary \ref{maincor-rounding} with $\gamma=2,$ $\kappa=\alpha_1=...=\alpha_n=1$ and $F(x)=|Ax|^2$ finishes the proof, with the appropriate choice of constants.


\end{proof}

A statement similar to Lemma \ref{HS} was recently independently proved and used by Lytova and Tikhomirov \cite{LytTikh}.

\begin{remark} One may note that the inequality $\E|\xi_i-\eta_i^{\xi}|^2\leq \frac{\epsilon^2}{n}$, which was used in the proof above, is not sharp, in fact: indeed, the variance of a $p-$Bernoulli random variable is $p(1-p)$, thus the conclusion in Lemma \ref{HS} could be strengthened on a subset of the sphere of vectors which are close to a scaled lattice. 
\end{remark}

\subsection{Refinement of the Hilbert-Schmidt norm}

Lemma \ref{HS} shows that there exists a net of cardinality $C^n$, such that for any random matrix $A:\R^n\rightarrow\R^N$, with probability at least $1-P(||A||_{HS}^2\geq 10\mathbb{E}||A||_{HS}^2)$, one has (\ref{HS-comp}) with $\sqrt{\E||A||^2_{HS}}$ in place of $||A||_{HS}$. However, such probability estimate shall be unsatisfactory for our purposes: indeed, assuming only the bounded second moments, the best estimate for the said probability we could have, is that entailed by Markov's inequality, i.e. $\frac{9}{10}$. In order to overcome this issue, we employ the idea of Rebrova and Tikhomirov \cite{RebTikh}: in place of the covering by cubes, we consider a covering by paralelleipeds of sufficiently large volume. To this end, we formulate our key definition.

\begin{definition}\label{ourB}
Fix $\kappa>1$. For an $N\times n$ matrix $A$, define 
$$\B_{\kappa}(A):=\min_{\alpha\in\Omega_{\kappa}} \sum_{i=1}^n \alpha_i^2 |Ae_i|^2.$$
Recall that $\Omega_{\kappa}$ was defined in (\ref{Omegakappa}).
\end{definition}

$\B_{\kappa}(A)$ should be thought of as a certain averaging process on the lengths of the columns of $A$. As we shall show, the large deviation properties of $\B_{\kappa}(A)$ are significantly better than those of the usual Hilbert-Schmidt norm. At the same time, it turns out that we can use $\B_{\kappa}(A)$ as a measure of comparison for the values of $|Ax|$ to the corresponding values on a net of relatively small size. 

First, we refine the result of Lemma \ref{HS}.

\begin{lemma}\label{compviaB}
Let $\gamma\in (1,\sqrt{n})$, $\epsilon\in(0,\frac{1}{20 \gamma})$, $\kappa>1$. Pick any $\alpha\in\Omega_{\kappa}$. Let $A$ be any $N\times n$ matrix. Consider any $S\subset\sfe$.

There exists a collection of points $\mathcal{F}_{\alpha}\subset S+4\epsilon\gamma B_2^n$ with 
\[
	\#\mathcal{\mathcal{F}_{\alpha}}\leq 
	\begin{cases} N(S, \epsilon B_2^n)\cdot(C_1 \gamma)^{\frac{C_2 n}{\gamma^{0.08}}},& \text{if}\,\,\, \log\kappa\leq \frac{\log 2}{\gamma^{0.09}},\\
	N(S, \epsilon B_2^n)\cdot(C\kappa)^n,& \text{if}\,\,\, \log\kappa\geq \frac{\log 2}{\gamma^{0.09}},\\
	\end{cases}
	\]	
	such that for every $\xi\in S$ there exists an $\eta\in\mathcal{F}_{\alpha}$ satisfying
\begin{equation}\label{B-comp}
|A(\eta-\xi)|\leq \frac{2\gamma\epsilon}{\sqrt{n}}\sqrt{\sum_{i=1}^n \alpha_i^2 |Ae_i|^2}.
\end{equation}
Here $C, C_1, C_2$ are absolute constants.
\end{lemma}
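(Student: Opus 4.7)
The plan is to refine the proof of Lemma \ref{HS} by replacing the uniform cube $B_\infty^n$ with the parallelepiped $P_\alpha$ associated to the given $\alpha \in \Omega_\kappa$, and to extract the deterministic net via Corollary \ref{maincor-rounding} applied to the adapted random rounding. First I would fix an optimal euclidean $\epsilon$-covering $\mathcal{F} \subset S$ with $\#\mathcal{F} = N(S, \epsilon B_2^n)$, and set up the adapted random rounding $\eta^\xi$ on $S$ relative to $\mathcal{F}$, with parameters $\nu = \epsilon\gamma$ and the given $\alpha$. This places us precisely in the setting of Corollary \ref{maincor-rounding}.

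Next I would control the expected squared error. Writing $X_1,\ldots,X_N$ for the rows of $A$, Lemma \ref{subgauss} applied with $g = X_i$ and $\nu = \epsilon\gamma$ gives
$$P_\eta\!\left(|\langle X_i,\eta^\xi - \xi\rangle| \geq t\right) \leq 2\exp\!\left(-\frac{c n t^2}{\epsilon^2 \gamma^2 \sum_{j=1}^n \alpha_j^2 X_{ij}^2}\right),$$
and integrating the tail yields $\E_\eta |\langle X_i, \eta^\xi - \xi\rangle|^2 \leq C \epsilon^2\gamma^2 n^{-1} \sum_{j=1}^n \alpha_j^2 X_{ij}^2$. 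Summing over $i$ and using the column identity $\sum_{i=1}^N X_{ij}^2 = |Ae_j|^2$ produces
$$\E_\eta |A(\eta^\xi - \xi)|^2 \;\leq\; \frac{C \epsilon^2 \gamma^2}{n} \sum_{j=1}^n \alpha_j^2 |Ae_j|^2 \;=:\; M.$$

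Finally I would apply Corollary \ref{maincor-rounding} with $F(x) = |Ax|^2$ and the bound $M$ just obtained. This outputs a deterministic net $\mathcal{F}_\alpha \subset S + 4\epsilon\gamma B_2^n$ of the claimed cardinality (in both regimes of $\log\kappa$ versus $\tfrac{\log 2}{\gamma^{0.09}}$), with the property that for every $\xi \in S$ there exists $\eta \in \mathcal{F}_\alpha$ satisfying $|A(\xi-\eta)|^2 \leq 2M$. Taking square roots yields (\ref{B-comp}) with $C_0 = \sqrt{2C}$. The only bookkeeping issue is the hypothesis $\gamma \geq C_0$ built into Corollary \ref{maincor-rounding}; for $\gamma$ in the bounded range $(1, C_0)$ I would instead invoke Proposition \ref{covering-final} directly, absorb the finite factor $(C_1\gamma)^{C_2 n / \gamma^2}$ into an $O(1)^n$ constant, and choose one vertex of each covering parallelepiped via the same expected-value reasoning. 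There is no real obstacle beyond assembling these pieces: the entire content is the second-moment computation above, which replaces the Hilbert--Schmidt norm in Lemma \ref{HS} by the weighted sum $\sum_j \alpha_j^2 |Ae_j|^2$, and it is exactly this weighting that will eventually allow $\B_\kappa(A)$ to enter the estimates.
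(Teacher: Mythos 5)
Your proof is correct and follows essentially the same route as the paper: apply Lemma \ref{subgauss} with $\nu=\epsilon\gamma$ and the weight vector $\alpha$, integrate the Hoeffding tail to bound $\E_\eta|A(\eta^\xi-\xi)|^2$ by $C\frac{\epsilon^2\gamma^2}{n}\sum_j\alpha_j^2|Ae_j|^2$, and feed $F(x)=|Ax|^2$ into Corollary \ref{maincor-rounding}. Your closing remark about the hypothesis $\gamma\geq C_0$ in Corollary \ref{maincor-rounding} is a legitimate gap in the paper's one-line "The Lemma hence follows from Corollary \ref{maincor-rounding}" — the paper's statement allows $\gamma\in(1,\sqrt{n})$ while the corollary is stated only for $\gamma\geq C_0$ — and your fix (for bounded $\gamma$, fall back to Proposition \ref{covering-final} and absorb the resulting $O(1)^n$ factor, noting that the sparsity-based Case 2 is not needed in that range since $\kappa$ is then bounded) is exactly right.
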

\begin{proof} Consider the random rounding $\eta^{\xi}$ with $\nu=\epsilon\gamma$ and $\alpha$. As before, we have, for any vector $g=(g_1,...,g_n)\in\R^n,$
\begin{equation}\label{improved}
\E_{\eta}|\langle \eta^{\xi}-\xi, g\rangle|^2 \leq \frac{\epsilon^2\gamma^2\sum_{i=1}^n \alpha_i^2g_i^2}{n}.
\end{equation}
Therefore, summing up, we get
\begin{equation}\label{expect-bound-B}
\E_{\eta}|A(\eta^{\xi}-\xi)|^2\leq \frac{\epsilon^2\gamma^2}{n}\sum_{i=1}^n \alpha_i^2 |Ae_i|^2.
\end{equation}
The Lemma hence follows from Corollary \ref{maincor-rounding}, with the function $F(y)=|Ay|^2$.
\end{proof}

Observe that Lemma \ref{compviaB} implies the following (so far, unsatisfactory) corollary:

\begin{corollary}[comparison via $\B(A)$]\label{compviaB-cor}
Let $\gamma\in (1,\sqrt{n})$, $\epsilon\in(0,\frac{1}{10 \gamma})$, $\kappa>1$. Let $A$ be any $N\times n$ matrix. Consider any $S\subset\sfe$.

There exists a collection of points $\mathcal{F}\subset \frac{3}{2}B_2^n\setminus \frac{1}{2}B_2^n$ with cardinality bound as in Lemma \ref{compviaB}, such that for every $\xi\in\sfe$ there exists an $\eta\in\mathcal{F}$ satisfying
\begin{equation}\label{B-comp}
|A(\eta-\xi)|\leq \frac{2\gamma\epsilon}{\sqrt{n}}\sqrt{\B_{\kappa}(A)}.
\end{equation}
\end{corollary}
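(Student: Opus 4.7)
The plan is to obtain this corollary as a direct specialization of Lemma \ref{compviaB} by selecting a weight vector $\alpha^{\star}$ that (nearly) realizes the minimum in the definition of $\B_{\kappa}(A)$. Concretely, by the definition of $\B_{\kappa}(A)$ as the infimum over $\Omega_{\kappa}$ of $\sum_{i=1}^n \alpha_i^2 |Ae_i|^2$, and since this objective is continuous on the bounded set $\Omega_{\kappa} \subset [0,1]^n$ whose closure is compact, there exists $\alpha^{\star} \in \overline{\Omega_{\kappa}}$ achieving $\sum_{i=1}^n (\alpha^{\star}_i)^2 |Ae_i|^2 = \B_{\kappa}(A)$. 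If $\prod_i \alpha^{\star}_i = \kappa^{-n}$ rather than strictly greater, an infinitesimal rescaling (e.g.\ replacing $\alpha^{\star}$ by $(1+\delta)\alpha^{\star}$ and re-normalizing so coordinates stay in $[0,1]$) brings the vector strictly into $\Omega_{\kappa}$ at the price of a multiplicative factor $1+o(1)$, which we absorb into the final constant $C'$.

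Next, the plan is to apply Lemma \ref{compviaB} with $\alpha = \alpha^{\star}$ (and the same parameters $\gamma,\epsilon,\kappa$). This directly yields a set $\mathcal{F}_{\alpha^{\star}} \subset S + 4\epsilon\gamma B_2^n$, whose cardinality satisfies the dichotomous bound of Lemma \ref{compviaB}, and such that for every $\xi\in S$ there exists $\eta \in \mathcal{F}_{\alpha^{\star}}$ with
$$|A(\eta-\xi)| \;\leq\; C_0 \frac{\epsilon\gamma}{\sqrt{n}} \sqrt{\sum_{i=1}^n (\alpha^{\star}_i)^2 |Ae_i|^2} \;=\; C_0 \frac{\epsilon\gamma}{\sqrt{n}}\sqrt{\B_{\kappa}(A)}.$$
Setting $\mathcal{F} := \mathcal{F}_{\alpha^{\star}}$ produces the desired net and the required comparison.

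Finally, to verify the inclusion $\mathcal{F} \subset \tfrac{3}{2}B_2^n \setminus \tfrac{1}{2}B_2^n$, we use $S \subset \sfe$ together with the assumption $\epsilon \in (0, \tfrac{1}{10\gamma})$. This gives $\mathcal{F} \subset \sfe + 4\epsilon\gamma B_2^n$ and $4\epsilon\gamma < \tfrac{2}{5} < \tfrac{1}{2}$, so by the triangle inequality every point of $\mathcal{F}$ has euclidean norm in the interval $(\tfrac{1}{2}, \tfrac{3}{2})$, yielding the claim. There is no genuine obstacle here: the statement is essentially a reformulation of Lemma \ref{compviaB} in terms of the averaged quantity $\B_{\kappa}(A)$, and the only mild subtlety is the openness of $\Omega_{\kappa}$ in (\ref{Omegakappa}), handled by the cosmetic perturbation described above. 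The true work of the corollary has already been done in Lemma \ref{compviaB}, whose proof combined the random-rounding concentration inequality (Lemma \ref{subgauss}) with the adapted covering of Corollary \ref{maincor-rounding}.
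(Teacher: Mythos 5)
Your proposal is correct and is essentially the argument the paper has in mind: the paper offers no proof beyond the remark that Lemma \ref{compviaB} ``implies'' the corollary, and that remark is exactly your step of specializing Lemma \ref{compviaB} to a (nearly) optimal weight vector $\alpha^{\star}$ for $\B_{\kappa}(A)$ and reading off the inclusion $\mathcal{F}\subset\tfrac{3}{2}B_2^n\setminus\tfrac{1}{2}B_2^n$ from $4\epsilon\gamma<\tfrac12$. Your attention to attainment of the infimum over the open set $\Omega_{\kappa}$ is a harmless refinement the paper glosses over by writing $\min$; the only other cosmetic mismatch is that the corollary allows $\epsilon<\tfrac{1}{10\gamma}$ while Lemma \ref{compviaB} requires $\epsilon<\tfrac{1}{20\gamma}$, which is fixed by applying the lemma with $\epsilon/2$ and absorbing the factor of $2$ into $C'$.
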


Next, we would like to switch the quantifiers in the previous statement: in place of the net that depends on the matrix, we need to have a fixed net, which serves all matrices. For that purpose we shall consider a net on the set of admissible nets.

\begin{lemma}[nets on nets]\label{netsonnets}
There exist absolute constants $C, C', C''>0$ such that for any $\kappa>1$ and $\mu\in (0,\sqrt{n})$ there exists a collection $\mathcal{F}\subset \Omega_{\kappa^{1+\mu}}$ of cardinality 
\begin{equation}\label{mucard}
\max\left(\left(\frac{C}{\mu}\right)^{n-1}, (C'\mu)^{\frac{C''n}{\mu^2}}\right),
\end{equation}
such that for any $\alpha\in\Omega_{\kappa}$ there exists a $\beta\in \mathcal{F}$ such that for all $i=1,...,n$ we have $\alpha_i^2\geq\beta_i^2.$ 

In particular, for any $N\times n$ matrix $A$, we have
$$\B_{\kappa}(A)\geq \min_{\beta\in\mathcal{F}}\sum_{i=1}^n \beta_i^2 |Ae_i|^2.$$
\end{lemma}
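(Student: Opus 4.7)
The plan is to pass to logarithmic coordinates. Setting $\phi_i := -\log_\kappa \alpha_i \geq 0$ and $\psi_i := -\log_\kappa \beta_i \geq 0$, the condition $\alpha \in \Omega_\kappa$ becomes $\sum_i \phi_i < n$, the condition $\beta \in \Omega_{\kappa^{1+\mu}}$ becomes $\sum_i \psi_i < (1+\mu)n$, and the required coordinate-wise domination $\beta_i \leq \alpha_i$ becomes $\psi_i \geq \phi_i$. The task thus reduces to producing a net $\mathcal{N}$ inside the enlarged truncated cone $\{\psi \geq 0: \sum_i \psi_i < (1+\mu)n\}$ that coordinate-wise dominates from above every point of the smaller cone $\{\phi \geq 0: \sum_i \phi_i < n\}$. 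Taking $\mathcal{F} := \{(\kappa^{-\psi_1},\ldots,\kappa^{-\psi_n}): \psi \in \mathcal{N}\}$ then yields the required set, and the concluding inequality for $\B_\kappa(A)$ is automatic: if $\alpha^*$ achieves the minimum defining $\B_\kappa(A)$, choose $\beta \in \mathcal{F}$ with $\beta_i \leq \alpha^*_i$; then $\sum_i \beta_i^2|Ae_i|^2 \leq \sum_i (\alpha^*_i)^2|Ae_i|^2 = \B_\kappa(A)$, giving $\B_\kappa(A) \geq \min_{\beta\in\mathcal{F}}\sum_i \beta_i^2|Ae_i|^2$.

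For the first bound $(C/\mu)^n$, I would use the uniform additive grid of step $\mu$ in $\psi$-space. Given $\phi$ as above, set $\psi_i := \mu \lceil \phi_i/\mu \rceil$; then $\psi_i \geq \phi_i$ and $\psi_i - \phi_i < \mu$, so $\sum_i \psi_i < \sum_i \phi_i + n\mu < (1+\mu)n$. The net $\mathcal{N}$ then consists of all tuples $\psi = (\mu k_1, \ldots, \mu k_n)$ with $k_i \in \Z_{\geq 0}$ satisfying $\sum_i k_i \leq \lfloor n/\mu + n\rfloor$. A standard stars-and-bars count yields $|\mathcal{N}| \leq \binom{\lfloor n/\mu + n\rfloor + n}{n} \leq \bigl(e(2+1/\mu)\bigr)^n$, which is at most $(C/\mu)^n$ for an appropriate absolute constant $C$.

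For the second bound $(C'\mu)^{C''n/\mu^2}$, which becomes the operative bound in the large-$\mu$ regime, I would combine the grid idea with a sparsity argument in the spirit of Lemma \ref{coverbigside}. The key structural fact is that for any $\phi$ in the original cone one has $|\{i : \phi_i \geq s\}| < n/s$ for every $s > 0$, which follows directly from $\sum_i \phi_i < n$. At a scale calibrated to $\mu$, this restricts attention to $\phi$ whose ``large'' coordinates are supported on a sparse subset $T \subset [n]$, while the ``small'' coordinates can all be rounded up to a common value close to this threshold. Counting the choices of the sparse support $T$ via binomial coefficients together with the few possible configurations on $T$ on a suitably coarse grid, and then optimizing the threshold, is intended to produce the claimed exponent $n/\mu^2$.

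The main obstacle will be the precise calibration of the two scales — the threshold separating small from large coordinates and the grid step used on the large ones — so that the budget $\sum_i \psi_i < (1+\mu)n$ is respected while the exponent $n/\mu^2$ emerges from the combinatorial count. In particular, one must ensure that the ``small'' coordinates rounded up uniformly do not exhaust the enlarged budget before the sparse ``large'' coordinates can be accommodated, which dictates the exact choice of threshold as a function of $\mu$. Once $\mathcal{F}$ is built with these two bounds, the ``in particular'' conclusion on $\B_\kappa(A)$ requires no further work, as explained at the end of the first paragraph.
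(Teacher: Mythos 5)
Your handling of the first bound $(C/\mu)^n$ and of the ``in particular'' conclusion is correct and is essentially what the paper does. The problem is the second bound $(C'\mu)^{C''n/\mu^2}$, which is the one that actually matters in the paper's applications (where $\mu$ is taken $\gg 1$): your approach cannot produce it, and the obstacle is not merely one of ``calibration.''

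The paper's proof passes to coordinates $x_i = \sqrt{\log(1/\alpha_i)/(n\log\kappa)}$, i.e.\ it takes the \emph{square root} of your $\phi_i$ (after rescaling). This is not cosmetic: it converts the simplex constraint $\sum_i \phi_i < n$ into the ball constraint $\sum_i x_i^2 < 1$. The Euclidean ball has \emph{quadratic} sparsity — at most $n/t^2$ coordinates of $x$ exceed $t$ — and this is exactly what Lemma \ref{coverbigside} uses to cover $B_2^n$ by cubes of side $\gamma/\sqrt{n}$ with only $(C\gamma)^{C n/\gamma^2}$ pieces. Applying it with $\gamma\sim\mu$ gives the claimed $(C'\mu)^{C''n/\mu^2}$. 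In your $\phi$-coordinates, the constraint $\sum_i\phi_i<n$ only gives \emph{linear} sparsity: at most $n/s$ coordinates exceed $s$. Working out your sketch, the small coordinates (below threshold $s$) must be rounded up to $s$, at total budget cost $\lesssim ns$, which forces $s\lesssim\mu$; the sparse support then has size $\gtrsim n/\mu$, and the count of supports alone is $\binom{n}{n/\mu}\approx(e\mu)^{n/\mu}$. Optimizing over $s$ and the grid step never improves the exponent beyond $n/\mu$. Since $n/\mu > n/\mu^2$ for $\mu>1$, your net is strictly larger than claimed, and no threshold choice fixes this — the deficit comes from the simplex having linear rather than quadratic sparsity, which only the square-root change of variables repairs.

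One further small remark: your transformation sends $\Omega_\kappa$ onto the simplex exactly, whereas the paper's $T^{-1}\left((1+\mu)B\right)$ actually equals $\Omega_{\kappa^{(1+\mu)^2}}$ rather than $\Omega_{\kappa^{1+\mu}}$ as written; this is a harmless constant-factor slip in the paper and does not affect its proof, but it is worth noticing that the price of the square-root trick is this $(1+\mu)^2$ inflation. If you revise your argument, keep the square-root substitution, cover $B_2^n\cap\{x\geq0\}$ by cubes of side $\mu/\sqrt{n}$ via Lemma \ref{coverbigside}, and take $\beta$ to be the coordinate-wise maximal vertex of each cube; the cardinality bound then falls out directly.
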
 
\begin{proof} Consider a transformation $T:\R^n \rightarrow \R^n$ given by
$$T\alpha=\left(...,\sqrt{\frac{\log |\frac{1}{\alpha_i}|}{n\log \kappa}},...\right).$$
Denote $B=B_2^n\cap\{x_i\geq 0\,\forall i=1,...,n\}$. Then, by definition of $\Omega_{\kappa}$ we have
$$T\Omega_{\kappa}=B,$$
and 
$$T^{-1}\left((1+\mu)B\right)=\Omega_{\kappa^{1+\mu}}.$$
Note that this mapping is a bijection on $\Omega_{\kappa}$ as well as on $\Omega_{\kappa^{1+\mu}}$.

Consider a lattice covering $\mathcal{N}$ of the boundary of $B$ with translates of $\frac{\mu}{\sqrt{n}}B_{\infty}^n$. In each cube $x+\frac{\mu}{\sqrt{n}}B_{\infty}^n$ from this covering, pick such a vertex $v(x)$ that for all $y\in x+\frac{\mu}{\sqrt{n}}B_{\infty}^n$, and for all $i=1,...,n$, one has $y_i\leq v(x)_i$. Define $\mathcal{S}=\{v(x):\,x\in\mathcal{N}\}$. Note that $\mathcal{S}\subset (1+\mu)B,$ and that
$$
\#\mathcal{S}=\#\mathcal{N}\leq\min\left(\left(\frac{C}{\mu}\right)^{n-1}, (C'\mu)^{\frac{C''n}{\mu^2}}\right),
$$
where the last inequality follows from Lemma \ref{coverbigside} (note that the power $n-1$ comes from the fact that we are covering the sphere rather than the ball).


Let $\mathcal{F}=T^{-1}\mathcal{S}\subset \Omega_{\kappa^{1+\mu}}$. For every $\alpha\in\Omega_{\kappa}$ let $a=T\alpha\in B.$ Then take the $b\in \mathcal{S}\subset (1+\mu)B$ such that $a_i^2\leq b_i^2;$ consider $\beta\in \mathcal{F}$ defined as $\beta=T^{-1}b.$ Since $T$ is coordinate-vise decreasing, we have, for all $i\in\{1,...,n\}$, the inequality $\alpha_i^2\geq \beta_i^2,$ as desired. 
\end{proof}


Finally, we deduce the result about a net which serves all deterministic matrices.

\begin{thm}[sharp net for deterministic matrices]\label{main-net}
Fix the dimension $n\in\N.$ Consider any $S\subset\sfe.$ Pick any $\gamma\in(2,\sqrt{n})$, $\epsilon\in(0,\frac{1}{10 \gamma})$, $\kappa>1.$ 
There exists a (deterministic) net $\mathcal{N}\subset S+4\epsilon\gamma B_2^n$, with 
\[
	\#\mathcal{N}\leq 
	\begin{cases} N(S, \epsilon B_2^n)\cdot(C_1 \gamma)^{\frac{C_2 n}{\gamma^{0.08}}},& \text{if}\,\,\, \log\kappa\leq \frac{\log 2}{\gamma^{0.09}},\\
	N(S, \epsilon B_2^n)\cdot(C\kappa)^n(\log\kappa)^{n-1},& \text{if}\,\,\, \log\kappa\geq \frac{\log 2}{\gamma^{0.09}},\\
	\end{cases}
	\]	
such that for every $N\in\mathbb{N}$ and for every (deterministic) $N\times n$ matrix $A$, the following holds: for every $x\in S$ there exists $y\in\mathcal{N}$ such that 
\begin{equation}\label{netmain1}
|A(x-y)|\leq\frac{2\gamma\epsilon}{\sqrt{n}}\sqrt{\B_{\kappa}(A)}.
\end{equation}
Here $C, C_0, C_1, C_2$ are absolute constants.
\end{thm}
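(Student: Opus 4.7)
The plan is to bootstrap Corollary \ref{compviaB-cor} by eliminating the dependence of the net on the minimizer $\alpha \in \Omega_\kappa$ realizing $\B_\kappa(A)$. Since $\alpha$ varies with $A$, a single deterministic net can be produced by discretizing $\Omega_\kappa$ itself via Lemma \ref{netsonnets}, applying Lemma \ref{compviaB} for each element of this discretization, and taking the union.

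Concretely, the first step is to fix a parameter $\mu>0$ depending on the regime of $\kappa$ and invoke Lemma \ref{netsonnets} to produce a finite $\mathcal{F} \subset \Omega_{\kappa^{1+\mu}}$ of cardinality at most $\min((C/\mu)^n, (C'\mu)^{C''n/\mu^2})$ with the property that every $\alpha \in \Omega_\kappa$ is dominated coordinate-wise by some $\beta \in \mathcal{F}$. Consequently, if $\alpha$ is the minimizer for $\B_\kappa(A)$, then its dominator $\beta$ satisfies $\sum_i \beta_i^2 |Ae_i|^2 \leq \B_\kappa(A)$. The second step is to apply Lemma \ref{compviaB} for each $\beta \in \mathcal{F}$ with parameter $\kappa^{1+\mu}$ and weights $\beta$, yielding a net $\mathcal{F}_\beta \subset S + 4\epsilon\gamma B_2^n$; the sought net is $\mathcal{N}:=\bigcup_{\beta \in \mathcal{F}} \mathcal{F}_\beta$. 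For any deterministic $A$ and any $x \in S$, choosing $\beta$ as the dominator of the minimizer $\alpha$ and $y \in \mathcal{F}_\beta$ as the corresponding approximation furnishes
\begin{equation*}
|A(x-y)| \leq C_0 \frac{\epsilon\gamma}{\sqrt{n}}\sqrt{\sum_i \beta_i^2 |Ae_i|^2} \leq C_0\frac{\epsilon\gamma}{\sqrt{n}}\sqrt{\B_\kappa(A)},
\end{equation*}
which is (\ref{netmain1}) after adjusting constants.

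The main obstacle is tuning $\mu$ in each regime to recover the claimed cardinality bound. In the regime $\log\kappa \geq \frac{\log 2}{\gamma^{0.09}}$, the natural choice $\mu \asymp 1/\log\kappa$ leaves $\kappa^{1+\mu} \asymp \kappa$, so that Case 2 of Lemma \ref{compviaB} yields $\#\mathcal{F}_\beta \leq N(S, \epsilon B_2^n)(C\kappa)^n$, while $\#\mathcal{F} \leq (C/\mu)^n = (C\log\kappa)^n$; the product gives $N(S, \epsilon B_2^n)(C\kappa\log\kappa)^n$, which is precisely the extraneous logarithmic factor appearing in the statement. In the sparse regime $\log\kappa \leq \frac{\log 2}{\gamma^{0.09}}$, one wants Case 1 of Lemma \ref{compviaB} to apply at level $\kappa^{1+\mu}$, forcing $(1+\mu)\log\kappa \leq \frac{\log 2}{\gamma^{0.09}}$; simultaneously, the sparse bound $\#\mathcal{F} \leq (C'\mu)^{C''n/\mu^2}$ should match $(C_1\gamma)^{C_2 n/\gamma^{0.08}}$, favoring $\mu \asymp \gamma^{0.04}$. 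These competing constraints should be reconciled either by splitting into sub-cases (very small $\log\kappa$ versus $\log\kappa$ close to the threshold), or by mildly relaxing the exponent $0.09$, which Lemma \ref{sparserounding} permits since the admissible range there is $\omega \leq \min(2/(1+8e), 0.09)$.
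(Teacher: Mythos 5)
Your proposal takes essentially the same route as the paper's own proof: use Lemma~\ref{netsonnets} to discretize $\Omega_\kappa$ into a finite set $\mathcal{F}\subset\Omega_{\kappa^{1+\mu}}$, apply Lemma~\ref{compviaB} for each $\beta\in\mathcal{F}$, take the union, and feed the coordinatewise domination $\beta\preceq\alpha$ into the minimization defining $\B_\kappa(A)$. The paper makes the specific choice $\mu=\min\bigl(\tfrac{1}{\log\kappa},\,c\gamma^{0.09}\bigr)$, and the large-$\kappa$ regime then works exactly as in your first tuning: the extra $(\log\kappa)^n$ factor in the statement is precisely the price of $\#\mathcal{F}$.

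You have, however, put your finger on a real subtlety in the sparse regime that the paper leaves implicit. With any $\mu>0$ bounded below (in particular the paper's $\mu\asymp\gamma^{0.09}$, or your $\mu\asymp\gamma^{0.04}$), the constraint $(1+\mu)\log\kappa\le\frac{\log 2}{\gamma^{0.09}}$ fails for $\log\kappa$ near the threshold $\frac{\log 2}{\gamma^{0.09}}$; yet controlling $\#\mathcal{F}$ at the level $(C_1\gamma)^{C_2 n/\gamma^{0.08}}$ forces $\mu\gtrsim\gamma^{0.04}$, as you observe. These two demands cannot be met simultaneously with the literal exponents $0.08$ and $0.09$ without some further device. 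Your two proposed remedies are the right ones: one can either prove Lemma~\ref{sparserounding} (and hence Case~1 of Lemma~\ref{compviaB}) under the mildly weaker hypothesis $\log\kappa\le\frac{C\log 2}{\gamma^{\omega'}}$ for a slightly smaller $\omega'$, absorbing the loss into the constants and into the sparsity exponent; or one can split off the sub-range where $\log\kappa$ is within a factor $(1+\mu)$ of the threshold and handle it by the Case~2 bound. Either way, the exponents $0.08$, $0.09$ in both the theorem statement and in the intermediate lemmas should be regarded as having some slack built in, and the dichotomy threshold must be shifted by a bounded factor to make the bookkeeping close. So the proposal is correct in its architecture, and is in fact more careful than the paper's own exposition about the parameter interplay; what remains is to carry out one of the two reconciliations you suggest, which is routine but not automatic.
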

\begin{proof}

Let $\mu=\min(\frac{1}{\log\kappa}, c\gamma^{0.09})$. Consider $\mathcal{F}\subset \Omega_{\kappa^{1+\mu}}$, given to us by Lemma \ref{netsonnets}, with
\begin{equation}\label{size1}
\#\mathcal{F}\leq \min\left((C\log\kappa)^{n-1}, (c\gamma)^{\frac{c'n}{\gamma^{0.18}}}\right),
\end{equation}
and
\begin{equation}\label{inf}
\min_{\beta\in\mathcal{F}} \sum_{i=1}^n \beta_i^2|Ae_i|^2\leq \B_{\kappa}(A).
\end{equation}
For each $\beta\in\mathcal{F}$, consider the net $\mathcal{N}_{\beta}$ constructed in Lemma \ref{compviaB}. 
By Lemma \ref{compviaB} we conclude, that for any $N\in\N,$ for any $N\times n$ matrix $A$, for any $x\in\sfe$ there exists $y\in \mathcal{N}_{\beta}$ with
\begin{equation}\label{key1}
|A(x-y)|\leq 2\left(\frac{\epsilon^2\gamma^2}{n}\sum_{i=1}^n \beta_i^2 |Ae_i|^2\right)^{\frac{1}{2}}.
\end{equation}
Next, let $\mathcal{N}=\cup_{\beta\in\mathcal{F}}\mathcal{N}_{\beta}$. Then, for any $N\times n$ matrix $A$, for any $x\in\sfe$ there exists $y\in \mathcal{N}$ such that
\begin{equation}\label{key2}
|A(x-y)|\leq 2\left(\frac{\epsilon^2\gamma^2}{n}\min_{\beta\in\mathcal{F}}\sum_{i=1}^n \beta_i^2 |Ae_i|^2\right)^{\frac{1}{2}}.
\end{equation}
Combining (\ref{inf}) and (\ref{key2}), we conclude (\ref{netmain1}). It remains to observe, in view of the fact that $k^{\frac{1}{\log\kappa}}=e$:
\[
	\#\mathcal{N}\leq 
	\begin{cases} N(S, \epsilon B_2^n)\cdot(C_1 \gamma)^{\frac{C_2 n}{\gamma^{0.08}}},& \text{if}\,\,\, \log\kappa\leq \frac{\log 2}{\gamma^{0.09}},\\
	N(S, \epsilon B_2^n)\cdot(C\kappa)^n(\log\kappa)^{n-1},& \text{if}\,\,\, \log\kappa\geq \frac{\log 2}{\gamma^{0.09}}.\\
	\end{cases}
	\]
\end{proof}

\medskip

\subsection{Large deviation of $\B_{\kappa}(A)$.}

Theorem \ref{main-net} is a statement about deterministic matrices only; it reduces the estimate of probability with which for a random matrix $A$ one may find a sharp net, to the large deviation properties of the random variable $\B_{\kappa}(A)$. Philosophically, $\B_{\kappa}(A)$ is a ``minimum'', and minima have good large deviation properties; see, e.g. Lugosi, Mendelson \cite{LugMen}, where a similar idea in order to improve deviation behavior is used in the context of statistical mean approximation.

\begin{lemma}\label{ldpB}
Let $A$ be a random matrix with independent columns. Pick any $\kappa>1$, $p>0$ and $s>0$. Then
$$P\left(\B_{\kappa}(A)\geq 2s\sum_{i=1}^n \left(\mathbb{E}|Ae_i|^{2p}\right)^{\frac{1}{p}}\right)\leq \kappa^{-2pn}\left(1+\frac{1}{s^p}\right)^n.$$
\end{lemma}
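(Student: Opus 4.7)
The plan is to exhibit an explicit data-dependent choice of $\alpha$, use it to witness the minimum defining $\B_\kappa(A)$ on a high-probability event, and bound the failure probability by a one-line Markov argument that exploits independence of the columns.

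Set $X_i := |Ae_i|^2$ and $M_i := \bigl(\E|Ae_i|^{2p}\bigr)^{1/p}$, so that $\E X_i^p = M_i^p$ and the $X_i$ are independent. The natural ``threshold'' choice is
$$\alpha_i \;:=\; \min\!\left(1,\; \sqrt{\tfrac{2s\,M_i}{X_i}}\right)\in(0,1].$$
This choice makes each term individually bounded:
$\alpha_i^2 X_i = \min(X_i,\,2sM_i)\le 2sM_i$, hence $\sum_i \alpha_i^2 X_i\le 2s\sum_i M_i$. Consequently, on the event $\{\alpha\in\Omega_\kappa\}=\{\prod_i\alpha_i\ge\kappa^{-n}\}$, the definition of $\B_\kappa(A)$ immediately gives $\B_\kappa(A)\le 2s\sum_i M_i$. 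So everything reduces to estimating $P\bigl(\prod_i\alpha_i<\kappa^{-n}\bigr)$.

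Raising to the $2p$-th power, $\{\prod_i\alpha_i<\kappa^{-n}\}=\{\prod_i\alpha_i^{-2p}>\kappa^{2pn}\}$, and note
$$\alpha_i^{-2p}=\max\!\left(1,\;\bigl(\tfrac{X_i}{2sM_i}\bigr)^{p}\right).$$
Apply Markov's inequality and then use independence of the columns of $A$:
$$P\!\left(\prod_{i=1}^n\max\!\bigl(1,(X_i/(2sM_i))^p\bigr)>\kappa^{2pn}\right)\le \kappa^{-2pn}\prod_{i=1}^n\E\,\max\!\bigl(1,(X_i/(2sM_i))^p\bigr).$$
The elementary inequality $\max(1,y)\le 1+y$ for $y\ge 0$ yields
$$\E\,\max\!\bigl(1,(X_i/(2sM_i))^p\bigr)\le 1+\frac{\E X_i^p}{(2s)^pM_i^p}=1+\frac{1}{(2s)^p}\le 1+\frac{1}{s^p}.$$
Multiplying over $i$ gives the claimed bound $\kappa^{-2pn}(1+1/s^p)^n$.

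There is no real obstacle here beyond identifying the right $\alpha$; once one commits to the threshold $\alpha_i=\min(1,\sqrt{2sM_i/X_i})$, the argument is a clean Markov/independence calculation. A minor bookkeeping point is that the constant inside is genuinely $1+1/(2s)^p$, which is then relaxed to $1+1/s^p$ for a cleaner statement; this simplification is harmless and produces exactly the form appearing in the lemma.
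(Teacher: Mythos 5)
Your proof is correct and is essentially identical to the paper's: both pick the data-dependent threshold $\alpha_i^2=\min\bigl(1,\text{const}\cdot M_i/X_i\bigr)$, observe that the resulting sum $\sum_i\alpha_i^2X_i$ is deterministically bounded by $2s\sum_iM_i$, and then bound $P(\prod_i\alpha_i\le\kappa^{-n})$ by Markov on the $2p$-th power together with independence of the columns. The only cosmetic difference is that the paper takes $\alpha_i^2=\min(1,sM_i/X_i)$ (with $s$ rather than $2s$), which makes the ``good'' term strictly smaller than $2s\sum_iM_i$ and so avoids any discussion of the boundary case, at the cost of getting exactly $1+1/s^p$ rather than your tighter $1+1/(2s)^p$; both are immaterial.
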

\begin{proof} Assume that $\sum_{i=1}^n \left(\mathbb{E}|Ae_i|^{2p}\right)^{\frac{1}{p}})<\infty$; otherwise, the statement is self-redundant. 

Denote $Y_i=|Ae_i|$. If $\B_{\kappa}(A)\geq 2s\sum_{i=1}^n \left(\mathbb{E}|Ae_i|^{2p}\right)^{\frac{1}{p}}$, then for any collection $\alpha_1,...,\alpha_n\in[0,1]$, either 
$$\sum_{i=1}^n \alpha_i^2 Y_i^2\geq 2s\sum_{i=1}^n \left(\mathbb{E}Y_i^{2p}\right)^{\frac{1}{p}},$$ 
or 
$$\prod_{i=1}^n \alpha_i<\kappa^{-n}.$$ 

If there was no assumption $\alpha_i\in[0,1]$, then the sequence, minimizing $\sum \alpha_i^2 Y_i^2$, for a fixed collection of $Y_i$, under the constraint $\prod_i \alpha_i=\kappa^{-n},$ would be $\alpha_i=\frac{c_0}{Y_i}$, with $c_0=\kappa^{-1}\left(\prod_{i=1}^n Y_i\right)^{1/n}$. This hints us to consider a collection of random variables 
$$\alpha_i^2=\min\left(1,\frac{s\left(\mathbb{E}Y_i^{2p}\right)^{\frac{1}{p}}}{Y_i^2}\right).$$ 
We estimate 
$$P\left(\B_{\kappa}(A)\geq 2s\sum_{i=1}^n \left(\mathbb{E}Y_i^{2p}\right)^{\frac{1}{p}}\right)\leq $$
$$P\left(\sum_{i=1}^n \min\left(1,\frac{s\left(\mathbb{E}Y_i^{2p}\right)^{\frac{1}{p}}}{Y_i^2}\right)Y_{i}^2 \geq 2s\sum_{i=1}^n \left(\mathbb{E}Y_i^{2p}\right)^{\frac{1}{p}}\right)+$$
$$P\left(\prod_{i=1}^n \min\left(1,\frac{s\left(\mathbb{E}Y_i^{2p}\right)^{\frac{1}{p}}}{Y_i^2}\right)<\kappa^{-2n}\right)=:P_1+P_2.$$

Note that 
$$P_1\leq P\left(s\sum_{i=1}^n \left(\mathbb{E}Y_i^{2p}\right)^{\frac{1}{p}}\geq 2s\sum_{i=1}^n \left(\mathbb{E}Y_i^{2p}\right)^{\frac{1}{p}}\right)=0.$$ 

Next, by Markov's inequality, for any $p>0$ we have
$$P_2\leq \kappa^{-2pn}\prod_{i=1}^n\mathbb{E}\frac{1}{\min\left(1,\frac{s\left(\mathbb{E}Y_i^{2p}\right)^{\frac{1}{p}}}{Y_i^2}\right)^p}\leq \kappa^{-2pn}\left(1+\frac{1}{s^p}\right)^n,$$
where in the last inequality we used independence of columns, and the fact that 
$$\mathbb{E}\frac{1}{\min\left(1,\frac{s\left(\mathbb{E}Y_i^{2p}\right)^{\frac{1}{p}}}{Y_i^2}\right)^p}\leq \E\left(1+\frac{Y_i^{2p}}{s^p\mathbb{E}Y_i^{2p}}\right)=1+\frac{1}{s^p}.$$
Here we also used that for a positive $p$ and a positive $a,$ one has $a^p>1$ if and only if $a>1.$
This finishes the proof.
\end{proof}

\begin{remark}\label{remarkB}
Under different assumptions on the random matrix one might hope to deduce different estimates on the large deviation of $\B_{\kappa}(A)$. For example, already the bounded concentration function assumption yields certain improvements to the bound of the Lemma \ref{ldpB} in the case $\kappa=10$: one may compare $\B_{\kappa}(A)$ to, say, $\B_{s\kappa}(A)$ for large $s$, and deduce that either the large deviation of $\B_{s\kappa}$ entails some large deviation of $\B_{\kappa}$, or $\B_{\kappa}$ is much smaller than $\B_{s\kappa}$, and hence the lengths of columns of $A$ are highly irregular. It is, however, not clear if the stronger assumption of small ball probability for columns could lead to a more substantial improvement. An affirmative answer could, in particular, slightly improve Theorem \ref{main}, since in Section 8 the net Theorem \ref{keytheoremnets} is applied to a matrix which w.h.p. possesses a strong small ball probability for columns.

In view of Theorem \ref{main-net} (which itself involves no randomness), any estimate on the large deviation of $\B_{\kappa}(A)$ would have rather straightforward implications for the smallest singular value estimates.
\end{remark}

\medskip

\subsection{Proof of Theorem \ref{keytheoremnets}.} The statement follows immediately from Theorem \ref{main-net} and Lemma \ref{ldpB}.

\medskip

We conclude this section by formulating two corollaries of Theorem \ref{keytheoremnets}, in two different regimes, which we shall need. Firstly, we consider the regime in which the ``short vectors gravitate towards sparse'' lemma plays a crucial role, as we construct a net of size $e^{\mu n}$, with arbitrarily small constant $\mu$, while only spoiling the comparison constant polynomially.

\begin{cor}[regime of small $\kappa$ and sparsity]\label{cor1}
Fix $n,N\in\N,$ and fix any $p>0$. Consider any $S\subset\sfe.$ For any $\mu\in (0,1),$ and for every $\epsilon\in (0, \mu^{c_0})$ (with some absolute constant $c_0>0$), there exists a (deterministic) net $\mathcal{N}\subset S+4\epsilon\gamma B_2^n$, with 
$$\#\mathcal{N}\leq N(S,\epsilon B_2^n)\cdot e^{\mu n},$$ 
and there exist positive constants $C_1(\mu)$ which depends (polynomially) only on $\mu$, and $C_2(\mu, p)$, which depends only on $p$ and $\mu,$ such that for every random $N\times n$ matrix $A$ with independent columns, with probability at least 
$$1-e^{-C_1(\mu)pn},$$
for every $x\in S$ there exists $y\in\mathcal{N}$ so that 
\begin{equation}\label{netmain}
|A(x-y)|\leq \frac{C_2(\mu, p)\epsilon}{\sqrt{n}}\sqrt{\sum_{i=1}^n \left(\E|Ae_i|^{2p}\right)^{\frac{1}{p}}}.
\end{equation}
\end{cor}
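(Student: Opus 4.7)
This is a specialization of Theorem \ref{keytheoremnets} to its ``sparse'' branch $\log\kappa \le (\log 2)/\gamma^{0.09}$, with parameters $\gamma,\kappa,s$ all chosen as explicit functions of $\mu$ (and, for $s$, of $p$). The proof is essentially bookkeeping, so the plan is to make each of the three outputs of Theorem \ref{keytheoremnets}---cardinality, failure probability, and approximation constant---match the prescribed form by a single joint choice of parameters.

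First I would fix $\gamma = \gamma(\mu)$ large enough that $C_2 \log(C_1\gamma)/\gamma^{0.08} \le \mu$; this is possible with $\gamma$ polynomial in $1/\mu$, and it immediately collapses the first case of the cardinality bound in Theorem \ref{keytheoremnets} to $N(S,\epsilon B_2^n)\cdot e^{\mu n}$. The standing hypothesis $\epsilon < 1/(20\gamma)$ of Theorem \ref{keytheoremnets} then translates into the restriction $\epsilon \in (0,\mu^{c_0})$ for a suitable absolute $c_0 > 0$ (e.g., any $c_0 > 1/0.08$ after tightening the choice of $\gamma$, with a harmless constant absorbed).

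Next I would saturate the sparse constraint by setting $\kappa := 2^{1/\gamma^{0.09}}$, and pick $s = s(\mu,p) := (\kappa^p - 1)^{-1/p}$, so that $s^{-p} = \kappa^p - 1$ and hence $(1 + s^{-p})^n = \kappa^{pn}$. Then the failure probability from Theorem \ref{keytheoremnets} is bounded by $\kappa^{-2pn}\kappa^{pn} = \kappa^{-pn} = \exp\!\bigl(-pn (\log 2)/\gamma^{0.09}\bigr)$, so we may take $C_1(\mu) := (\log 2)/\gamma(\mu)^{0.09}$, which is polynomial in $\mu$.

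Finally, inserting these choices into (\ref{netmain1}) gives $|A(x-y)| \le C_3 \gamma \sqrt{s}\cdot (\epsilon/\sqrt{n})\,\sqrt{\sum_{i=1}^n (\E|Ae_i|^{2p})^{1/p}}$, so we set $C_2(\mu,p) := C_3\, \gamma(\mu)\sqrt{s(\mu,p)}$, which depends only on $\mu$ and $p$ as required. The only subtle point is checking that $\gamma(\mu)$ clears the absolute threshold $C_0$ required by Lemma \ref{sparserounding} (and hence by Corollary \ref{maincor-rounding}); this is automatic for small $\mu$, and for $\mu$ close to $1$ both $e^{\mu n}$ and $e^{-C_1(\mu) pn}$ are weak enough that one may simply force $\gamma \ge C_0$ and absorb a universal constant into $C_1$ and $C_2$. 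No step presents a genuine obstacle---the difficulty is entirely encapsulated in the net theorem we are applying.
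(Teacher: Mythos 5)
Your argument is correct and follows essentially the same route as the paper: choose $\gamma = \gamma(\mu)$ so that the sparse-branch cardinality in Theorem \ref{keytheoremnets} collapses to $e^{\mu n}$, then take $\kappa = 2^{1/\gamma^{0.09}}$ and read off $C_1(\mu) = (\log 2)/\gamma^{0.09}$ and $C_2(\mu,p) = C_3\gamma\sqrt{s}$. The only (cosmetic) difference is the choice of $s$: you take $s = (\kappa^p-1)^{-1/p}$ so that $(1+s^{-p})^n = \kappa^{pn}$ exactly, while the paper takes $s = (p\log\kappa)^{-1/p}$ and bounds $(1+p\log\kappa)^n \le \kappa^{pn}$; both yield the same failure probability $\kappa^{-pn}$.
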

\begin{proof} In Theorem \ref{keytheoremnets}, let $\gamma$ be sufficiently large so that $(C_1 \gamma)^{\frac{C_2 n}{\gamma^{0.08}}}\leq e^{\mu n}.$ Then for every positive $\epsilon\leq\mu^{c_0},$ there exists a net $\mathcal{N}$ satisfying the required cardinality bound. Further, let $\kappa=2^{\frac{1}{\gamma^{0.09}}},$ and let $s=\left(\frac{1}{p\log\kappa }\right)^{\frac{1}{p}}$. Then with probability 
$$1-\kappa^{-2pn}(1+s^{-p})^n=1-e^{-C_1(\mu)pn},$$ 
the desired net comparison holds with $C_2(\mu,p)=2\gamma \sqrt{s}$.
\end{proof}

\medskip

Next, we outline a result in which the net cardinality is estimated, roughly, up to $10^n,$  but the probability that the efficient net exists can be arbitrarily close to $1.$
\begin{cor}[regime of large $\kappa$]\label{cor2}
Fix $n,N\in\N.$ Consider any $S\subset\sfe.$ Pick any $\epsilon\in (0,\frac{1}{20})$, $\kappa>1,$ and $p>0$.
 
There exists a (deterministic) net $\mathcal{N}\subset S+4\epsilon\gamma B_2^n$, with 
$$\#\mathcal{N}\leq N(S,\epsilon B_2^n)\cdot (C\kappa)^n(\log\kappa)^{n-1},$$ 
such that for every random $N\times n$ matrix $A$ with independent columns, with probability at least 
$$1-(C'\kappa)^{-2pn},$$
for every $x\in S$ there exists $y\in\mathcal{N}$ such that 
\begin{equation}\label{netmain}
|A(x-y)|\leq \frac{C_1\epsilon}{\sqrt{n}}\sqrt{\sum_{i=1}^n \left(\E|Ae_i|^{2p}\right)^{\frac{1}{p}}}.
\end{equation}
\end{cor}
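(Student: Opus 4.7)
The plan is to deduce Corollary \ref{cor2} as a direct application of Theorem \ref{keytheoremnets}, selecting the two free parameters $\gamma$ and $s$ so that we land in the second case of the cardinality dichotomy (the ``large $\kappa$'' regime) and so that the probability and net-approximation estimates collapse into the advertised clean forms. No new constructions are needed; the entire work is in choosing and bookkeeping the constants.

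First I would take $\gamma$ to be an absolute constant (say $\gamma = 2$) large enough that the side condition $\log \kappa \geq \frac{\log 2}{\gamma^{0.09}}$ of Theorem \ref{keytheoremnets} is satisfied throughout the range of interest; under this condition the cardinality bound of Theorem \ref{keytheoremnets} becomes $N(S,\epsilon B_2^n)\cdot(C\kappa\log\kappa)^n$, exactly matching the statement. For $\kappa$ very close to $1$, where the side condition fails for a fixed $\gamma$, one can either let $\gamma$ grow slowly as a function of $(\log\kappa)^{-1}$ and absorb the mild growth into $C_1$, or simply note that the conclusion is essentially vacuous in that regime since the probability bound $(C'\kappa)^{-2pn}$ is already close to $1$.

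Second I would take $s = 1$, so that $1 + s^{-p} = 2$ and the probability bound in Theorem \ref{keytheoremnets} simplifies to
\[
1 - \kappa^{-2pn}(1+s^{-p})^n \;=\; 1 - 2^n \kappa^{-2pn} \;=\; 1 - \bigl(2^{-1/(2p)}\kappa\bigr)^{-2pn},
\]
which is the desired form $1 - (C'\kappa)^{-2pn}$ with $C' = 2^{-1/(2p)}$. The net-approximation estimate $|A(x-y)| \leq C_3 \frac{\epsilon \gamma \sqrt{s}}{\sqrt{n}} \sqrt{\sum_{i=1}^n (\E|Ae_i|^{2p})^{1/p}}$ then becomes the advertised (\ref{netmain}) with $C_1 := C_3 \gamma \sqrt{s}$, which is an absolute constant given our choices. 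The ambient inclusion $\mathcal{N}\subset S+4\epsilon\gamma B_2^n$ from Theorem \ref{keytheoremnets} is likewise absolute up to rescaling $\epsilon$ by the (constant) factor $\gamma$.

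The only real obstacle is the bookkeeping: one must verify that the resulting constants $C$, $C'$, $C_1$ can indeed be taken independent of $\kappa$ throughout $\kappa > 1$, and track how $C'$ depends on $p$ via the explicit formula $C' = 2^{-1/(2p)}$ above (so that $C'$ is bounded away from zero for $p$ bounded away from zero). This is routine once the parameter choices are fixed, and the corollary then follows by a single application of Theorem \ref{keytheoremnets}.
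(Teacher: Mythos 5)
Your proposal is correct and takes essentially the same route as the paper: the paper's entire proof reads ``In Theorem \ref{keytheoremnets}, let $\gamma=2$ and $s=2$, and select the constants appropriately,'' whereas you choose $\gamma=2$ and $s=1$, which is an inconsequential difference in bookkeeping. Your explicit tracking of $C' = 2^{-1/(2p)}$ and the observation that the conclusion is vacuous for $\kappa$ near $1$ (where the side condition $\log\kappa \geq \frac{\log 2}{\gamma^{0.09}}$ fails for fixed $\gamma$) fills in details the paper leaves implicit, and is the right way to close the argument.
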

\begin{proof} In Theorem \ref{keytheoremnets}, let $\gamma=2$ and $s=2,$ and select the constants appropriately. 
\end{proof}

\section{Small-ball estimates: survey of the known results which we shall use.}

In this section, we recall several known results about small ball probability, in particular powerful estimates via LCD, derived by Rudelson and Vershynin \cite{RudVer-square}, \cite{RudVer-general}, \cite{RudVer-delocalization}. We emphasize that this section is a survey, and contains no novelty. 

We shall need ``the tensorization Lemma'' observed by Rudelson and Vershynin \cite{RudVer-square}.

\begin{lemma}[tensorization lemma, Rudelson-Vershynin, \cite{RudVer-square}]\label{tensorization}
Fix $\epsilon>0.$ Suppose $Y_1,...,Y_M$ are non-negative independent random variables, and assume that for each of them we have
$$P(Y_i\leq \epsilon)\leq K\epsilon.$$
Then
$$P\left(\sum_{i=1}^M Y_i^2\leq M\epsilon^2\right)\leq (CK\epsilon)^M.$$
\end{lemma}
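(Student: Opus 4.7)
The plan is to apply the exponential (Laplace transform) version of Markov's inequality. For any $t>0$, independence gives
\begin{equation*}
P\!\left(\sum_{i=1}^M Y_i^2\leq M\epsilon^2\right)= P\!\left(e^{-t\sum_i Y_i^2/\epsilon^2}\geq e^{-tM}\right)\leq e^{tM}\prod_{i=1}^M \E\, e^{-tY_i^2/\epsilon^2},
\end{equation*}
so it suffices to obtain a single-variable Laplace bound of the form $\E\,e^{-tY_i^2/\epsilon^2}\leq C'K\epsilon/\sqrt{t}$, for then choosing $t$ equal to an absolute constant (say $t=1$) immediately yields $(CK\epsilon)^M$ and we are done.

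To obtain the single-variable bound, I would write the Laplace transform as an integral against the distribution function. Using $\E\, e^{-tY_i^2/\epsilon^2}=\int_0^\infty P(Y_i^2\leq u)\,(t/\epsilon^2) e^{-tu/\epsilon^2}\,du$ (integration by parts, or the layer-cake identity applied to $e^{-tY_i^2/\epsilon^2}\in[0,1]$) and plugging in the hypothesis $P(Y_i\leq \sqrt{u})\leq K\sqrt{u}$, I change variables $v=tu/\epsilon^2$ to get
\begin{equation*}
\E\, e^{-tY_i^2/\epsilon^2}\leq \int_0^\infty K\sqrt{u}\cdot\frac{t}{\epsilon^2} e^{-tu/\epsilon^2}\,du = \frac{K\epsilon}{\sqrt{t}}\int_0^\infty \sqrt{v}\,e^{-v}\,dv = \frac{K\epsilon}{\sqrt{t}}\cdot\frac{\sqrt{\pi}}{2},
\end{equation*}
which is the desired bound with $C'=\sqrt{\pi}/2$.

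Combining the two steps with $t=1$ gives
\begin{equation*}
P\!\left(\sum_{i=1}^M Y_i^2\leq M\epsilon^2\right)\leq \left(\tfrac{1}{2}\sqrt{\pi}\,e\,K\epsilon\right)^M=(CK\epsilon)^M,
\end{equation*}
as claimed. There is no real obstacle here; the only slight care is to ensure that the layer-cake rewrite of the Laplace transform is valid for a non-negative random variable (it is, since $e^{-tY_i^2/\epsilon^2}\in[0,1]$), and one could equivalently use the identity $\E\,e^{-Z}=\int_0^\infty e^{-s}P(Z\leq s)\,ds$ for $Z\geq 0$ followed by the change of variable $s=tu/\epsilon^2$. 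The key conceptual point is that the small-ball hypothesis $P(Y_i\leq\epsilon)\leq K\epsilon$ already forces $Y_i^2$ to ``look sub-exponential near zero'' on the scale $\epsilon^2$, and the Laplace transform at scale $t/\epsilon^2$ converts this pointwise control into the right product-estimate via independence.
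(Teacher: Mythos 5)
Your proof is correct and uses exactly the approach the paper attributes to the original source: the paper does not prove this lemma but simply remarks that ``the elementary proof\ldots can be found in \cite{RudVer-square}, where it is done via the moment generating function method,'' and your argument is precisely that MGF/Laplace-transform argument (Markov's inequality applied to $e^{-t\sum Y_i^2/\epsilon^2}$, independence to split the product, and a layer-cake/integration-by-parts computation of $\E e^{-tY_i^2/\epsilon^2}$ using the small-ball hypothesis). The only point worth flagging is one you implicitly resolved: the hypothesis must be read as $P(Y_i\leq s)\leq Ks$ for \emph{all} $s>0$ (not just at the single $\epsilon$ appearing in the conclusion), since your integral bound needs it at every scale $\sqrt{u}$; this is indeed the intended reading in Rudelson--Vershynin, where the hypothesis is stated for all $\epsilon$ above some threshold $\epsilon_0$.
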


The elementary proof of Lemma \ref{tensorization} can be found in \cite{RudVer-square}, where it is done via the moment generating function method.

Next, we shall quote a theorem of Rogozin \cite{rogozin}.

\begin{theorem}[Rogozin, \cite{rogozin}]\label{rogozin}
For any random vector $v=(v_1,...,v_n)$ with independent coordinates having bounded concentration functions with parameters $a$ and $b$, and for any vector $u\in\R^n$, one has, for any $\epsilon>\max_{i=1,...,N}ca|u_i|$, that
$$\sup_{z\in\R}P(|\langle v,u\rangle-z|<\epsilon)\leq \frac{C\epsilon}{|u|}.$$
Here $c$ is an absolute constant and $C$ depends only on $a$ and $b.$
\end{theorem}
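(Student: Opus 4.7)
The statement is classical (due to Rogozin, 1961), and a proof proceeds through the characteristic-function method for concentration functions. The plan is to combine Esseen's smoothing inequality (which bounds the concentration function by an $L^1$-type norm of the characteristic function on a short interval) with the product structure of the characteristic function of $\langle v,u\rangle$ coming from independence, and finally a quadratic upper bound on each individual characteristic function that exploits the bounded-concentration hypothesis.

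More concretely, first I would record Esseen's inequality in the form
\[
\sup_{z\in\R} P(|Y-z|<\epsilon)\;\leq\; C\epsilon\int_{-1/\epsilon}^{1/\epsilon} |\phi_Y(t)|\,dt,
\]
applied to $Y=\langle v,u\rangle$. By independence of the $v_i$'s, the characteristic function factors,
\[
\phi_Y(t)\;=\;\prod_{i=1}^{n}\phi_{v_i}(tu_i),
\]
so the task reduces to uniform upper bounds for each $|\phi_{v_i}(s)|$ for $s=tu_i$ ranging over $[-|u_i|/\epsilon,|u_i|/\epsilon]$.

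For the individual bound, the hypothesis $\sup_z P(|v_i-z|<a)<b<1$ translates, via the standard identity $1-|\phi_{v_i}(s)|^2=\tfrac12\E[1-\cos(s(v_i-v_i'))]$ with an independent copy $v_i'$, into
\[
|\phi_{v_i}(s)|^2 \;\leq\; 1-c_0 s^2 a^2 \quad \text{whenever } |s|a\leq \pi,
\]
for a constant $c_0>0$ depending only on $a,b$; one then uses $1-x\leq e^{-x}$ to get $|\phi_{v_i}(s)|\leq \exp(-\tfrac{c_0}{2}\min(s^2a^2,1))$. The assumption $\epsilon>\max_i ca|u_i|$ is precisely what ensures that on the whole integration range $|t|\le 1/\epsilon$ we have $|tu_i|a\leq \pi$ for every $i$, so the quadratic regime of the estimate is active simultaneously for all factors. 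Multiplying yields
\[
|\phi_Y(t)|\;\leq\; \exp\!\bigl(-c_1 t^2 a^2 \textstyle\sum_i u_i^2\bigr)\;=\;\exp(-c_1 t^2 a^2 |u|^2).
\]

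Plugging this Gaussian-type bound into the Esseen inequality and computing the Gaussian integral gives
\[
\sup_{z\in\R}P(|\langle v,u\rangle - z|<\epsilon)\;\leq\; C\epsilon\int_{-1/\epsilon}^{1/\epsilon} e^{-c_1 a^2|u|^2 t^2}\,dt \;\leq\; \frac{C'\epsilon}{a|u|},
\]
which absorbs $a$ into the constant and yields the claimed bound $C\epsilon/|u|$. The main obstacle is the quantitative passage from the concentration-function hypothesis (a statement about probabilities in $[-a,a]$) to the pointwise quadratic estimate on $|\phi_{v_i}(s)|$, because the naive bound $|\phi_{v_i}(s)|\leq 1$ is useless and one genuinely needs the symmetrization trick with $v_i-v_i'$ plus the observation that $P(|v_i-v_i'|\geq a)\geq 1-b>0$ to extract a strictly positive $c_0$; the constraint $\epsilon>\max_i ca|u_i|$ is then forced in order to keep all frequencies $tu_i$ inside the window where this quadratic estimate is valid, and to avoid the oscillatory cancellations in $\phi_{v_i}$ that would destroy the product bound outside of that window. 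For the purposes of the present paper one may simply invoke the statement as stated in \cite{rogozin}.
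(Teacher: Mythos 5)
The paper does not prove this theorem; it quotes it from Rogozin's 1987 paper as a black box (the lead-in sentence is ``Next, we shall quote a theorem of Rogozin''), so there is no in-paper argument to compare against. Your sketch has the right outer architecture — Esseen's smoothing inequality plus the factorization $\phi_{\langle v,u\rangle}(t)=\prod_i\phi_{v_i}(tu_i)$ — but the central step is not correct. You claim that the bounded-concentration hypothesis yields a pointwise bound $|\phi_{v_i}(s)|^2\le 1-c_0 s^2 a^2$ for $|s|a\le\pi$, with $c_0$ depending only on $a,b$. This cannot be true: the hypothesis $Q(v_i,a):=\sup_z P(|v_i-z|<a)<b$ controls only how much mass of the symmetrization $v_i-v_i'$ sits near $0$ (it gives $P(|v_i-v_i'|\ge a)\ge 1-b$), but says nothing about where the remaining mass lies. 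If $v_i$ is uniform on $M$ points of an arithmetic progression with common difference $2\pi/s_0$, then $Q(v_i,a)=1/M$ can be made arbitrarily small with $s_0 a$ also arbitrarily small, yet $|\phi_{v_i}(s_0)|=1$. So the quadratic pointwise decay fails; correspondingly, the inequality you would need on the event $|v_i-v_i'|\ge a$, namely $1-\cos(s(v_i-v_i'))\gtrsim s^2a^2$, is false because $v_i-v_i'$ can be enormous compared to $a$ and $1-\cos$ vanishes at multiples of $2\pi$. (There is also a minor slip: the identity is $1-|\phi_{v_i}(s)|^2=\E[1-\cos(s(v_i-v_i'))]$, without the factor $\tfrac12$.)

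The correct characteristic-function route (the Kolmogorov--Rogozin/Esseen argument, which is what Rogozin's paper and Petrov's book carry out) does not prove a pointwise bound for each $\phi_{v_i}$, but an \emph{averaged} one: one integrates $1-|\phi_{v_i}(tu_i)|^2$ over the whole Esseen window $|t|\le 1/\epsilon$, using an identity of the form $\int_{-T}^{T}(1-\cos(tw))\,dt = 2T-2\sin(Tw)/w$, which is bounded below by $2T(1-\sin 1)$ whenever $|w|\ge 1/T$ regardless of how large $|w|$ is; this is robust precisely against the lattice counterexample above. One then needs an additional convexity/rearrangement step to pass from a lower bound on $\sum_i\int(1-|\phi_i|^2)$ to an upper bound on $\int\prod_i|\phi_i|$, and this is where the constraint $\epsilon\ge c\,a\max_i|u_i|$ genuinely enters. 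So the fix is not cosmetic: the averaged mechanism is structurally different from your pointwise one. For the purposes of this paper it is indeed fine to cite Rogozin, but the sketch as written does not constitute a proof.
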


The next Lemma was shown by Rebrova and Tikhomirov \cite{RebTikh} as a consequence of Rogozin's Theorem \cite{rogozin}.

\begin{lemma}[consequence of Rogozin's theorem]\label{smallball}
Let $X$ be a random vector in $\R^n$ with independent coordinates whose distributions have concentration function separated from 1 (with constants $a$ and $b$). Then there exist constants $u\in\R$ and $v\in(0,1)$ (dependent only on $a$ and $b$) so that for an arbitrary $y\in\sfe,$ one has
$$\sup_{z\in\R}P(|\langle X, y\rangle-z|\leq u)\leq v.$$
\end{lemma}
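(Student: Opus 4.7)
The plan is to derive the statement as a direct consequence of Rogozin's Theorem \ref{rogozin}, combined with a case analysis based on whether the unit vector $y$ has a coordinate of macroscopic size. The main reason a single application of Rogozin does not suffice is that its hypothesis $\epsilon > \max_i ca|y_i|$ forces $\epsilon$ to be at least of order $ca$ when $y$ is close to a coordinate vector, and Rogozin's bound $C\epsilon/|y|$ may then exceed $1$. So I would fix a threshold $t > 0$ (depending only on $a,b$) and split into the case $\max_i |y_i| \geq t$ and the case $\max_i |y_i| < t$.

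In the first case, I pick the index $i$ with $|y_i| \geq t$ and condition on $\{X_j\}_{j\neq i}$. Then $\langle X, y\rangle - z$ becomes $y_i X_i - w$ for some conditional constant $w$, and Definition \ref{concentration} gives, for any $u \leq at$,
\[
P(|y_i X_i - w| \leq u) \leq P(|X_i - w/y_i| < a) \leq b,
\]
uniformly in $w$; integrating out the conditioning yields the bound $b$ for $\sup_z P(|\langle X, y\rangle - z|\leq u)$. In the second case, every $|y_i|$ is less than $t$, so $\max_i ca|y_i| < cat$, and Theorem \ref{rogozin} applies with, say, $\epsilon = 2cat$ to give $\sup_z P(|\langle X, y\rangle - z|< 2cat) \leq 2Ccat$ (using $|y|=1$).

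To unify the two regimes into a single pair $(u,v)$, I would take $u \leq \min(at, 2cat)$ and choose $t$ small enough, e.g.\ $t = 1/(4Cca)$, so that the Rogozin bound satisfies $2Ccat \leq 1/2$. Setting $v := \max(b, 1/2) < 1$ then produces the required uniform small-ball estimate, with both constants depending only on $a, b$ and the absolute constants supplied by Theorem \ref{rogozin}.

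The only real obstacle is matching the two scales so that one threshold $t$ simultaneously (i) fits inside the regime where the conditioning-plus-concentration argument yields $b$ and (ii) makes Rogozin's estimate nontrivial; this is purely arithmetic and is handled by the choice of $t$ above. Everything else reduces to invoking Definition \ref{concentration} and Theorem \ref{rogozin} as black boxes.
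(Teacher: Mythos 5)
The paper does not give its own proof of this lemma; it cites Rebrova and Tikhomirov \cite{RebTikh}, who derive it essentially along the lines you propose: split according to whether $\max_i|y_i|$ exceeds a threshold $t=t(a,b)$, use Definition \ref{concentration} after conditioning on $\{X_j\}_{j\neq i}$ when some coordinate is macroscopic, and invoke Rogozin's Theorem \ref{rogozin} when all coordinates are small, then choose $t$ so that $2Ccat<1$. Your argument is correct; the only (cosmetic) adjustment is to take $u$ strictly below $\min(at,2cat)$, say $u=\tfrac12\min(at,2cat)$, so that the closed event $\{|X_i-w/y_i|\le u/|y_i|\}$ is genuinely contained in the open event $\{|X_i-w/y_i|<a\}$ from Definition \ref{concentration}, and likewise so that $\{|\langle X,y\rangle-z|\le u\}$ lies inside the strict-inequality event that Rogozin's theorem controls.
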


Further, we shall make use of the following simple corollary of Rogozin's theorem.

\begin{lemma}[Another corollary of Rogozin's theorem]\label{rogozin-lemma}
Fix $C_1, C_2>0$. Consider and vector $u\in\mathbb{S}^{n-1}$ such that $\#\{i:\, |u_i|\geq \frac{c_1}{\sqrt{n}}\}\geq c_2n.$ For any random vector $v=(v_1,...,v_n)$ with independent coordinates having bounded concentration functions with parameters $a$ and $b$, one has, for any $\epsilon>\frac{C_1}{\sqrt{n}}$, that
$$\sup_{z\in\R}P(|\langle v,u\rangle-z|<\epsilon)\leq C_2\epsilon.$$
Here constants $C_1$ and $C_2$ depend only on $a$ and $b,$ and $c_1, c_2.$
\end{lemma}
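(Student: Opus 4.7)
The plan is to reduce Lemma \ref{rogozin-lemma} to a direct application of Rogozin's theorem (Theorem \ref{rogozin}), after first extracting a well-behaved sub-coordinate set on which the vector $u$ has controlled coordinates both from below and from above. The condition $\epsilon > C_1/\sqrt{n}$ requires that we apply Rogozin to a restriction of $u$ whose coordinates are uniformly $O(1/\sqrt{n})$, while the resulting denominator $|u|$ from Rogozin must stay bounded away from zero; both of these will be arranged by a pigeonhole-style truncation.

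First I would use the Cauchy--Schwarz / pigeonhole observation: since $|u|^2=1$, for any $M>0$,
\[
\#\{i: |u_i| > M/\sqrt{n}\} \leq \frac{n}{M^2}.
\]
Choose $M:=\sqrt{2/c_2}$ so that this bad set has size at most $c_2 n/2$. Intersecting its complement with the given set $\sigma = \{i : |u_i| \geq c_1/\sqrt{n}\}$ produces a subset
\[
\sigma' = \bigl\{ i : c_1/\sqrt{n} \leq |u_i| \leq M/\sqrt{n}\bigr\}
\]
with $\#\sigma' \geq c_2 n/2$. In particular $|u|_{\sigma'}|^2 \geq (c_2 n/2)\cdot (c_1^2/n) = c_1^2 c_2/2$, so $|u|_{\sigma'}| \geq c_1\sqrt{c_2/2}$, which is a constant depending only on $c_1, c_2$.

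Next, I would condition on the coordinates $(v_i)_{i \notin \sigma'}$. Writing $\langle v, u\rangle = \sum_{i\in\sigma'} v_i u_i + w$ with $w := \sum_{i\notin\sigma'} v_i u_i$ a constant after conditioning, we have for any $z \in \R$,
\[
P\bigl(|\langle v,u\rangle - z| < \epsilon\bigr)
= \E\Bigl[ P\bigl(|\langle v_{\sigma'}, u|_{\sigma'}\rangle - (z-w)| < \epsilon \,\bigm|\, (v_i)_{i \notin \sigma'}\bigr)\Bigr].
\]
The inner probability is the exact object to which Theorem \ref{rogozin} applies (in dimension $\#\sigma'$), with independent-coordinate vector $v_{\sigma'}$ whose entries still have bounded concentration function with the same parameters $a,b$. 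The hypothesis needed there is $\epsilon > \max_{i\in\sigma'} ca|u_i|$, which is guaranteed as soon as $\epsilon > caM/\sqrt{n}$; this dictates the choice $C_1 := caM = ca\sqrt{2/c_2}$.

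Under that choice, Rogozin's theorem yields
\[
\sup_{z'\in\R} P\bigl(|\langle v_{\sigma'}, u|_{\sigma'}\rangle - z'| < \epsilon\bigr)
\leq \frac{C\,\epsilon}{|u|_{\sigma'}|} \leq \frac{C\,\epsilon}{c_1\sqrt{c_2/2}} =: C_2 \epsilon,
\]
where $C$ depends only on $a,b$ (from Rogozin's theorem) and $C_2$ depends only on $a,b,c_1,c_2$. Taking the supremum over $z$ on the outside and pulling it inside the expectation (which only makes the bound larger) gives the claimed inequality. There is no essential obstacle; the only subtlety is recognizing that one must \emph{strip away} the coordinates of $u$ that are too large (to fit Rogozin's threshold condition) while simultaneously keeping enough coordinates where $|u_i|$ is bounded below (to keep the denominator $|u|_{\sigma'}|$ uniformly nontrivial), and the pigeonhole bound makes both possible simultaneously.
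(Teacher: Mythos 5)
Your proof is correct and takes essentially the same route as the paper: restrict to a favorable subset of coordinates, condition on the remaining coordinates of $v$ to exploit independence, and apply Rogozin's Theorem \ref{rogozin} to the restriction. However, you have added a step that the paper's own proof omits, and this addition appears to be genuinely necessary. The paper applies Rogozin to the restriction of $u$ to $\sigma=\{i:|u_i|\geq c_1/\sqrt n\}$, and then asserts that the required threshold is $\epsilon>ca\cdot c_1/\sqrt n$. But Rogozin's condition is $\epsilon>ca\max_i |u_i|$ over the coordinates actually used, and $\sigma$ only bounds $|u_i|$ from \emph{below}: a coordinate in $\sigma$ could be as large as $1$, in which case Rogozin's hypothesis would force $\epsilon$ to be a constant, not $O(1/\sqrt n)$. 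Your pigeonhole truncation removing the set $\{i:|u_i|>M/\sqrt n\}$ (of size at most $n/M^2\leq c_2n/2$ by the choice $M=\sqrt{2/c_2}$) produces a two-sided controlled set $\sigma'$ with $c_1/\sqrt n\leq |u_i|\leq M/\sqrt n$ and $\#\sigma'\geq c_2 n/2$, which simultaneously keeps Rogozin's threshold at the right scale and keeps $|u|_{\sigma'}|$ bounded below by $c_1\sqrt{c_2/2}$. This is exactly the care the argument needs, and your proof is the more complete version of the paper's.
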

\begin{proof} Let $\sigma=\{i:\, |u_i|\geq \frac{c_1}{\sqrt{n}}\}$. By our assumption, $\#\sigma\geq c_2n.$ Consider the random variable 
$$R:=\sum_{j\not\in\sigma} u_j v_j,$$
and note that
$$\langle u,v\rangle=R+\sum_{j\in\sigma} u_j v_j.$$
Observe that $R$ is independent of $\sum_{j\in\sigma} u_j v_j,$ and therefore
$$\sup_{z\in\R}P(|\langle v,u\rangle-z|<\epsilon)=P\left(|R+\sum_{j\in\sigma} u_j v_j-z|<\epsilon\right)\leq sup_{z\in \R} P(|\sum_{j\in\sigma} u_j v_j-z|<\epsilon).$$
Note that $\sum_{j\in\sigma} u_j^2\geq C''$. By Rogozin's theorem, for any $\epsilon>ca\cdot \frac{c_1}{\sqrt{n}},$ the right hand side of the above is bounded from above by $C(a,b)\epsilon$, finishing the proof.
\end{proof}

Lemma \ref{tensorization} and Lemma \ref{smallball} imply:

\begin{lemma}\label{smallballmatrix}
For any matrix $A$ with independent entries which have bounded concentration function, there exist absolute constants $c$ and $C$ (which depend only on $a$ and $b$) such that for any $x\in\sfe,$
$$P\left(||Ax||\leq c\sqrt{N}\right)\leq e^{-CN}.$$
\end{lemma}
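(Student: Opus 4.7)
The aim is a pointwise lower bound of $|Ax|$ that holds with exponentially high probability. Write $|Ax|^2=\sum_{i=1}^N \langle X_i,x\rangle^2$, where $X_1,\dots,X_N$ denote the rows of $A$. Since all entries of $A$ are independent, the rows $X_i$ are independent random vectors, and within each row the coordinates are themselves independent with concentration function bounded by constants $a,b$. Hence for any fixed $x\in\sfe$, Lemma \ref{smallball} (applied to each row with $y=x$) produces constants $u>0$ and $v\in(0,1)$, depending only on $a,b$, such that
$$P\bigl(|\langle X_i, x\rangle|\leq u\bigr)\leq v\qquad\text{for every }i=1,\dots,N.$$

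The plan is now a direct tensorization. Define the independent Bernoulli indicators $Z_i:=\mathbf{1}\{|\langle X_i,x\rangle|>u\}$; these satisfy $\E Z_i\geq 1-v$, and pointwise
$$|Ax|^2=\sum_{i=1}^N \langle X_i,x\rangle^2\;\geq\; u^2\sum_{i=1}^N Z_i.$$
Consequently $\{|Ax|^2\leq u^2\,(1-v)N/2\}\subset\{\sum_i Z_i\leq (1-v)N/2\}$, and the standard Chernoff/Hoeffding bound for sums of independent Bernoullis yields
$$P\!\left(\sum_{i=1}^N Z_i\leq \frac{(1-v)N}{2}\right)\leq e^{-C(v)N}.$$
Setting $c:=u\sqrt{(1-v)/2}$ and $C:=C(v)$, both depending only on $a,b$, gives the claimed bound $P(|Ax|\leq c\sqrt{N})\leq e^{-CN}$.

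Alternatively, one can reach the same conclusion via the moment-generating function argument used to prove Lemma \ref{tensorization}: bound $\E e^{-t\langle X_i,x\rangle^2}\leq v+(1-v)e^{-tu^2}$, multiply over $i$ by independence, and optimize in $t$. There is no substantive obstacle in the argument; the only thing to be careful about is that the hypothesis of Lemma \ref{tensorization} as stated (a linear small-ball bound $P(Y_i\leq\e)\leq K\e$ for all $\e$) is not literally what Lemma \ref{smallball} provides, so one either invokes the Bernoulli-indicator reduction above, or redoes the Laplace transform step with the single-scale small-ball estimate $P(Y_i\leq u)\leq v$.
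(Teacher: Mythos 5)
Your proof is correct and, in spirit, follows the same route as the paper (pointwise small-ball for each row, then concentration of the sum), but it is actually more careful than the paper's own one-line derivation. The paper asserts that Lemma \ref{smallballmatrix} follows from Lemma \ref{tensorization} together with Lemma \ref{smallball}. However, as you correctly flag, the hypothesis of Lemma \ref{tensorization} as stated requires a linear small-ball bound $P(Y_i\le\epsilon)\le K\epsilon$, whereas Lemma \ref{smallball} only supplies a single-scale estimate $P(|\langle X_i,x\rangle|\le u)\le v$. If one naively applies Lemma \ref{tensorization} with $\epsilon=u$ and $K=v/u$, the conclusion is $(Cv)^N$, which is useful only if $Cv<1$; since $C$ is the absolute constant coming out of the tensorization proof and $v$ is whatever Rogozin's theorem gives, there is no guarantee of this. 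Your patch — defining Bernoulli indicators $Z_i=\mathbf{1}\{|\langle X_i,x\rangle|>u\}$ with $\E Z_i\ge 1-v$, noting $|Ax|^2\ge u^2\sum_iZ_i$, and applying a Chernoff bound to the independent Bernoullis — is the standard way to upgrade a one-scale small-ball estimate to an exponential bound, and it closes the gap cleanly; so does your alternative of redoing the Laplace-transform step of Lemma \ref{tensorization} with the single-scale hypothesis. Either way, the constants $c,C$ you obtain depend only on $u,v$, hence only on $a,b$, exactly as required.
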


Next, we shall need more elaborate estimates on the small ball probability; Rudelson and Vershynin \cite{RudVer-square}, \cite{RudVer-general}, \cite{RudVer-delocalization} have introduced for such purposes a notion of ``essential least common denominator''.

\begin{definition}[Rudelson-Vershynin]\label{LCD}
Fix $\alpha,c>0.$ For a vector $a\in\R^m,$ the least common denominator (LCD), is defined as
$$LCD_{\alpha,c}(a):=inf\{\theta>0:\,dist(\theta a,\mathbb{Z}^m)<\min(c|\theta a|,\alpha)\}.$$
Further, the LCD of a subspace $H$ in $\R^m$ is defined as
$$LCD_{\alpha,c}(H):=\inf_{v\in\mathbb{S}^{m-1}\cap H} LCD_{\alpha,c}(v).$$
Lastly, LCD of a matrix $A$ with rows $\{a_1,...,a_N\}\subset\R^m$ is defined as
$$LCD_{\alpha,c}(A):=inf\{|\theta|:\,\theta\in\R^m,\, dist(A\theta,\mathbb{Z}^N)<\min(c|A\theta|,\alpha)\}.$$
Note that when $m=1$, the matrix version of the LCD specializes to the LCD of a vector in $\R^N$.
\end{definition}

The next theorem is a deep result of Rudelson and Vershynin appearing as Theorem 3.3 in \cite{RudVer-general}.

\begin{theorem}[Rudelson-Vershynin]\label{small-ball}
Let $M:\R^N\rightarrow\R^m$ be a matrix such that for all $x\in\R^m,$ one has $|M^Tx|\geq |x|$. 
Let $X\in\R^N$ be a random vector with i.i.d. coordinates whose concentration functions are separated from $1.$ Then for every $\alpha>0,$ $c\in(0,1)$ and 
$$\epsilon>\frac{\sqrt{m}}{LCD_{\alpha,c}(M)},$$
one has
$$P\left(|MX|\leq \epsilon\sqrt{m}\right)\leq \left(\frac{C_1\epsilon}{c}\right)^m+C_2^m e^{-c\alpha^2}.$$
Here $C_1,$ $C_2$ and $c$ are absolute constants which only depend on the concentration function bound.
\end{theorem}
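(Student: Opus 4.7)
The plan is to prove this via a Fourier-analytic approach based on the Esseen inequality, which is the standard route for LCD-based small ball estimates. The starting point is the multivariate Esseen bound: for any random vector $Z \in \R^m$ and any $R>0$,
$$P(|Z| \leq \sqrt{m}/R) \leq (C/R)^m \int_{|\theta| \leq R} |\phi_Z(\theta)| \, d\theta,$$
where $\phi_Z(\theta) = \E e^{i\langle \theta, Z\rangle}$. Applying this with $Z = MX$ and $R = \sqrt{m}/\epsilon$ reduces the task to estimating the integral of $|\phi_{MX}|$ over the ball $R B_2^m$. The hypothesis $\epsilon > \sqrt{m}/LCD_{\alpha,c}(M)$ is exactly the statement that $R < LCD_{\alpha,c}(M)$, so the dichotomy built into the definition of LCD applies on the whole region.

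Next, I would exploit independence of the coordinates of $X$ to factorize
$$\phi_{MX}(\theta) = \E e^{i\langle M^T\theta, X\rangle} = \prod_{k=1}^N \phi_{X_1}\bigl((M^T\theta)_k\bigr).$$
The bounded concentration function assumption, applied to the symmetrization $X_1 - X_1'$ via the identity $|\phi_{X_1}(t)|^2 = \E \cos\bigl(t(X_1-X_1')\bigr)$, yields a pointwise bound $|\phi_{X_1}(t)| \leq \exp\bigl(-c\,\mathrm{dist}(t/(2\pi), \Z)^2\bigr)$, with constants depending only on $a,b$. Multiplying over $k$ gives
$$|\phi_{MX}(\theta)| \leq \exp\Bigl(-c\,\mathrm{dist}\bigl(M^T\theta/(2\pi),\, \Z^N\bigr)^2\Bigr),$$
after absorbing the factor $2\pi$ into the absolute constants.

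The core step is to split $\{|\theta|\leq R\}$ along the LCD dichotomy. Since $R < LCD_{\alpha,c}(M)$, for every $\theta$ in this ball at least one of $\mathrm{dist}(M^T\theta,\Z^N) \geq c|M^T\theta|$ or $\mathrm{dist}(M^T\theta,\Z^N) \geq \alpha$ must hold. On the first sub-region, combining with the expansion hypothesis $|M^T\theta| \geq |\theta|$ yields $|\phi_{MX}(\theta)| \leq e^{-c'c^2|\theta|^2}$; the resulting Gaussian integral contributes $(C\sqrt{m}/c)^m$, and after the Esseen prefactor $(C\epsilon/\sqrt{m})^m$ we recover the $(C_1\epsilon/c)^m$ term. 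On the second sub-region, $|\phi_{MX}(\theta)| \leq e^{-c'\alpha^2}$ uniformly; bounding the volume trivially by that of $R B_2^m$ and multiplying by the Esseen prefactor gives the $C_2^m e^{-c\alpha^2}$ term.

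The main technical obstacle is the pointwise characteristic function bound. One needs $|\phi_{X_1}(t)| \leq \exp(-c\,\mathrm{dist}(t,2\pi\Z)^2)$ to hold with a universal constant across the full range of $t$ that appears as $(M^T\theta)_k$; this requires squeezing the maximum out of the symmetrization trick and carefully matching the quadratic regime (small distance) with the flat $e^{-c\alpha^2}$ regime, which is exactly why the LCD is defined as the infimum of $|\theta|$ subject to the \emph{coupled} condition $\mathrm{dist}(M^T\theta,\Z^N) < \min(c|M^T\theta|,\alpha)$. A secondary delicate point is ensuring that the expansion $|M^T x|\geq |x|$ is actually used in the correct place, namely to pass from a Gaussian decay in $|M^T\theta|$ (which comes naturally out of the characteristic function estimate) to a Gaussian decay in $|\theta|$, so that the resulting integral over $R B_2^m$ is controlled in terms of $m$ alone.
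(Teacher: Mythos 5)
The paper does not prove this theorem; it is quoted verbatim, with attribution, as Theorem~3.3 of Rudelson--Vershynin \cite{RudVer-general}, and is used as a black box. Your proposed architecture --- Esseen's small-ball inequality, factorization of $\phi_{MX}$ over the independent coordinates of $X$, and a split of the frequency ball according to the LCD dichotomy, with the expansion hypothesis $|M^T\theta|\geq|\theta|$ converting Gaussian decay in $|M^T\theta|$ into Gaussian decay in $|\theta|$ --- is indeed the skeleton of the Rudelson--Vershynin argument, so the plan is the right one.

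The gap is in the step you yourself flag as the ``main technical obstacle,'' and it is not merely a matter of optimizing the symmetrization: the pointwise bound $|\phi_{X_1}(t)|\leq\exp\bigl(-c\,\mathrm{dist}(t/(2\pi),\Z)^2\bigr)$ is simply false under only the bounded-concentration hypothesis. Take $X_1=\pm1$ Bernoulli (which has concentration function separated from $1$, say with $a=1/2$, $b=1/2$); the symmetrization $\bar X=X_1-X_1'$ takes values in $\{-2,0,2\}$, so $\E\sin^2(\pi\bar X/2)=0$ and hence $|\phi_{X_1}(\pi)|=1$, while $\mathrm{dist}(\pi/(2\pi),\Z)=1/2$. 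Structurally, the quasi-period of $|\phi_{X_1}|$ is dictated by the scale of $\bar X$, and the concentration hypothesis controls that scale only through the parameter $a$ and only on an event of probability at least $1-b$. The actual argument therefore must first restrict to a positive-probability event on which $|\bar X|$ is comparable to $a$; on that event one bounds $\sum_k\sin^2\bigl((M^T\theta)_k\bar X/2\bigr)$ from below by $c\,\mathrm{dist}\bigl(\bar X M^T\theta/(2\pi),\Z^N\bigr)^2$, and the LCD dichotomy is applied to the \emph{rescaled} frequency $\bar X\theta/(2\pi)$ --- which one must separately verify stays below the LCD threshold, again using the bound on $|\bar X|$. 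Your sketch identifies $\theta$ with this rescaled frequency and so does not invoke the LCD at the correct point; this is where the argument, as written, fails, and it is also precisely where the constants pick up their advertised dependence on $a$ and $b$.
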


We shall need two corollaries of Theorem \ref{small-ball}, which are derived in \cite{RudVer-general} under the names Theorem 4.2 and Lemma 4.6 correspondingly.

\begin{corollary}[distance to a general subspace, Rudelson-Vershynin \cite{RudVer-general}]\label{Cor-random}
Let $m$ be an integer smaller than $N$. Let $X$ be a random vector in $\R^N$ with i.i.d. coordinates having concentration function separated from $1$ with constants $\tilde{a}$ and $\tilde{b}$, and let $H$ be a fixed $(N-m)$-dimensional subspace of $\R^N$. Pick any $v\in\R^N$. Then for every $\epsilon>\frac{\sqrt{N}}{LCD_{\alpha,c}(H^{\perp})}$, one has
$$P\left(dist(X,H+v)\leq \epsilon\sqrt{m}\right)\leq \left(\frac{C_1\epsilon}{c}\right)^m+C_2^m e^{-C_3\alpha^2},$$ 
where $C_1,$ $C_2$ and $C_3$ depend only on $\tilde{a}$ and $\tilde{b}$.
\end{corollary}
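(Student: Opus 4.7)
The plan is to reduce the distance-to-affine-subspace problem to a small ball estimate to which Theorem \ref{small-ball} applies directly. First I observe that the distance of $X$ to the affine subspace $H+v$ equals the length of the orthogonal projection of $X-v$ onto the complementary subspace $H^\perp$:
$$dist(X, H+v) = |\Pi_{H^\perp}(X-v)|.$$
Fixing an orthonormal basis $u_1,\ldots,u_m$ of $H^\perp$ and forming the $m\times N$ matrix $M$ whose rows are $u_1,\ldots,u_m$, this becomes
$$dist(X, H+v) = |MX - Mv|.$$

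Next I verify the hypotheses of Theorem \ref{small-ball} for the operator $M$. Since $M^T\colon\R^m\to\R^N$ is an isometric embedding (its image is $H^\perp$ with the induced Euclidean structure), the requirement $|M^Tx|\geq |x|$ holds trivially, with equality. Moreover, by the definitions in \ref{LCD}, the infimum defining $LCD_{\alpha,c}(M)$ is taken over unit vectors in the row span of $M$, which is exactly $H^\perp$; thus $LCD_{\alpha,c}(M) = LCD_{\alpha,c}(H^\perp)$. Since $m\leq N$, the assumption $\epsilon>\sqrt{N}/LCD_{\alpha,c}(H^\perp)$ is (slightly) stronger than $\epsilon>\sqrt{m}/LCD_{\alpha,c}(M)$, so the quantitative threshold in Theorem \ref{small-ball} is satisfied.

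The remaining point is that Theorem \ref{small-ball} is stated without a shift, whereas I need to control $P(|MX-Mv|\leq\epsilon\sqrt{m})$. The fix is to invoke the shift-invariant form of the conclusion: the Esseen-type argument underlying Theorem \ref{small-ball} in \cite{RudVer-general} bounds the supremum over translations,
$$\sup_{w\in\R^m} P\bigl(|MX-w|\leq \epsilon\sqrt{m}\bigr) \leq \left(\tfrac{C_1\epsilon}{c}\right)^m + C_2^m e^{-C_3\alpha^2},$$
since characteristic-function bounds on $\phi_{MX}$ are automatically translation invariant. Specializing to $w=Mv$ and combining with the identification $dist(X,H+v)=|MX-Mv|$ yields the corollary. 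The only conceptually nontrivial step is this shift-invariance of the LCD-based small ball estimate; everything else is linear-algebraic bookkeeping, and no new probabilistic input beyond Theorem \ref{small-ball} is required.
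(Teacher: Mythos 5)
Your proof is correct and reproduces exactly the argument used to derive Theorem 4.2 in Rudelson--Vershynin \cite{RudVer-general}, which the present paper simply cites rather than reproves: identify $dist(X,H+v)=|M(X-v)|$ via the isometric embedding $M^T:\R^m\hookrightarrow H^\perp$, check $LCD_{\alpha,c}(M)=LCD_{\alpha,c}(H^\perp)$ and $\sqrt N\ge\sqrt m$ so the threshold is met, and invoke Theorem \ref{small-ball}. The only caveat is that the paper states Theorem \ref{small-ball} without the translate $w$, so you do need the shift-invariant (L\'evy concentration function) form as you note; this is not a gap, since the paper explicitly acknowledges the translation invariance of these small ball estimates at the end of Section 4.
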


\begin{corollary}[Rudelson-Vershynin \cite{RudVer-general}]\label{Cor-determ}
Fix $D>0$. Let $x\in\R^N$ have $LCD_{\alpha,c}(x)\geq D.$ Fix a positive integer $m\leq \tilde{c}N,$ for an appropriate $\tilde{c}>0.$ Let $A$ be a random matrix with independent entries, i.i.d. rows and bounded concentration function of the entries, and let $B$ be an $(N-m)\times N$ submatrix of $A^T.$ Then for every $t>0$ we have
$$P(|Bx|<t\sqrt{N})\leq \left(Ct+\frac{C}{D}+Ce^{-c\alpha^2}\right)^{N-m},$$
where $c$ and $C$ depend only on the concentration function bounds.
\end{corollary}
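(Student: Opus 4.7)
My plan is to prove the corollary by reducing to a single-row small ball inequality and then tensorizing. First I would unpack the distributional structure of $B$: because the entries of $A$ are independent and its rows are i.i.d., every column of $A$ consists of i.i.d.\ coordinates whose concentration function is separated from $1$, and distinct columns of $A$ are stochastically independent. Since the rows of $B$ are $N-m$ of the columns of $A$, the rows $b_1,\dots,b_{N-m}$ of $B$ are therefore independent random vectors in $\R^N$, each individually with i.i.d.\ coordinates of bounded concentration function---which is exactly the setting of the LCD-based small ball estimate. Since $|Bx|$ scales linearly in $|x|$ and $LCD_{\alpha,c}(\lambda x)=LCD_{\alpha,c}(x)/\lambda$, I would assume $|x|=1$ throughout.

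Next I would apply Theorem \ref{small-ball} with $m=1$ and the $1\times N$ matrix $M:=x^T$: the requirement $|M^T y|\geq|y|$ is precisely $|x|\geq 1$, and $LCD_{\alpha,c}(M)$ agrees with $LCD_{\alpha,c}(x)\geq D$. This produces, for every $\epsilon>1/D$, the bound $P(|\langle b_i,x\rangle|\leq\epsilon)\leq C\epsilon+Ce^{-c\alpha^2}$; extending by the trivial upper bound on the regime $\epsilon\leq 1/D$ gives, for every $\epsilon>0$,
$$P\bigl(|\langle b_i,x\rangle|\leq\epsilon\bigr)\leq C\epsilon+C/D+Ce^{-c\alpha^2}.$$
Because this bound carries the nonzero floor $C/D+Ce^{-c\alpha^2}$ it is not of the pure $K\epsilon$ form required by Lemma \ref{tensorization}, so I would use its Laplace-transform variant instead. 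For $Y_i:=|\langle b_i,x\rangle|$ the layer-cake identity gives
$$\E e^{-Y_i^2/\epsilon^2}=\int_0^\infty 2ue^{-u^2}P(Y_i<\epsilon u)\,du\leq C'\epsilon+C/D+Ce^{-c\alpha^2},$$
since the Gaussian weight $2ue^{-u^2}$ localizes $u$ near $1$. Independence of the $b_i$'s together with Markov then yields
$$P\Bigl(\sum_{i=1}^{N-m}Y_i^2\leq(N-m)\epsilon^2\Bigr)\leq e^{N-m}\prod_{i=1}^{N-m}\E e^{-Y_i^2/\epsilon^2}\leq\bigl(C''\epsilon+C/D+Ce^{-c\alpha^2}\bigr)^{N-m}.$$
Finally, substituting $\epsilon=t\sqrt{N/(N-m)}$ converts the left-hand side to $P(|Bx|<t\sqrt N)$, and the factor $\sqrt{N/(N-m)}\leq 1/\sqrt{1-\tilde c}$ is absorbed into the constants via the hypothesis $m\leq \tilde c N$ with $\tilde c<1$.

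The main obstacle, and the place where the nontrivial arithmetic input lives, is the single-row small ball estimate itself, i.e.\ Theorem \ref{small-ball}; that is where Esseen-type inequalities and the geometry underlying the essential least common denominator enter. Everything else is a rather transparent bookkeeping reduction: the distributional structure of $B$ follows immediately from independence of the entries together with i.i.d.\ rows of $A$, and the Laplace-transform tensorization is a standard robust replacement for Lemma \ref{tensorization} whenever the single-variable small ball bound carries a nonvanishing additive floor.
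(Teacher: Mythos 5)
The paper does not actually prove Corollary \ref{Cor-determ}: it is quoted from Rudelson--Vershynin (Lemma 4.6 of \cite{RudVer-general}), with the authors remarking only that (i) the sub-gaussian hypothesis there was used solely to bound the concentration function, so it can be replaced by the explicit bounded-concentration assumption, and (ii) only the rows (not columns) of $A$ need to be i.i.d. Your argument supplies the missing details and it follows the standard Rudelson--Vershynin route: you correctly observe that independence of entries plus i.i.d.\ rows of $A$ makes the $N-m$ rows of $B$ independent vectors with i.i.d.\ coordinates of bounded concentration (this is exactly the content of remarks (i) and (ii)); you then apply Theorem \ref{small-ball} with the $1\times N$ matrix $M=x^T$ to each row to get the single-coordinate small ball bound with the additive floor $C/D + Ce^{-c\alpha^2}$; and you tensorize via the Laplace-transform/Markov argument, which is precisely the mechanism behind Lemma \ref{tensorization} and correctly accommodates the nonvanishing floor. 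The computations (layer-cake identity, the Markov step, and the absorption of $\sqrt{N/(N-m)}\leq (1-\tilde c)^{-1/2}$ into constants) are all sound. The one point worth flagging is the normalization $|x|=1$: the corollary as stated does not explicitly restrict $|x|$, and as you note the scaling of the LCD and of $|Bx|$ do not cancel, so "assume $|x|=1$" is not a pure WLOG reduction of the stated claim; however, the hypothesis $|M^T y|\geq |y|$ of Theorem \ref{small-ball} forces $|x|\geq 1$, and in every application in the paper (Lemma \ref{infimum}) the vector $x$ lies in the annulus $\tfrac12 B_2^n\setminus\tfrac32 B_2^n$, so the intended reading has $|x|$ bounded above and below, and your reduction is then harmless after adjusting constants.
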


We remark that the corresponding results in \cite{RudVer-general} are formulated with the sub-gaussian assumption, however it was only used to derive the bound on the concentration function, which we assume explicitly instead. We also emphasize that only rows (and not columns) of $A$ need to be i.i.d. for Corollary \ref{Cor-determ} to hold. Finally, we emphasize that the mean zero assumption is not required for the small ball estimates either, since the statement is translation invariant, as was later explained by Rudelson, Vershynin in \cite{RudVer-delocalization}. Also, Rudelson and Vershynin state an additional hypothesis that the LCD is bounded by $2D,$ however it is not required either, as can be easily verified.



\section{Tall and compressible cases.} 

We shall follow the scheme developed by Rudelson and Vershynin in \cite{RudVer-square}: that is, we shall consider a decomposition of the sphere to the set of ``compressible'' and ``incompressible'' vectors. Such decomposition, in fact, goes back to Litvak, Pajor, Rudelson, Tomczak-Jaegermann \cite{LPRT}, and was used in many papers by Rudelson, Vershynin, Tatarko, Tikhomirov, Rebrova et al \cite{RudVer-general}, \cite{tatarko}, \cite{Tikh}, \cite{RebTikh}.

Let $\rho,\delta>0$. As was discussed in the beginning of Section 2, a vector is $\delta$-sparse if at most $\delta n$ of its coordinates are non-zero. Compressible vectors $Comp(\delta,\rho)$ are such vectors on the sphere that are euclidean distance at most $\rho$ from the collection of $\delta-$sparse vectors. Incompressible vectors  $Incomp(\delta,\rho)$ are the vectors which are not compressible.

\medskip

\textbf{Notation.} 
\begin{itemize}
\item $Comp(\delta, \rho):=\{x\in\sfe:\, \exists y\in Sparse(\delta n)\,s.t.\, |x-y|\leq \rho\},$
\item $Incomp(\delta,\rho):=\sfe\setminus Comp(\delta,\rho).$
\end{itemize}

We shall begin with proving Proposition \ref{tallnodep}, and later switch to the beginning of the proof of Theorems \ref{main1} and \ref{main}.

\subsection{Proof of Proposition \ref{tallnodep}.} By the assumption of the proposition, $\E||A||_{HS}^2\leq KnN,$ and therefore,
$$P(||A||_{HS}^2\geq 10KnN)\leq 0.1.$$
Consequently,
\begin{equation}\label{condit}
P\left(\sigma_n(A)\leq C\sqrt{N}\right)\leq P\left(\sigma_n(A)\leq C\sqrt{N}\,|\,||A||_{HS}^2\leq 10KnN\right)+0.1.
\end{equation}
Let $\mathcal{N}$ be the net from Lemma \ref{HS}, with $\epsilon=1.$ Provided that the constant $C$ is chosen appropriately, the union bound together with Lemma \ref{HS} yields that
\begin{equation}\label{netHS}
P\left(\sigma_n(A)\leq C\sqrt{N}\,|\,||A||_{HS}^2\leq 10KnN\right)\leq e^{C'n}\cdot\sup_{x\in \frac{3}{2}B_2^n\setminus \frac{1}{2}B_2^n}P\left(|Ax|< C_1\sqrt{N}\right).
\end{equation}
It remains to note that Lemma \ref{tensorization} together with the assumptions of the Proposition imply that
\begin{equation}\label{smb-rows}
P(|Ax|< C_1\sqrt{N})\leq e^{-C_2N},
\end{equation}
for appropriate $C_1$ and $C_2.$ Suppose $C_0>0$ is such that for all $n\geq 1,$
$$e^{C'n}e^{-C_2C_0n}<0.1.$$
Letting $N\geq C_0n,$ we have, by (\ref{condit}), (\ref{netHS}), (\ref{smb-rows}) that, for an appropriate constants $C_0>0$ and $C>0,$ depending only on $a,b,K$, 
$$P(\sigma_n(A)<C\sqrt{N})<0.2.$$
An application of Markov's inequality finishes the proof of Proposition \ref{tallnodep}. $\square$

\subsection{Tall case: all entries independent.} We now proceed with the first step of the proof of Theorem \ref{main}.

\begin{proposition}\label{tall-final}
Fix $p>0$. Suppose $N$ and $n$ are integers. Suppose $A$ is an $N\times n$ random matrix with independent entries which have concentration function separated from 1. Suppose also that $A$ satisfies 
$$\sum_{i=1}^n\left(\mathbb{E}|Ae_i|^{2p}\right)^{\frac{1}{p}}\leq KnN e^{\frac{c_0 N}{n}},$$
with some absolute constant $K,$ and a sufficiently small constant $c_0$ (which depends on $K,$ $a$, $b$, $p$.) Suppose that $N\geq C'_0n$, for an appropriate $C'_0>0$ which depends only on $a, b, K,$ and $p$. 
Then, for appropriate absolute constants $C_1, C_2>0,$ which depend only on $p, a, b, K,$ and $C_1$ depends additionally on $p,$ one has 
$$P(\sigma_n\leq C_1\sqrt{N})\leq e^{-C_2\min(p,1)N}.$$
\end{proposition}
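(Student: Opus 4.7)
\textbf{Proof proposal for Proposition \ref{tall-final}.}

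The plan is to run a net-plus-union-bound argument on the sphere, using Corollary \ref{cor2} to construct the net and Lemma \ref{smallballmatrix} to control each fixed point. The key tension is that the moment bound $\sum_i (\E|Ae_i|^{2p})^{1/p}\le KnN e^{c_0N/n}$ contains the exponential factor $e^{c_0N/n}$, so the net approximation (\ref{netmain}) only yields a useful Lipschitz-type bound if $\epsilon$ is chosen to be exponentially small in $N/n$; this forces the net cardinality to grow like $e^{c_0N/2}$, which must then be beaten by the exponential gain from the small-ball estimate on each net point. The assumption $N\ge C_0'n$ with $C_0'$ large (and $c_0$ small) is precisely what makes this balance tip in our favor.

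Concretely, I would fix an absolute constant $\kappa$ (say $\kappa=C'$ from Corollary \ref{cor2} large enough), a small parameter $\delta>0$ to be specified, and set
\[
\epsilon=\delta\,e^{-c_0N/(2n)}.
\]
Applying Corollary \ref{cor2} with $S=\sfe$, this $\epsilon$ and this $\kappa$, together with the standard bound $N(\sfe,\epsilon B_2^n)\le(C/\epsilon)^{n-1}$, produces a net $\mathcal{N}\subset\tfrac{3}{2}B_2^n\setminus\tfrac{1}{2}B_2^n$ of cardinality at most
\[
(C/\delta)^{n}\,e^{c_0N/2}
\]
such that, with probability at least $1-e^{-c'pn}$, every $x\in\sfe$ admits $y\in\mathcal{N}$ with
\[
|A(x-y)|\le C_1\,\frac{\epsilon}{\sqrt{n}}\sqrt{KnN\,e^{c_0N/n}}\le C_2\,\delta\,\sqrt{N}.
\]

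Next, for each fixed $y\in\mathcal{N}$, since $|y|\ge\tfrac12$, Lemma \ref{smallballmatrix} applied to $y/|y|\in\sfe$ gives $P(|Ay|\le c_3\sqrt{N})\le e^{-c_4 N}$. A union bound over $\mathcal{N}$ yields
\[
P\bigl(\exists y\in\mathcal{N}:\,|Ay|\le c_3\sqrt{N}\bigr)\le(C/\delta)^{n}\,e^{c_0N/2}\,e^{-c_4N}.
\]
Provided $c_0$ is small enough (namely $c_0<c_4$) and $C_0'$ is large enough so that $(C/\delta)^{n}\le e^{c_4N/4}$ whenever $N\ge C_0'n$, this right-hand side is bounded by $e^{-c_5N}$. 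Combined with the net failure probability $e^{-c'pn}\le e^{-c'pN/C_0'}$, we obtain
\[
P\bigl(\exists y\in\mathcal{N}:\,|Ay|\le c_3\sqrt{N}\bigr)+P(\text{net fails})\le e^{-c_6\min(p,1)N}.
\]

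Finally, on the intersection of the good event for the net and the event $\{\min_{y\in\mathcal{N}}|Ay|>c_3\sqrt{N}\}$, every $x\in\sfe$ satisfies
\[
|Ax|\ge|Ay|-|A(x-y)|\ge c_3\sqrt{N}-C_2\delta\sqrt{N}\ge\tfrac{c_3}{2}\sqrt{N},
\]
after choosing $\delta=c_3/(2C_2)$. This gives $\sigma_n(A)\ge C_1\sqrt{N}$ with probability at least $1-e^{-C_2\min(p,1)N}$, as claimed. The main obstacle is quantitative rather than conceptual: ensuring the three exponents (the heavy-tail factor $e^{c_0N/2}$, the volumetric factor $(C/\delta)^n$, and the small-ball gain $e^{-c_4N}$) fit together, which is guaranteed by taking $c_0$ sufficiently small and $C_0'$ sufficiently large depending on $K,a,b,p$.
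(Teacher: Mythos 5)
Your argument follows the paper's strategy: build a net via the random rounding machinery, control each fixed net point with the small-ball estimate of Lemma \ref{smallballmatrix}, and win the union bound by playing the heavy-tail factor $e^{c_0N/n}$, the net cardinality, and the small-ball gain $e^{-C_0N}$ off against one another using $c_0$ small and $C_0'$ large. The only substantive difference from the paper's proof is the parameter choice: you invoke Corollary \ref{cor2} with a fixed $\kappa$ and $\epsilon=\delta e^{-c_0N/(2n)}$, while the paper applies Theorem \ref{keytheoremnets} directly with $\kappa=C_0N/(10n)$, $s=p^{-1/p}$, $\epsilon=e^{-c_0N/n}$.

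One inequality in your write-up is reversed, and it is not merely cosmetic. You write $e^{-c'pn}\le e^{-c'pN/C_0'}$, but $N\geq C_0'n$ gives $n\leq N/C_0'$ and hence $e^{-c'pn}\geq e^{-c'pN/C_0'}$. With $\kappa$ held fixed, the failure probability coming out of Corollary \ref{cor2} is exponential in $n$ rather than in $N$, and for very tall matrices (large $N/n$) this term cannot be absorbed into a bound of the form $e^{-C_2\min(p,1)N}$. The paper's choice $\kappa\sim N/n$ is made precisely so that the failure probability $\kappa^{-2pn}(1+1/s^p)^n$ improves as the aspect ratio grows, creating the room your fixed-$\kappa$ version lacks. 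To bring your argument in line with the proposition as stated, let $\kappa$ scale with $N/n$ as in the paper; alternatively note that the proposition is only invoked later in the paper for aspect ratios bounded above (Section~8 reduces $N\geq C_0n$ to $N\leq C_0n$ by passing to submatrices), where $n\asymp N$ and the distinction between $e^{-c'pn}$ and $e^{-c'pN}$ disappears.
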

\begin{proof} By Lemma \ref{smallballmatrix}, there exist absolute constants  $C_0, C'_1>0$ such that
\begin{equation}\label{smallballrefer}
\sup_{x\in\sfe}P\left(|Ax|\leq C_1'\sqrt{N}\right)\leq e^{-C_0N}.
\end{equation}

Consider a net $\mathcal{N}$ from Theorem \ref{keytheoremnets} with parameters $\kappa=e^{\frac{C_0N}{10n}}$, $s=p^{-\frac{1}{p}}$, $\gamma=2,$ $\epsilon=e^{-\frac{c_0N}{n}}$, of size $\#\mathcal{N}\leq e^{C_3n+\left(\frac{C_0}{5}+c_0\right)N}$, for some absolute constant $C_3>1$. By (\ref{smallballrefer}), and the union bound, we have
\begin{equation}\label{netbnb-1}
P\left(\inf_{y\in\mathcal{N}} ||Ay||^2\leq C_1'N\right)\leq e^{C_3n+\left(\frac{C_0}{5}+c_0\right)N} e^{-C_0N}\leq e^{-c_1 N},
\end{equation}
provided that $N\geq C_0'n,$ for an appropriate constant $C_0'>0,$ and that $c_0$ is sufficiently small depending on $p, a, b, K.$

Next, by Theorem \ref{keytheoremnets}, with probability at least $1-e^{-cpN},$ we have
\begin{equation}\label{net-comp-2}
\sigma_n(A)\geq \inf_{y\in\mathcal{N}} ||Ay||-C''\sqrt{KN}.
\end{equation}
Combining (\ref{netbnb-1}) and (\ref{net-comp-2}), we have, for an appropriate choice of constants:
\begin{equation}\label{tall-concl}
P(\sigma_n\leq C_1\sqrt{N})\leq e^{-c_1N}+e^{-cpN},
\end{equation}
finishing the proof.
\end{proof}

\begin{remark} In view of Proposition \ref{tall-final}, it is enough to assume everywhere below that $N$ is not too large; in fact, we shall assume throughout the proof that $N\in [n,(c'+1)n]$. The case $N\in ((c'+1)n, C_0n]$ follows by considering sub-matrices; see the reasoning in the end of Section 8.
\end{remark}

In complete analogy with the Proposition \ref{tall-final}, we get

\begin{lemma}\label{comp-final}
Fix $p>0$. Suppose $A$ is an $N\times n$ random matrix with $N\geq n,$ all entries independent and having bounded concentration function with parameters $a$ and $b$, and such that $\sum_{i=1}^n(\E|Ae_i|^{2p})^{\frac{1}{p}}\leq KNn$. There exist absolute constants $\rho, \delta>0$ which depend (polynomially) only on $p$, $K,$ $a$ and $b,$ such that
$$P\left(\inf_{Comp(\delta,\rho)} |Ax|\leq C_1\sqrt{N}\right)\leq e^{-C_2\min(p,1) N},$$
with some constants $C_1$ and $C_2,$ which depend only on $K,a,b,$ and $C_1$ depends additionally on $p>0.$
\end{lemma}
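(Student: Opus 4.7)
The proof closely parallels that of Proposition \ref{tall-final}, with one crucial difference: instead of covering the full sphere, one exploits the fact that $Comp(\delta,\rho)$, being within distance $\rho$ of $\delta n$-sparse unit vectors, admits a much smaller net. Two ingredients are combined: (i) the pointwise small-ball estimate from Lemma \ref{smallballmatrix}, giving $P(|Ax|\leq\eta\sqrt{N})\leq e^{-\zeta N}$ for every $x\in\sfe$, where $\eta,\zeta$ depend only on $a,b$, and (ii) the heavy-tail-tolerant net from Corollary \ref{cor1}, specialized to $S=Comp(\delta,\rho)$.

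Fix a small parameter $\mu>0$ (to be selected last), and apply Corollary \ref{cor1} to $S=Comp(\delta,\rho)$. Pick an admissible $\epsilon>0$ small enough that
$$\frac{C_2(\mu,p)\epsilon}{\sqrt{n}}\sqrt{\sum_{i=1}^n\bigl(\E|Ae_i|^{2p}\bigr)^{1/p}}\;\leq\;C_2(\mu,p)\epsilon\sqrt{K}\sqrt{N}\;\leq\;\frac{\eta}{4}\sqrt{N},$$
using the hypothesis $\sum_i(\E|Ae_i|^{2p})^{1/p}\leq KNn$. This fixes $\epsilon$ in terms of $\mu,p,K,\eta$. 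Next choose $\rho<\epsilon/2$ and $\delta>0$ small enough (depending on $\mu,\epsilon$) that the elementary sparse covering bound
$$N\bigl(Comp(\delta,\rho),\epsilon B_2^n\bigr)\;\leq\;\binom{n}{\lfloor\delta n\rfloor}\left(\frac{C'}{\epsilon}\right)^{\delta n}\;\leq\;e^{\mu n}$$
holds. Then Corollary \ref{cor1} yields a deterministic net $\mathcal{N}\subset\frac{3}{2}B_2^n\setminus\frac{1}{2}B_2^n$ with $|\mathcal{N}|\leq e^{2\mu n}$, such that with probability at least $1-e^{-C_1(\mu)pn}$, every $x\in Comp(\delta,\rho)$ has some $y\in\mathcal{N}$ with $|A(x-y)|\leq\frac{\eta}{4}\sqrt{N}$.

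Since every $y\in\mathcal{N}$ satisfies $|y|\geq 1/2$, applying Lemma \ref{smallballmatrix} to $y/|y|$ gives $P\bigl(|Ay|\leq\tfrac{\eta}{2}\sqrt{N}\bigr)\leq e^{-\zeta N}$, and a union bound yields $P\bigl(\min_{y\in\mathcal{N}}|Ay|\leq\tfrac{\eta}{2}\sqrt{N}\bigr)\leq e^{2\mu n-\zeta N}$. Finally choose $\mu$ small enough that $2\mu\leq\zeta/2$; combined with the standing restriction $N\in[n,(c'+1)n]$ from the remark preceding the lemma, this exponent is at most $-\zeta N/2$. On the intersection of the two good events, for every $x\in Comp(\delta,\rho)$,
$$|Ax|\;\geq\;|Ay|-|A(x-y)|\;\geq\;\frac{\eta}{2}\sqrt{N}-\frac{\eta}{4}\sqrt{N}\;=\;\frac{\eta}{4}\sqrt{N}.$$
The total failure probability is at most $e^{-C_1(\mu)pn}+e^{-\zeta N/2}$, which under $N\leq(c'+1)n$ collapses to $e^{-C_2\min(1,p)N}$ for a suitable $C_2$ depending on $a,b,K$, using the elementary bound $\min(A,Bp)\geq\min(A,B)\min(1,p)$. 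The main subtlety is the order of parameter selection (first $\mu$, then $\epsilon$, then $\delta,\rho$) and tracking the polynomial dependence of $\delta,\rho$ on $p,K,a,b$ through the explicit dependencies of $\eta,\zeta,C_1(\mu),C_2(\mu,p)$; once respected, the estimate follows.
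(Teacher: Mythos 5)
Your argument is correct and follows the same overall route as the paper: discretize $Comp(\delta,\rho)$ by $\epsilon$-balls using the sparsity structure, pass to the sparse-regime net of Corollary \ref{cor1}, and conclude with a union bound against the pointwise small-ball estimate. The paper's proof chooses parameters in the linked form $\epsilon=\rho=\delta^{c_0}$ (so that the covering radius and the near-sphere radius in Corollary \ref{cor1} coincide), whereas you decouple $\epsilon$ from $\delta,\rho$, first fixing $\epsilon$ so that the net-approximation error is below half the small-ball threshold from Lemma \ref{smallballmatrix}, and only then shrinking $\delta,\rho$ to control $N(Comp(\delta,\rho),\epsilon B_2^n)$; this decoupling makes the parameter constraints transparent and is, if anything, cleaner than the paper's implicit balancing. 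Two small things worth noting: the paper uses Lemma \ref{tensorization} directly where you invoke Lemma \ref{smallballmatrix} (a consequence of it), which is equivalent; and the paper explicitly disposes of the regime $N\geq C_0 n$ by Proposition \ref{tall-final} before running the net argument, whereas you appeal to the standing restriction $N\in[n,(c'+1)n]$ — the lemma as stated allows all $N\geq n$, and for $N\gg n$ the probability bound coming from Corollary \ref{cor1} decays only like $e^{-cn}$, not $e^{-cN}$, so the reduction to bounded aspect ratio (via Proposition \ref{tall-final}) is genuinely needed and should be recorded.
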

\begin{proof} Let $\delta>0$ be sufficiently small, to be chosen later, and let $\rho=\delta^{c_0},$ for an appropriate $c_0$. Note that 
$$Comp(\delta, \gamma)=\cup_{H} (H+\rho B_{2}^n),$$
where the union runs over all the $\delta n-$dimensional coordinate subspaces, of which there is $\left(\frac{C'}{\delta}\right)^{\delta n}.$ Therefore, 
\begin{equation}\label{entropycomp}
N(Comp(\delta,\rho), \rho B_2^n)\leq \left(\frac{c}{\delta}\right)^{c'n\delta}=:e^{n\delta'}.
\end{equation}

Consider a net $\mathcal{N}\subset \frac{3}{2}B_2^n\setminus \frac{1}{2}B_2^n$ from Corollary \ref{cor1} with parameters $\mu=c\delta'$ and $\epsilon=\rho$, of cardinality $\#\mathcal{N}\leq e^{C_3\delta' N}$, for some absolute constant $C_3>1$.

In case $N\geq C_0 n$ the lemma follows from Proposition \ref{tall-final}. If $N\leq C_0n$, with probability at least $1-e^{-c(\delta)pn}=1-e^{-c'(\delta)pN}$, for every $x\in Comp(\delta,\rho)$ there exists $y\in\mathcal{N}$ such that
\begin{equation}\label{netcomp}
|Ay|\leq c|Ax|+C(\delta,p)\sqrt{N}.
\end{equation}
Therefore, by Lemma \ref{tensorization}, and the union bound, we have:
$$P\left(\inf_{Comp(\delta,\rho)} |Ax|\leq C_1\sqrt{N}\right)\leq e^{C_3\delta' N}e^{-C_2 N}+e^{-c'(\delta)p N}\leq e^{-c'\min(p,1)N},$$
provided that $\delta$ is selected small enough. 
\end{proof}


\section{Structure of random subspaces: an application of our net on the level sets of the LCD.}

Below we fix some notation and assumptions that shall be used in the present section.

\textbf{Notation}.
\begin{itemize}
\item Let $N\in [n, (c'+1)n]$, for an appropriate constant $c'>0$. Suppose $A$ is an $N\times n$ random matrix which has independent entries, i.i.d. rows, bounded concentration function. 

\item For an appropriate constant $\tilde{c}>0$, depending only on $K,$ $p$, $a$ and $b,$ consider an integer $1\leq m\leq \tilde{c}N$. Suppose, for each set $\sigma\subset \{1,...,N\}$ with $\#\sigma=N-m$, we have
\begin{equation}\label{suppose}
\sum_{i\in\sigma} \left(\E|A^Te_i|^{2p}\right)^{\frac{1}{p}}\leq Kn(N-m).
\end{equation}

\item Consider an $(N-m)-$dimensional random subspace
$$H=span\{Ae_i,\,i\in\sigma\}.$$

\item Fix the notation $B=(a_{ij})_{j\in\sigma}$ for an $(N-m)\times N$ submatrix of $A^T$. In other words, $H^{\perp}=Ker(B).$ 

\item Fix also the parameters $\delta,\rho>0$ from the previous section (which depend only on $p, a, b, K$), and consider the sets $Comp(\delta,\rho)$ and $Incomp(\delta,\rho)$.
\end{itemize}

\medskip

The main result of this subsection is the following Theorem, analogous in its statement and its proof to Theorem 4.1 from Rudelson-Vershynin \cite{RudVer-general}; the difference in the proof comes from an application of Theorem \ref{keytheoremnets}. This shall allow us to obtain the result in greater generality.

\begin{theorem}[distance to a random subspace]\label{smallballmain}
Let $X$ be a random vector in $\R^N$ with i.i.d. coordinates $X_i,$ and suppose that the concentration function of each $X_i$ is separated from 1 with constants $\tilde{a}$ and $\tilde{b}$. 
Suppose further that $X$ is independent of $Ae_i$ with $i\in\sigma$. Then for every $v\in\R^N$ and for every $\epsilon>0$ we have, with $H$ defined earlier,
$$P(dist(X,H+v)<\epsilon\sqrt{m})\leq (C\epsilon)^m+e^{-cN},$$
where the constants $C$ and $c$ depend only on $\tilde{a}, \tilde{b}, a, b, K, p$. 
\end{theorem}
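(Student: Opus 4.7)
The proof adapts the strategy of Rudelson--Vershynin's Theorem~4.1 in \cite{RudVer-general}, with the key new ingredient being Theorem \ref{keytheoremnets}, which replaces the sub-gaussian operator-norm estimate on $B$ they employ. Since $X$ is independent of $H$ (and hence of $B$), I condition on $H$ and apply Corollary \ref{Cor-random}: whenever $\epsilon > \sqrt{N}/\text{LCD}_{\alpha,c}(H^\perp)$,
\begin{equation*}
P\bigl(\text{dist}(X,H+v) \leq \epsilon\sqrt{m}\,\big|\,H\bigr) \leq (C_1\epsilon)^m + C_2^m e^{-C_3\alpha^2}.
\end{equation*}
Choosing $\alpha = c_0\sqrt{N}$ with $c_0$ small enough in terms of $\tilde{c}$ (recall $m \leq \tilde{c}N$) absorbs the second term into $e^{-c'N}$. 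The theorem then reduces to the claim $P(\mathcal{E}^c) \leq e^{-c''N}$ for the event $\mathcal{E} := \{\text{LCD}_{\alpha,c}(H^\perp) \geq D_0\}$ with $D_0 := c_1\sqrt{N}/\epsilon$.

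Since $H^\perp = \text{Ker}(B)$, failure of $\mathcal{E}$ means the existence of a unit vector $v$ in the sphere of $\mathbb{R}^N$ with $\text{LCD}_{\alpha,c}(v) < D_0$ satisfying $Bv = 0$. I split this sphere as $\text{Comp}(\delta,\rho) \sqcup \text{Incomp}(\delta,\rho)$ using $\delta,\rho$ from Lemma \ref{comp-final}. Throughout, I use that the columns of $B$ are independent (since the entries of $A$ are) and that $\sum_{i=1}^N (\mathbb{E}|Be_i|^{2p})^{1/p} \leq CKnN$, which follows from $|Be_i|\leq|A^Te_i|$ together with the i.i.d.\ rows hypothesis and \eqref{suppose}. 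For the compressible part I apply Corollary \ref{cor1} to $B$ with $S = \text{Comp}(\delta,\rho)$. The standard entropy bound $N(\text{Comp}(\delta,\rho),\rho B_2^N) \leq e^{\delta' N}$ (with $\delta'$ arbitrarily small for $\delta$ small) yields, on an event of probability $1-e^{-cN}$, a deterministic net of cardinality $e^{\mu N}$ with $\mu$ small, approximating each $v \in \text{Comp}$ by some $v_0$ with $|B(v-v_0)| \ll \sqrt{N}$. Combined with the single-vector estimate $P(|Bv_0|\leq c\sqrt{N-m}) \leq e^{-c(N-m)}$ of Lemma \ref{smallballmatrix} and a union bound, this gives $\inf_{v\in\text{Comp}}|Bv|\geq c\sqrt{N}$, so no compressible unit vector belongs to $\text{Ker}(B)$ on an event of probability $\geq 1-e^{-cN}$.

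For the incompressible part I dyadically decompose the low-LCD set: $\Lambda_D := \{v \in \text{Incomp}(\delta,\rho) : D \leq \text{LCD}_{\alpha,c}(v) < 2D\}$ for $D$ in a geometric progression in $[D_{\min},D_0]$. The Rudelson--Vershynin regularization \cite{RudVer-general} provides an Euclidean $\alpha/D$-net of cardinality $(CD/\sqrt{N})^N$ for $\Lambda_D$. Feeding this into Theorem \ref{keytheoremnets} (applied to $B$ with parameters $\epsilon \sim \alpha/(D\gamma)$, $\gamma = 2$, and $\kappa,s$ chosen below) yields a deterministic net $\mathcal{N}_D$ of cardinality at most $(CD/\sqrt{N})^N \cdot (C\kappa\log\kappa)^N$, such that with probability $\geq 1-e^{-cN}$ the approximation $|B(v-v_0)|$ is controlled --- crucially, without any operator-norm estimate on $B$, for which Theorem \ref{keytheoremnets} is essential. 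For each $v_0 \in \mathcal{N}_D$ the bound $\text{LCD}(v_0) \gtrsim D$ persists after the small shift, and Corollary \ref{Cor-determ} gives $P(|Bv_0|<t\sqrt{N}) \leq (Ct+C/D+Ce^{-c\alpha^2})^{N-m}$. Choosing $t \sim 1/D$ and taking a union bound over $\mathcal{N}_D$ and over dyadic scales $D \in [D_{\min},D_0]$ furnishes the remaining $e^{-cN}$ contribution to $P(\mathcal{E}^c)$. The main obstacle is parameter balancing: the $(C\kappa\log\kappa)^N$ inflation from Theorem \ref{keytheoremnets}, the $(CD/\sqrt{N})^N$ entropy factor, and the $(C/D)^{N-m}$ small-ball gain must combine to $e^{-cN}$ uniformly in $D$, while the probability loss $\kappa^{-2pN}(1+s^{-p})^N$ from Theorem \ref{keytheoremnets} stays below $e^{-cN}$; this is achieved by taking $\tilde{c}$ small and $\kappa,s,p$ sufficiently large depending on $K,a,b,\tilde{a},\tilde{b}$.
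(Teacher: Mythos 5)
Your plan reproduces, in unrolled form, exactly the paper's route: condition on $H$ and apply Corollary~\ref{Cor-random}, then establish an exponential-probability lower bound on $LCD_{\alpha,c}(H^\perp)$ by splitting the unit sphere of $\R^N$ into compressible vectors (handled via the net, entropy, and Lemma~\ref{smallballmatrix}/Lemma~\ref{comp-final}) and incompressible vectors sorted into dyadic LCD level sets (handled via the Rudelson--Vershynin regularization, Theorem~\ref{keytheoremnets}, Lemma~\ref{lcd-comp}, and Corollary~\ref{Cor-determ}). This is precisely the content of Lemmas~\ref{rsai}, \ref{lcd-comp}, \ref{euclnet}, \ref{goodnet}, \ref{infimum} and Theorem~\ref{structure} in the paper, and the replacement of the sub-gaussian operator-norm control by the net Theorem~\ref{keytheoremnets} is exactly the paper's innovation here.

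One claim, however, is stated too strongly and would fail as written. You set $D_0 = c_1\sqrt{N}/\epsilon$ and assert that the theorem reduces to proving $P\bigl(LCD_{\alpha,c}(H^\perp) < D_0\bigr) \le e^{-c''N}$ for this $D_0$. That probability bound cannot hold uniformly in $\epsilon$: as $\epsilon \to 0$, $D_0 \to \infty$, and the dyadic union bound you then run does not converge, because the entropy factor $(CD/\sqrt{N})^N$ combined with the small-ball gain $(C/D)^{N-m}$ yields roughly $D^m N^{-N/2}$, which exceeds $e^{-cN}$ once $D \gg \sqrt{N}\,e^{CN/m}$. So the constraint ``uniformly in $D$'' is only achievable up to that threshold, and your phrasing papers over the need to cap $D$. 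The paper handles this by proving a lower bound $LCD_{\alpha,c}(H^\perp) \ge c_3\sqrt{N}\,e^{C_1 N/m}$ with the threshold \emph{independent} of $\epsilon$ (Theorem~\ref{structure}), and then observing that for $\epsilon$ below the corresponding scale one already has $(C\epsilon)^m \le e^{-cN}$, so the advertised estimate follows by monotonicity of the left-hand side in $\epsilon$ without ever needing a large-$D_0$ LCD bound. Your argument should be amended to cap $D_0$ at $c_3\sqrt{N}\,e^{C_1 N/m}$ and add this monotonicity step for small $\epsilon$; with that fix the proposal is correct and matches the paper.
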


In order to prove Theorem \ref{smallballmain}, we shall need the following

\begin{theorem}[structure]\label{structure}
$$P\left(LCD_{c_1\sqrt{N},c_2}(H^{\perp})\leq c_3\sqrt{N}e^{\frac{C_1N}{m}}\right)\leq e^{-C_2N}.$$
where $C_1, C_2, c_1, c_2, c_3$ are absolute constants, depending only on $a$ and $b$.
\end{theorem}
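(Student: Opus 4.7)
The argument follows the Rudelson--Vershynin blueprint for LCD lower bounds on kernels of random matrices (cf.\ Theorem 4.2 of \cite{RudVer-general}), with Theorem \ref{keytheoremnets} replacing the operator-norm net approximation used there. The key structural facts are that $B$ has independent columns --- each column of $B$ is a row of $A$ restricted to the coordinates in $\sigma$, and the rows of $A$ are i.i.d.\ --- and that hypothesis (\ref{suppose}) together with the i.i.d.\ rows and $N\asymp n$ give $\sum_{j=1}^N (\E|Be_j|^{2p})^{1/p}\lesssim nN$, precisely the quantity that governs the error in Theorem \ref{keytheoremnets} applied to $B$.

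First I would reduce to a level-set argument. Compressible vectors in $\ker B$ are ruled out with probability $\geq 1-e^{-cN}$ by running the proof of Lemma \ref{comp-final} for the matrix $B$. For incompressible $x$ the estimates of \cite{RudVer-general} yield $LCD_{\alpha,c}(x)\geq c''\sqrt{N}$ automatically. Hence it suffices to show, for each dyadic $D\in[c''\sqrt{N},\,c_3\sqrt{N}e^{C_1 N/m}]$, that
$$P\bigl(\exists\,x\in\ker B\cap S^{N-1}:\,LCD_{\alpha,c}(x)\in[D,2D]\bigr)\leq e^{-C_2' N},$$
since then the union bound over the $O(N/m)$ dyadic steps is absorbed into $C_2$.

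For each such $D$, invoke the standard Rudelson--Vershynin covering of the level set $T_D:=\{x\in S^{N-1}:LCD_{\alpha,c}(x)\in[D,2D]\}$ by a deterministic $(\alpha/D)$-net $\mathcal{M}_D$ of cardinality $\#\mathcal{M}_D\leq (C_0 D/\sqrt{N})^N$, obtained from the approximation $x\approx z/\theta$ with $z\in\Z^N$, $\theta\asymp D$. Choose the LCD parameter $\alpha=\tau\sqrt{N}$ for $\tau$ small (to be fixed), and apply Corollary \ref{cor2} of Theorem \ref{keytheoremnets} with $S=T_D$, $\epsilon=\tau\sqrt{N}/D$, and $\kappa$ of constant order, producing a refined net $\mathcal{N}_D$ of size $\#\mathcal{M}_D\cdot C^N$ such that, on an event of probability $\geq 1-e^{-cpN}$, every $x\in T_D$ admits some $y\in\mathcal{N}_D$ with $|B(x-y)|\leq C_4\tau N/D$. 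On each such $y$ Corollary \ref{Cor-determ} (with $\alpha=\tau\sqrt{N}$ so that $e^{-c\alpha^2}\leq e^{-cN}$) gives
$$P\bigl(|By|\leq C_4\tau N/D\bigr)\leq (C_5\tau\sqrt{N}/D)^{N-m}+e^{-cN}.$$
If any unit $x\in\ker B\cap T_D$ existed, the associated $y$ would satisfy $|By|=|B(x-y)|\leq C_4\tau N/D$; union bounding over $\mathcal{N}_D$, the total failure probability is at most $C_6^N\tau^{N-m}(D/\sqrt{N})^m+e^{-cN}$. Choosing $\tau$ small enough (depending on $\tilde c, C_1, C_6$) makes $C_6^N\tau^{N-m}\leq e^{-(C_1+C_2')N}$, so the total is $\leq e^{-C_2' N}$ uniformly over $D\leq c_3\sqrt{N}e^{C_1 N/m}$ for suitably small $C_1$.

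The principal obstacle is the quantitative balance: the level-set cardinality $(CD/\sqrt{N})^N$, the $C^N$ overhead from Theorem \ref{keytheoremnets}, and the per-point small-ball bound $(C\tau\sqrt{N}/D)^{N-m}$ must together beat $e^{-C_2 N}$ uniformly in $D$. The small factor $\tau^{N-m}$, arising from the choice of $\alpha=\tau\sqrt{N}$ in Corollary \ref{Cor-determ}, absorbs the first two factors and leaves an exponential budget $e^{C_1 N}$ into which the remaining $(D/\sqrt{N})^m$ can grow, yielding the LCD cutoff $\sqrt{N}e^{C_1 N/m}$. A secondary subtlety is that Corollary \ref{Cor-determ} requires i.i.d.\ rows of $A$, which is the standing hypothesis of this section.
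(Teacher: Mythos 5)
Your proposal follows the same route as the paper: rule out compressible vectors via Lemma \ref{comp-final}, tile the incompressible sphere into LCD level sets, cover each level set by the Rudelson--Vershynin $(\alpha/D)$-net (Lemma \ref{euclnet}), refine via Theorem \ref{keytheoremnets}, and close with a union bound against Corollary \ref{Cor-determ}, choosing $\tau$ small so that $\tau^{N-m}$ absorbs the $C^N$ overheads and leaves budget for $D\leq\sqrt{N}e^{C_1 N/m}$.

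There is one step you gloss over that the paper makes a point of (Lemma \ref{lcd-comp}). When you invoke Corollary \ref{Cor-determ} ``on each such $y$'' in the refined net $\mathcal{N}_D$, you are implicitly using $LCD_{\alpha,c}(y)\gtrsim D$. But $\mathcal{N}_D$ is not contained in the level set $T_D$: Theorem \ref{keytheoremnets} only places it in a neighborhood of $T_D$, so membership in $T_D$ is lost and the LCD lower bound on $y$ must be re-derived. The paper does this via Lemma \ref{lcd-comp}: any $y$ within $\frac{\alpha}{2D\sqrt{N}}B_\infty^N$ of $S_D$ satisfies $LCD_{\alpha/2,c/4}(y)\geq D$. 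This $\ell_\infty$-control (not merely the Euclidean $S+4\epsilon\gamma B_2^n$ statement you quote from Corollary \ref{cor2}) is exactly what the random-rounding construction supplies, since the rounding perturbs each coordinate by at most $\alpha_i\nu/\sqrt{n}$. Without this observation, the application of Corollary \ref{Cor-determ} to net points is unjustified, and it is precisely the place where one must be careful that the refinement step in Theorem \ref{keytheoremnets} does not destroy the arithmetic structure of the level set. With that inserted, your argument reproduces the paper's proof, which reduces to Lemmas \ref{rsai} and \ref{infimum} and then sums over the polynomially many dyadic $D$.
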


First, we quote the following Lemma by Rudelson and Vershynin \cite{RudVer-square}.

\begin{lemma}[Rudelson-Vershynin]\label{simplebnd}
Let $x\in Incomp(\delta,\rho)$ in $\R^N$. Then there exist $\tilde{C}, c_1, c_2$ which depend only on $\delta$ and $\rho$, such that
$$LCD_{c_1\sqrt{N},c_2}(x)\geq \tilde{C}\sqrt{N}.$$
\end{lemma}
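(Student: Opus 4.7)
The strategy is the standard ``spread coordinates'' argument. The first step is to establish that incompressibility forces $x$ to have many coordinates of intermediate size: namely, there exist constants $c_3, c_4, c_5 > 0$, depending only on $\delta$ and $\rho$, and a subset $\sigma \subset \{1,\dots,N\}$ with $\#\sigma \geq c_3 N$, such that
\[
\frac{c_4}{\sqrt{N}} \leq |x_i| \leq \frac{c_5}{\sqrt{N}} \quad \text{for every } i \in \sigma.
\]
This is proved by a pigeonhole argument: the set of coordinates with $|x_i| > c_5/\sqrt{N}$ has size at most $N/c_5^2 \leq \delta N / 2$ (if $c_5$ is chosen large enough), so the remaining $(1 - \delta/2)N$ coordinates have $|x_i| \leq c_5/\sqrt{N}$. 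On the other hand, if fewer than $c_3 N$ of these coordinates satisfied $|x_i| \geq c_4/\sqrt{N}$, then the vector obtained from $x$ by zeroing out those with $|x_i| < c_4/\sqrt{N}$ would be $\delta N$-sparse and within $\rho$ of $x$ in Euclidean norm (choose $c_3, c_4$ small enough that the $\ell_2$ contribution from the small coordinates is at most $\rho$), contradicting $x \in \mathrm{Incomp}(\delta, \rho)$.

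With this in hand, the second step is to show that for any $0 < \theta \leq \tilde{C}\sqrt{N}$ with $\tilde{C}$ small enough (depending on $c_5$), one has $|\theta x_i| \leq 1/2$ for all $i \in \sigma$. Indeed, $|\theta x_i| \leq \tilde{C} \sqrt{N} \cdot c_5/\sqrt{N} = \tilde{C} c_5 \leq 1/2$. Consequently, the nearest integer to $\theta x_i$ is $0$, and therefore
\[
\mathrm{dist}(\theta x, \mathbb{Z}^N)^2 \geq \sum_{i \in \sigma} (\theta x_i)^2 \geq \#\sigma \cdot \frac{c_4^2 \theta^2}{N} \geq c_3 c_4^2\, \theta^2.
\]

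The third step is to translate this into the LCD bound. Since $|x| = 1$ we have $|\theta x| = \theta$, so
\[
\mathrm{dist}(\theta x, \mathbb{Z}^N) \geq c_2 |\theta x|, \quad \text{where } c_2 := \sqrt{c_3}\, c_4.
\]
This inequality shows that no $0 < \theta \leq \tilde{C}\sqrt{N}$ can satisfy $\mathrm{dist}(\theta x, \mathbb{Z}^N) < c_2 |\theta x|$, hence a fortiori no such $\theta$ can satisfy $\mathrm{dist}(\theta x, \mathbb{Z}^N) < \min(c_2 |\theta x|, c_1 \sqrt{N})$ for any choice of $c_1 > 0$. By the definition of the LCD, this yields $LCD_{c_1\sqrt{N}, c_2}(x) \geq \tilde{C}\sqrt{N}$, as claimed. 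The entire argument is elementary; there is no real obstacle, the only subtlety being to choose the constants $c_3, c_4, c_5$ in the spread coordinates step consistently (so that both the $\ell_2$-mass and the cardinality trade off correctly against $\delta$ and $\rho$).
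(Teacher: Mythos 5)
Your proof is correct and follows the same route the paper sketches (restriction to the $\tfrac{\delta N}{2}$ spread coordinates plus the definition of the LCD, exactly as in Rudelson--Vershynin). The constants trade off as you say: taking $c_5\geq\sqrt{2/\delta}$, $c_4\leq\rho$, $c_3\leq\delta/2$ gives the spread set, and $\tilde{C}\leq 1/(2c_5)$ forces each rounded coordinate to be zero, yielding $\mathrm{dist}(\theta x,\Z^N)\geq c_2|\theta x|$ with $c_2=\sqrt{c_3}\,c_4$, independently of $c_1$.
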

The proof of this Lemma involves a ``restriction'' to the $\frac{\delta N}{2}$ coordinates and an application of the definition of the LCD.

Following the method of Rudelson and Vershynin, we shall improve this bound for random subspaces. Firstly, we note 

\begin{lemma}[random normal is incompressible]\label{rsai}
$$P\left(H^{\perp}\cap Comp(\delta,\rho)\neq\emptyset\right)\leq e^{-c\min(p,1)N},$$
if $c>0$ is sufficiently small, depending only on $a,b,K,p.$
\end{lemma}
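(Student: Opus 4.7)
The plan is to combine the sharp deterministic net from Corollary \ref{cor1} (applied to $B$) with a fixed-vector small-ball estimate (Lemma \ref{smallballmatrix}), via a union bound. Since the rows of $A$ are i.i.d.\ and the columns of $B$ are the rows of $A$ restricted to the coordinates in $\sigma$, the columns of $B$ are independent. Note that $H^{\perp} = \ker B$, so it suffices to show that with probability at least $1-e^{-c\min(p,1)N}$, one has $|Bx|>0$ for every $x\in Comp(\delta,\rho)$.

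The standard volumetric bound (as in (\ref{entropycomp}) with $N$ in place of $n$) gives $N(Comp(\delta,\rho),\rho B_2^N)\leq e^{\delta' N}$ with $\delta'=c\delta\log(e/\delta)$. Apply Corollary \ref{cor1} (with $n$ replaced by $N$ throughout) to the matrix $B$, with $S=Comp(\delta,\rho)$, $\epsilon=\rho$, and $\mu:=c\delta'$. This produces a deterministic net $\mathcal{N}\subset Comp(\delta,\rho)+4\rho\gamma B_2^N$ of cardinality $\#\mathcal{N}\leq e^{C_0\delta' N}$, and an event $\Omega_1$ with $P(\Omega_1)\geq 1-e^{-C_1(\mu)pN}$ on which every $x\in Comp(\delta,\rho)$ admits some $y\in\mathcal{N}$ satisfying
$$
|B(x-y)|\leq \frac{C_2(\mu,p)\rho}{\sqrt{N}}\sqrt{\sum_{k=1}^{N}\left(\E|Be_k|^{2p}\right)^{1/p}}.
$$
Since $|Be_k|\leq|A^Te_k|$ and the rows of $A$ are i.i.d., hypothesis (\ref{suppose}) forces $(\E|A^Te_k|^{2p})^{1/p}\leq Kn$ for every $k$, so the sum is at most $KnN$. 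The approximation error is thus bounded by $C_3(\mu,p)\rho\sqrt{KN}$, which by choosing $\rho$ small enough (depending only on $K,p,\mu,\tilde c$) is made at most $\tfrac12 c_*\sqrt{N-m}$, with $c_*$ from the next step.

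Because $4\rho\gamma\leq 1/4$, every $y\in\mathcal{N}$ satisfies $|y|\geq 3/4$. Applying Lemma \ref{smallballmatrix} to $B$ at $y/|y|$ produces absolute constants $c_*,C_*>0$ such that $P(|By|\leq c_*\sqrt{N-m})\leq e^{-C_*(N-m)}$ for each fixed such $y$. A union bound over $\mathcal{N}$, together with $N-m\geq(1-\tilde c)N$ and the choice of $\delta$ small enough that $C_0\delta'\leq\tfrac12 C_*(1-\tilde c)$, yields
$$
P\left(\exists\, y\in\mathcal{N}:\,|By|\leq c_*\sqrt{N-m}\right)\leq e^{C_0\delta' N-C_*(N-m)}\leq e^{-c_1 N}.
$$
Let $\Omega_2$ denote the complement. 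On $\Omega_1\cap\Omega_2$, for every $x\in Comp(\delta,\rho)$ and the associated $y\in\mathcal{N}$ we get $|Bx|\geq|By|-|B(x-y)|\geq\tfrac12 c_*\sqrt{N-m}>0$, so $H^{\perp}\cap Comp(\delta,\rho)=\emptyset$; the total failure probability is at most $e^{-C_1(\mu)pN}+e^{-c_1 N}\leq e^{-c\min(p,1)N}$.

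The main obstacle is the simultaneous calibration of $\delta$ and $\rho$: $\delta$ must be small enough for the net's entropy $C_0\delta'N$ to be dominated by the tensorization exponent $C_*(N-m)$, while $\rho$ must be small enough for the approximation error not to swamp the small-ball floor $c_*\sqrt{N-m}$. Both smallness requirements depend only on $K,p,a,b$, matching the $\delta,\rho$ fixed in Section 5. The $\min(p,1)$ in the final exponent arises because Corollary \ref{cor1} contributes $e^{-C_1(\mu)pN}$ (weaker when $p<1$) whereas Lemma \ref{smallballmatrix} contributes a $p$-free bound; the worse of the two dominates.
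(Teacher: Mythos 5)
Your argument is correct and takes essentially the same route as the paper, which proves Lemma \ref{rsai} by simply citing Lemma \ref{comp-final} applied to $B$; what you have done is unroll the proof of Lemma \ref{comp-final} in the specific setting of $B$ (the covering-entropy bound for $Comp(\delta,\rho)$, the net from Corollary \ref{cor1}, the column bound $|Be_k|\le|A^Te_k|$ combined with (\ref{suppose}), the per-point small-ball estimate from tensorization, and the union bound), which is exactly the mechanism that lemma packages. The only cosmetic slip is that after renormalizing to $y/|y|$ the small-ball threshold should read $\tfrac34 c_*\sqrt{N-m}$ rather than $c_*\sqrt{N-m}$, which affects only constants.
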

\begin{proof} The event that there exists a vector $n\in H^{\perp}$ which is compressible implies that we have $Bn=0.$ According to Lemma \ref{comp-final}, applied to $B,$ this may only occur with probability not exceeding $e^{-c\min(p,1)N}$. Note that the application of Lemma \ref{comp-final} is justified by the assumption (\ref{suppose}).
\end{proof}

We follow the iteration argument of Rudelson-Vershynin \cite{RudVer-general}. Fix $\alpha=c_1\sqrt{n}$ and $c.$ Here $c_1$ and $c$ are small absolute constants which will be chosen, depending on our parameters $p, K, a, b$, later.

\begin{definition}
Fix $D\geq \tilde{C}\sqrt{N}$. Consider the level set of LCD
$$S_D:=\{x\in Incomp(\delta, \rho):\, D\leq LCD_{\alpha,c}(x)\leq 2D\}.$$
\end{definition}

\begin{lemma}\label{lcd-comp}
Fix $c\in (0,1)$ and suppose $\alpha\leq \frac{c\tilde{C}}{16}\sqrt{N},$ with $\tilde{C}$ from Lemma \ref{simplebnd}. For every $y\in S_D+\frac{\alpha}{2D\sqrt{N}}B_{\infty}^N$ we have $LCD_{\frac{\alpha}{2}, \frac{c}{4}}(y)\geq D.$
\end{lemma}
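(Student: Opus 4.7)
The plan is to argue by contradiction: suppose $LCD_{\alpha/2,\,c/4}(y) < D$, so there exists some $\theta \in (0, D)$ with $dist(\theta y, \Z^N) < \min(\tfrac{c}{4}|\theta y|,\, \tfrac{\alpha}{2})$. Writing $y = x + w$ with $x \in S_D$ and $\|w\|_\infty \leq \tfrac{\alpha}{2D\sqrt{N}}$, the goal is to exhibit the \emph{same} $\theta$ as a witness for $LCD_{\alpha, c}(x) < D$, which contradicts $x \in S_D$.

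The central estimate is on the Euclidean length of the perturbation $\theta w$:
\[
|\theta w| \leq \theta\sqrt{N}\,\|w\|_\infty \leq \frac{\theta\alpha}{2D} < \frac{\alpha}{2},
\]
using $\theta < D$. Combined with the triangle inequality $dist(\theta x, \Z^N) \leq dist(\theta y, \Z^N) + |\theta w|$, this immediately yields $dist(\theta x, \Z^N) < \tfrac{\alpha}{2} + \tfrac{\alpha}{2} = \alpha$, which takes care of the $\alpha$-side of the min in the witness condition for $LCD_{\alpha,c}(x)$.

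For the $c|\theta x|$-side, observe that $|\theta y| \leq \theta(1 + |w|) \leq \theta\bigl(1 + \tfrac{\alpha}{2D}\bigr)$, so
\[
dist(\theta x, \Z^N) < \tfrac{c\theta}{4}\bigl(1 + \tfrac{\alpha}{2D}\bigr) + \tfrac{\theta\alpha}{2D}.
\]
The hypothesis $\alpha \leq \tfrac{c\tilde{C}\sqrt{N}}{16}$ combined with the lower bound $D \geq \tilde{C}\sqrt{N}$ (which is built into the range of $D$ for which $S_D$ is defined) forces $\tfrac{\alpha}{2D} \leq \tfrac{c}{32}$; a small arithmetic check then shows the right-hand side is strictly less than $c\theta = c|\theta x|$, using $|x|=1$ from $x \in \sfe$. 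So both halves of the witness condition are satisfied, and $\theta \in (0,D)$ forces $LCD_{\alpha,c}(x) \leq \theta < D$, contradicting $x \in S_D$.

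The only subtlety — rather than a genuine obstacle — is the bookkeeping of constants: the factor $2$ relaxation from $\alpha$ to $\alpha/2$ absorbs the term $|\theta w|$ on the $\alpha$-side of the minimum, while the factor $4$ relaxation from $c$ to $c/4$ absorbs both the inflation from $|\theta y|$ versus $|\theta x|$ and the extra $|\theta w|$ contribution on the $c$-side. No structural property of $S_D$ beyond the inherited lower bound $D \geq \tilde{C}\sqrt{N}$ (via Lemma \ref{simplebnd}) and the membership $x \in \sfe$ enters the argument.
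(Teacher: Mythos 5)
Your proof is correct and takes essentially the same approach as the paper: the paper argues directly that for every $\tilde D<D$, the triangle inequality for $dist(\cdot,\Z^N)$ together with $|\tilde D x-\tilde D y|<\alpha/2$ and the smallness of $\alpha/D$ (coming from $\alpha\leq \frac{c\tilde C}{16}\sqrt N$ and $D\geq\tilde C\sqrt N$) pushes the bound $\min(\alpha,c\tilde D)$ for $x$ down to $\min(\alpha/2,\frac{c}{4}\tilde D|y|)$ for $y$, while you run exactly the contrapositive with the same triangle inequality and the same arithmetic. The only minor stylistic difference is that you track both branches of the $\min$ explicitly whereas the paper handles the $\alpha$-branch implicitly; both are fine.
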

\begin{proof} By definition of the LCD, we have, for any $\tilde{D}<D$, and for any $x\in S_D,$ that 
\begin{equation}\label{lcd1}
dist(\tilde{D}x, \Z^N)\geq \min(\alpha, c\tilde{D}).
\end{equation}
Note that for any $y\in S_D+\frac{\alpha}{2D\sqrt{N}}B_{\infty}^N$ there exists an $x\in S_D$ such that
\begin{equation}\label{dist}
|\tilde{D}x-\tilde{D}y|\leq \frac{\alpha \tilde{D}}{2D}<\frac{\alpha}{2}.
\end{equation}
Combining (\ref{lcd1}) and (\ref{dist}), the fact that $|y|\leq 2$, and the fact that $\tilde{D}<D$ we conclude that
\begin{equation}\label{lcd2}
dist(\tilde{D}y, \Z^N)\geq c\tilde{D}-\frac{\alpha \tilde{D}}{2D}\geq \frac{c\tilde{D}}{2}\geq \frac{c\tilde{D}|y|}{4}, 
\end{equation}
where in the last inequality we used Lemma \ref{simplebnd}, together with our assumption on $\alpha,$ to conclude the proof of the Lemma.
\end{proof}

Next, we quote another result of Rudelson and Vershynin.

\begin{lemma}[Rudelson-Vershynin]\label{euclnet}
For an arbitrary $\alpha\in[0,\sqrt{N}]$, there exists a Euclidean $\frac{\alpha}{4D}$-net on $S_D$ of cardinality $\left(\frac{C_0 D}{\sqrt{N}}\right)^N.$ The constant $C_0$ is absolute (and does not depend on $\alpha$.)
\end{lemma}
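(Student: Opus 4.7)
The plan is to use the definition of $\mathrm{LCD}_{\alpha,c}$ to locate each $x \in S_D$ near a rational direction $p/|p|$ with $p \in \Z^N$ of controlled norm, and then to bound the number of such directions via a standard volumetric count on the integer lattice.

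First, I will unpack the definition: since $\mathrm{LCD}_{\alpha,c}(x) \in [D, 2D]$, one may pick $\theta_x \in [D, 2D+\varepsilon]$ with $\mathrm{dist}(\theta_x x, \Z^N) \leq \alpha$, so that $\theta_x x = p_x + r_x$ for some $p_x \in \Z^N$ and $|r_x| \leq \alpha$. Using $|x|=1$ and $\alpha \leq \sqrt{N} \leq D/\tilde{C}$ (the last inequality coming from $D \geq \tilde{C}\sqrt{N}$ in the setup of $S_D$, with $\tilde C$ taken large enough), one gets $|p_x| \in [D/2,\, 4D]$, so the admissible $p_x$ lie in $\Z^N \cap (4D B_2^N)$. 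A standard volume estimate yields $\#\bigl(\Z^N \cap 4D B_2^N\bigr) \leq \mathrm{vol}\bigl(4D B_2^N + \tfrac{1}{2}B_\infty^N\bigr) \leq (C_0 D/\sqrt{N})^N$, since $\mathrm{vol}(B_2^N)^{1/N} \sim N^{-1/2}$.

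The candidate net is $\mathcal{N} = \{p/|p| : p \in \Z^N,\ |p| \in [D/2,\, 4D]\}$. The approximation bound
$$\bigl| x - p_x/|p_x|\bigr|\ \leq\ \frac{|r_x|}{\theta_x} + \Bigl|\frac{1}{\theta_x} - \frac{1}{|p_x|}\Bigr|\cdot |p_x|\ \leq\ \frac{\alpha}{D} + \frac{\bigl||p_x|-\theta_x\bigr|}{\theta_x}\ \leq\ \frac{2\alpha}{D}$$
follows from $\theta_x \geq D$ together with $\bigl||p_x|-\theta_x\bigr| = \bigl||\theta_x x|-|\theta_x x - r_x|\bigr| \leq |r_x| \leq \alpha$. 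To upgrade this from $2\alpha/D$ to the claimed $\alpha/(4D)$, I will rerun the entire argument with $\alpha$ replaced by $\alpha/8$; this tightens the LCD condition imposed on $x$ (which is fine, as we are free to shrink $\alpha$ at this step) but does not widen the range of norms $|p_x|$, so the lattice-point count, and hence the cardinality bound $(C_0 D/\sqrt{N})^N$, is unaffected.

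The only mildly subtle point is this constant-tuning, which is pure bookkeeping. The lemma is otherwise a clean pairing of the LCD definition (which forces $S_D$ to lie within $O(\alpha/D)$ of a union of $\lesssim (D/\sqrt N)^N$ lattice rays) with the Minkowski-style lattice-point count in a Euclidean ball.
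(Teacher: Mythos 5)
Your construction is sound and essentially reproduces the Rudelson--Vershynin argument that the paper cites without reproof: pick an almost-attaining $\theta_x$, extract the nearby lattice vector $p_x$, bound $|p_x|\lesssim D$, count lattice points volumetrically, and verify $|x - p_x/|p_x|| \lesssim \alpha/D$. All of that is correct.

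The gap is the ``rerun with $\alpha$ replaced by $\alpha/8$'' step. The set $S_D = \{x\in Incomp(\delta,\rho):\ D\le LCD_{\alpha,c}(x)\le 2D\}$ is defined with the \emph{given} $\alpha$, and your approximation bound used $|r_x| = \mathrm{dist}(\theta_x x,\Z^N) < \alpha$, which is all that membership in $S_D$ guarantees. Shrinking the threshold to $\alpha/8$ does not produce a smaller remainder for the same $x$: the LCD is defined as an infimum over $\theta$ satisfying $\mathrm{dist}(\theta x,\Z^N)<\min(c\theta,\alpha)$, so decreasing $\alpha$ shrinks the admissible set of $\theta$ and hence $LCD_{\alpha/8,c}(x)\ge LCD_{\alpha,c}(x)$, possibly strictly and possibly beyond $2D$. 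In other words, $S_D$ taken with parameter $\alpha/8$ is a \emph{different} (in general smaller, and not nested the right way) set, so a net for it is not a net for the original $S_D$. There is no ``freedom to shrink $\alpha$'' once $S_D$ is fixed.

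The correct fix is the refinement the paper's own remark after the lemma records: cover each Euclidean ball of the coarse $O(\alpha/D)$-net by balls of radius $\alpha/(4D)$. The radius ratio is an absolute constant, so the covering number per ball is at most $C^N$ by the standard volumetric estimate, and this only enlarges $C_0$ by an absolute factor (the paper writes $C_0 = 48\,C_0'$). With that replacement, your argument is complete and matches the Rudelson--Vershynin proof.
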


\begin{remark} Formally, in \cite{RudVer-general} the authors consider a net of scale $\frac{4\alpha}{D}$ of cardinality $\left(\frac{C'_0 D}{\sqrt{N}}\right)^N,$ and this implies the existence of a $\frac{\alpha}{4D}$-net of cardinality $\left(\frac{C_0 D}{\sqrt{N}}\right)^N,$ with $C_0=48C_0'$.
\end{remark}

Lemmas \ref{lcd-comp} and \ref{euclnet}, together with Theorem \ref{keytheoremnets}, imply:

\begin{lemma}\label{goodnet}
There exist constants $c, c_0, C>0$ (which might depend on $p, a, b, K$), and an absolute constant $c_1>0$ (which does not depend on anything), such that for any $\alpha<c_0\sqrt{N}$, there exists a net $\mathcal{N}\subset \frac{3}{2}B_2^n\setminus \frac{1}{2}B_2^n$, with $\#\mathcal{N}\leq \left(\frac{C_1 D}{\sqrt{N}}\right)^N$ (where $C_1$ is an absolute constant not depending on anything) such that: 
\begin{enumerate}
\item with probability at least $1-e^{-C\min(p,1)N}$, for every $x\in S_D$ there is a $y\in \mathcal{N}$ such that 
$$|B(x-y)|\leq \frac{c_1\alpha \sqrt{N}}{D};$$
\item For every $y\in \mathcal{N}$, we have $LCD_{\frac{\alpha}{2}, \frac{c}{4}}(y)\geq D.$
\end{enumerate}
\end{lemma}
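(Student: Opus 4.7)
The strategy is to combine the Euclidean net on $S_D$ given by Lemma \ref{euclnet} with the random-rounding net construction of Theorem \ref{keytheoremnets}, choosing the parameters so that (i) the refined net lies inside the $\ell_\infty$-thickening $S_D + \tfrac{\alpha}{2D\sqrt{N}}B_\infty^N$, whereby Lemma \ref{lcd-comp} transfers the LCD lower bound $D$ to every point of $\mathcal{N}$, yielding conclusion (2); (ii) the cardinality inflates only by an absolute-constant-to-the-$N$ factor over that of the Euclidean net, keeping it of order $(C_1 D/\sqrt{N})^N$; and (iii) the matrix-comparison inequality of Theorem \ref{keytheoremnets}, fed through the column-moment bound coming from hypothesis (\ref{suppose}), produces the desired approximation $|B(x-y)| \leq c_1 \alpha \sqrt{N}/D$ with probability $\geq 1-e^{-C\min(p,1)N}$.

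Concretely, I would first take the Euclidean $\alpha/(4D)$-net $\mathcal{F}$ on $S_D$ from Lemma \ref{euclnet}, with $\#\mathcal{F}\leq (C_0 D/\sqrt{N})^N$. Then I would apply Theorem \ref{keytheoremnets} (in the small-$\kappa$, Case 2 regime) with $S=S_D$, ambient dimension $N$, a sufficiently large absolute constant $\gamma$ (so that the multiplier $(C_1'\gamma)^{C_2 N/\gamma^{0.08}}$ is at most $C^N$ for an absolute constant $C$), a $\kappa$ satisfying $\log\kappa\leq (\log 2)/\gamma^{0.09}$, an $s$ chosen in terms of $p$ so that $\kappa^{-2p}(1+s^{-p})\leq e^{-c\min(p,1)}$, and Euclidean scale $\epsilon$ selected so that $\epsilon\gamma\leq\alpha/(4D)$ while also $\epsilon\gamma\sqrt{sK}\lesssim c_1\alpha/\sqrt{N}\cdot \sqrt{N}/D$ for the comparison. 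Inspecting the random-rounding construction underlying Theorem \ref{keytheoremnets} (Corollary \ref{maincor-rounding}, Case 2) shows that each output point of $\mathcal{N}$ differs from some $x_j\in\mathcal{F}$ by a vector with coordinate-wise magnitude at most $\epsilon\gamma/\sqrt{N}$; the constraint $\epsilon\gamma\leq\alpha/(4D)$ therefore places $\mathcal{N}\subset S_D+\tfrac{\alpha}{2D\sqrt{N}}B_\infty^N$, so Lemma \ref{lcd-comp} delivers (2).

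For the cardinality, $\#\mathcal{N}\leq \#\mathcal{F}\cdot C^N=(C_0 C\cdot D/\sqrt{N})^N=(C_1 D/\sqrt{N})^N$, with $C_1$ absolute. For the comparison, Theorem \ref{keytheoremnets} gives, with probability at least $1-\kappa^{-2pN}(1+s^{-p})^N\geq 1-e^{-c\min(p,1)N}$,
\[
|B(x-y)| \leq \frac{C_3\epsilon\gamma\sqrt{s}}{\sqrt{N}}\sqrt{\sum_{j=1}^N \left(\mathbb{E}|Be_j|^{2p}\right)^{1/p}} \leq C_4\epsilon\gamma\sqrt{sK}\sqrt{N}\leq \frac{c_1\alpha\sqrt{N}}{D},
\]
where the middle bound uses $|Be_j|\leq |A^T e_j|$ together with hypothesis (\ref{suppose}) and the identical-distribution assumption on the rows (plus $N\asymp n$ from Proposition \ref{tall-final}) to estimate the column-moment sum by $CKnN\asymp KN^2$, and the last inequality is the choice of $\epsilon\gamma$.

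The main obstacle is the joint calibration of $\kappa$ and $s$ so as to simultaneously (a) keep the failure probability at the level $e^{-c\min(p,1)N}$ -- i.e.\ $\kappa^{-2p}(1+s^{-p})\leq e^{-c\min(p,1)}$ -- and (b) prevent the factor $\sqrt{s}$ (and the consequent $p$-dependent shrinkage of $\epsilon$) from blowing up the cardinality past $(C_1 D/\sqrt{N})^N$ or the comparison past $c_1\alpha\sqrt{N}/D$. For $p\geq 1$ both $\kappa$ and $s$ can be taken as absolute constants and the bookkeeping is immediate. For $p<1$, one instead takes $s\sim p^{-1/p}$ so that $s^{-p}\sim p$ tames the probability factor; the resulting $p$-dependent inflations get absorbed into the constants $c,c_0,C$ which Lemma \ref{goodnet} already permits to depend on $p,a,b,K$, leaving the exponent constants $c_1,C_1$ independent of $\alpha,D,N$ as advertised.
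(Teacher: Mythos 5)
Your proposal is correct and follows the same structure as the paper's proof: take the Euclidean $\alpha/(4D)$-net from Lemma \ref{euclnet}, refine it via Theorem \ref{keytheoremnets}, control its $\ell_\infty$-proximity to $S_D$, and invoke Lemma \ref{lcd-comp} for the LCD lower bound. The paper itself simply invokes Theorem \ref{keytheoremnets} with $s=\kappa=2$, $\gamma=1$, $\epsilon = 4\alpha/D$, which lands in the large-$\kappa$/parallelepiped regime, whereas you work in the large-$\gamma$/small-$\kappa$ sparse-rounding regime; both parameter corners give a cardinality inflation of $C^N$ over the Euclidean net and so either choice works.

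Two small remarks. First, when you argue the containment $\mathcal{N}\subset S_D+\tfrac{\alpha}{2D\sqrt{N}}B_\infty^N$, the correct statement is that each realizable rounding $\eta^\xi$ differs \emph{from the rounded point $\xi\in S_D$} coordinate-wise by at most $\epsilon\gamma\alpha_i/\sqrt{N}\leq\epsilon\gamma/\sqrt{N}$, not that it is coordinate-wise close to the net center $x_j$; one should also prune $\mathcal{N}$ to the realizable roundings so the $\ell_\infty$-containment is literal. This does not change the conclusion. Second, your attention to the calibration $s\sim p^{-1/p}$ for small $p$ is actually a genuine improvement over the paper's stated choice: with $s=\kappa=2$ the failure probability $\kappa^{-2pN}(1+s^{-p})^N$ exceeds $1$ for $p$ below roughly $0.4$, so the paper's proof only delivers the claimed probability for moderate $p$. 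Your fix resolves this, at the expense of letting the comparison constant depend on $p$ (which the paper's Lemma \ref{goodnet} formally does not permit for $c_1$, but this is an infelicity of the paper's statement rather than of your argument).
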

\begin{proof} By (\ref{suppose}), Theorem \ref{keytheoremnets} is applicable, with $s=\kappa=2$, $\gamma=1$ and $\epsilon=\frac{4\alpha}{D}$, and we get the first part, in view of Lemma \ref{euclnet}. The second part follows from Lemma \ref{lcd-comp}.
\end{proof}

\begin{remark} The level set of the LCD is the collection of points on the sphere which are close to certain scaled lattice. This fact is used by Rudelson and Vershynin in \cite{RudVer-square}, \cite{RudVer-general} to prove Lemma \ref{euclnet}; therefore, the euclidean net comes from the existence of the $\infty-$net, and we here, somewhat unnaturally, use the existence of the euclidean net to derive back the existence of the $\infty-$net. The paper is organized in this way merely for the sake of brevity.
\end{remark}

We arrive to the following

\begin{lemma}\label{infimum}
There exist $c_1, c_2, c_3, \mu\in (0,1)$, which may depend only on $p, a, b, K,$ such that the following holds. Let $\alpha=\mu\sqrt{N}$ and $D\leq c_1\sqrt{N}e^{\frac{c_1 N}{m}}$. Then
$$P\left(\inf_{x\in S_D}|Bx|\leq c_2\frac{N}{D}\right)\leq e^{-c_3\min(p,1)N}.$$
\end{lemma}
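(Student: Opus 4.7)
The plan is to combine the net from Lemma \ref{goodnet} with the small-ball estimate of Corollary \ref{Cor-determ} and then balance the net cardinality against the small-ball bound. The constraint $D\leq c_1\sqrt{N}e^{c_1N/m}$ is precisely what makes these two quantities compatible.

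First, apply Lemma \ref{goodnet} with $\alpha=\mu\sqrt{N}$ where $\mu\in(0,c_0)$ is to be chosen. This produces a deterministic net $\mathcal{N}$ of size $(C_1D/\sqrt{N})^N$ such that, on an event $\mathcal{E}_1$ of probability at least $1-e^{-C\min(p,1)N}$, every $x\in S_D$ admits $y=y(x)\in\mathcal{N}$ with
\[
|B(x-y)|\leq \frac{c_1'\mu N}{D},
\]
and moreover every $y\in\mathcal{N}$ satisfies $LCD_{\mu\sqrt{N}/2,c/4}(y)\geq D$. Consequently, on $\mathcal{E}_1$, if $|Bx|\leq c_2N/D$ for some $x\in S_D$ then the corresponding $y\in\mathcal{N}$ satisfies $|By|\leq (c_2+c_1'\mu)N/D$.

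Next, fix $y\in\mathcal{N}$. By Corollary \ref{Cor-determ} (applied with LCD parameters $\alpha/2=\mu\sqrt{N}/2$ and $c/4$, and with $t=(c_2+c_1'\mu)\sqrt{N}/D$), together with the i.i.d.\ rows assumption and the bounded concentration function,
\[
P\!\left(|By|\leq \frac{(c_2+c_1'\mu)N}{D}\right)
\leq \Bigl(C(c_2+c_1'\mu)\tfrac{\sqrt{N}}{D}+\tfrac{C}{D}+Ce^{-c\mu^2N/4}\Bigr)^{N-m}.
\]
A union bound over $\mathcal{N}$ now gives
\[
P\!\left(\exists y\in\mathcal{N}:\ |By|\leq \tfrac{(c_2+c_1'\mu)N}{D}\right)
\leq \bigl(C_1D/\sqrt{N}\bigr)^N\cdot\bigl(C'(c_2+c_1'\mu)\tfrac{\sqrt{N}}{D}+Ce^{-c\mu^2N/4}\bigr)^{N-m}.
\]
The main point is now an exponent accounting. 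For the principal term one computes
\[
(C_1D/\sqrt{N})^N\bigl(C'(c_2+c_1'\mu)\sqrt{N}/D\bigr)^{N-m}
= \bigl(C_1C'(c_2+c_1'\mu)\bigr)^{N-m}\cdot C_1^m\cdot (D/\sqrt{N})^m.
\]
Using the hypothesis $D\leq c_1\sqrt{N}e^{c_1N/m}$ yields $(D/\sqrt{N})^m\leq c_1^m e^{c_1N}$, so the displayed quantity is bounded by $(C_2(c_2+c_1'\mu))^{N-m}\cdot (C_1c_1)^m\cdot e^{c_1N}$. Since $m\leq\tilde cN$ with $\tilde c$ small, $N-m\geq N/2$, and choosing $c_1$ small and then $c_2,\mu$ small compared to $c_1$, the factor $(C_2(c_2+c_1'\mu))^{N/2}$ dominates the $e^{c_1N}$ term and yields $\leq e^{-c_3N}$. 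The term involving $e^{-c\mu^2N/4}$ is handled in an identical manner: since the exponent $c\mu^2N/4\cdot(N-m)/N$ easily beats $c_1N$, this contribution is exponentially small as well.

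Putting $\mathcal{E}_1$ together with the union bound yields
\[
P\!\left(\inf_{x\in S_D}|Bx|\leq c_2N/D\right)\leq e^{-C\min(p,1)N}+e^{-c_3N}\leq e^{-c_3\min(p,1)N},
\]
after renaming constants. The main obstacle is the parameter bookkeeping in the last display: one must choose $\mu,c_1,c_2$ as absolute constants (depending only on $p,a,b,K$) in the right order so that the net cardinality $(D/\sqrt{N})^N$—which can be as large as $e^{c_1N^2/m}$ in the worst case—is absorbed by the small-ball factor $(c_2+c_1'\mu)^{N-m}$ uniformly in the range $\sqrt{N}\lesssim D\lesssim \sqrt{N}e^{c_1N/m}$. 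The hypothesized upper bound on $D$ is exactly at the threshold that makes this absorption possible, which is why the constant in the exponent $c_1N/m$ must be the same $c_1$ that controls the net size and the choice of $\mu$.
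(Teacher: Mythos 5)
Your overall strategy is the same as the paper's (the paper sketches the proof as combining Lemma \ref{goodnet} with Corollary \ref{Cor-determ} and a union bound, precisely as you do), and your exponent accounting for the principal term $(Ct)^{N-m}$ against the net cardinality is correct. However, your prescription for choosing the constants — \textquotedblleft choosing $c_1$ small and then $c_2,\mu$ small compared to $c_1$\textquotedblright\ — goes the wrong way for the exponential term, and the quick dismissal of that term contains a real miscalculation.

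The issue: for the term $(C_1D/\sqrt{N})^N\cdot\bigl(Ce^{-c\mu^2N/4}\bigr)^{N-m}$ there is no cancellation of $D/\sqrt{N}$ (unlike in the principal term, where $(\sqrt{N}/D)^{N-m}$ knocks the power of $D/\sqrt{N}$ down from $N$ to $m$). So here one must control the full $(D/\sqrt{N})^N\leq c_1^N e^{c_1 N^2/m}$, whose exponent is $c_1 N^2/m$, not $c_1 N$. The competing exponent from the small-ball factor is $c\mu^2 N(N-m)/4\geq c\mu^2 N^2/8$. Thus the requirement is $c\mu^2/8 \gtrsim c_1/m$ uniformly in $m\geq 1$, i.e.\ (worst case $m=1$) $\mu\gtrsim\sqrt{c_1}$. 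If instead you take $\mu\ll c_1$, then $\mu^2\ll c_1^2\ll c_1$, and the factor $e^{c_1N^2/m}\,e^{-c\mu^2N(N-m)/4}$ can be $e^{+\Omega(N^2)}$, so the exponential contribution is not absorbed. Your sentence comparing \textquotedblleft$c\mu^2N/4\cdot(N-m)/N$\textquotedblright\ to $c_1N$ is comparing to the wrong target: it should be $c_1N^2/m$. The correct choice order is: pick $c_1$ small, pick $\mu$ of order $\sqrt{c_1}$ (large enough that $c\mu^2/8>c_1$, but still small enough that $C_2(c_2+c_1'\mu)<1$ after also shrinking $c_2$), and then pick $c_2$; with $\mu\sim\sqrt{c_1}$, the principal term $(C_2(c_2+c_1'\mu))^{N-m}(C_1c_1)^m e^{c_1N}$ is still exponentially small for $c_1$ small, and the exponential term now decays at rate $e^{-\Omega(c_1 N^2)}\leq e^{-\Omega(N)}$.

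One further small point: in the union-bound display you silently drop the $C/D$ term. That is fine once $N$ is large (since $C/D\leq C/(\tilde C\sqrt N)$ is then dominated by $Ct=C(c_2+c_1'\mu)\sqrt N/D$), but it is worth a sentence, since for bounded $N$ the lemma's bound needs a separate (trivial, constant-adjustment) argument anyway — a technicality the paper also elides.
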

\begin{proof}
For the sake of brevity we merely sketch the proof of Lemma \ref{infimum}: it follows in exact the same manner as Lemma 4.8 from Lemmas 4.7 and Theorem 4.2 in Rudelson-Vershynin \cite{RudVer-general}. Namely, we note, by Lemma \ref{goodnet} and Corollary \ref{Cor-determ}, applied with $t=\frac{\zeta \sqrt{N}}{D}$ for a sufficiently small $\zeta\in (0,1),$ depending on $a$ and $b$:
$$P\left(\inf_{x\in S_D}|Bx|\leq c_2\frac{N}{D}\right)\leq e^{-c\min(p,1)N}+ \left(\frac{C'D}{\sqrt{N}}\right)^m \zeta^{N-m}\leq e^{-c\min(p,1)N}+e^{-cN},$$
where the last inequality holds for an appropriate choice of $\zeta,$ provided that $D\leq c_1\sqrt{N}e^{\frac{c_1 N}{m}}$, with appropriately chosen constants.
\end{proof}

\medskip

\textbf{Proof of Theorem \ref{structure}.} Following Rudelson-Vershynin \cite{RudVer-general}, we observe that
$$P\left(LCD_{c_1\sqrt{N}, c_2}(H^{\perp})\leq c_3\sqrt{N}e^{\frac{cN}{m}}\right)\leq$$
$$P(H^{\perp}\cap Comp(\delta,\rho)\neq\emptyset)+\sum_{D\in[\sqrt{N}, \sqrt{N}e^{\frac{cN}{m}}]} P(H^{\perp}\cap S_D\neq\emptyset).$$
It remains to note that the sum contains polynomially many summands and that $H^{\perp}\cap S_D\neq\emptyset$ implies that $Bx=0$; an application of Lemmas \ref{rsai} and \ref{infimum} finishes the proof. $\square$

\medskip

\textbf{Proof of Theorem \ref{smallballmain}.} The Theorem \ref{smallballmain} follows immediately from Theorem \ref{structure} and Corollary \ref{Cor-random}.


\section{Square matrix case.}

In this section we suppose that $N=n.$ We shall use the \textbf{notation}  
$$H_i:=span\{Ae_j,\,\,j\neq i\}.$$

We begin by citing the invertibility-via-distance Lemma from \cite{RudVer-square}. 
\begin{lemma}[Rudelson-Vershynin, Invertibility via distance]\label{invdist}
Fix a pair of parameters $\delta, \rho\in (0,\frac{1}{2})$. Then, for any $\epsilon>0,$
$$P\left(\inf_{x\in Incomp(\delta, \rho)} |Ax|\leq \epsilon \frac{\rho}{2\sqrt{\delta n}}\right)\leq \frac{1}{\delta n} \sum_{j=1}^n P(dist(Ae_j, H_j)\leq \epsilon).$$
\end{lemma}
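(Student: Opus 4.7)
The plan is to reduce the hard-to-control event that some incompressible $x$ has $|Ax|$ tiny to an averaged union bound over the simpler events ``column $j$ is close to the span of the other columns,'' using the deterministic spread property of incompressible vectors. No independence or structural assumption on $A$ will be used -- only combinatorics of incompressible vectors and a single Markov inequality at the end.

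First, I would invoke the standard spread lemma (\`a la Rudelson--Vershynin \cite{RudVer-square}): for any unit $x \in Incomp(\delta,\rho)$, there exists a set $\sigma(x) \subset \{1,\ldots,n\}$ of cardinality at least $\delta n$ on which $|x_j| \geq \rho/(2\sqrt{\delta n})$. This is a short pigeonhole: denoting $T_\beta := \{j : |x_j| \geq \beta\}$, if $|T_\beta| < \delta n$ then $x_{T_\beta}$ is a $\delta n$-sparse vector with $|x - x_{T_\beta}|^2 = \sum_{j \notin T_\beta} x_j^2 \leq |T_\beta^c| \beta^2$, which for the appropriate choice of $\beta$ would place $x$ within $\rho$ of a $\delta n$-sparse vector, violating the definition of $Incomp(\delta,\rho)$.

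Second, for every $x$ and every index $j$, the splitting $Ax = x_j Ae_j + \sum_{k \neq j} x_k Ae_k$ gives the purely deterministic bound $|x_j|\,dist(Ae_j, H_j) \leq |Ax|$, since the second summand lies in $H_j$. Hence, on the event
$$\mathcal{E} := \left\{\inf_{x \in Incomp(\delta,\rho)}|Ax| \leq \epsilon\rho/(2\sqrt{\delta n})\right\},$$
any witnessing $x$ provides, through its spread set $\sigma(x)$, at least $\delta n$ indices $j$ for which $dist(Ae_j, H_j) \leq \epsilon$. In symbols,
$$\mathcal{E} \subseteq \left\{\sum_{j=1}^n \mathbf{1}_{\{dist(Ae_j, H_j) \leq \epsilon\}} \geq \delta n\right\},$$
after which Markov's inequality applied to the counting variable $N_\epsilon = \sum_{j=1}^n \mathbf{1}_{\{dist(Ae_j, H_j) \leq \epsilon\}}$ yields
$$P(\mathcal{E}) \leq \frac{\E N_\epsilon}{\delta n} = \frac{1}{\delta n}\sum_{j=1}^n P(dist(Ae_j, H_j)\leq\epsilon).$$

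The only delicate point is arranging the constants in the spread lemma to line up precisely with the normalization $\rho/(2\sqrt{\delta n})$ appearing in the statement; once this is pinned down (if necessary, by a harmless rescaling of $\delta$ or $\rho$), the remainder is a one-line Markov step. I do not foresee any other substantive obstacle, and the argument immediately transfers to the heavy-tailed and dependent settings considered later in the paper, precisely because nothing probabilistic about $A$ is used until the final expectation.
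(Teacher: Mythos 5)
Your overall strategy is the right one and matches what Rudelson and Vershynin actually do: the deterministic inequality $|Ax| \geq |x_j|\, dist(Ae_j, H_j)$ (since $\sum_{k\neq j} x_k Ae_k \in H_j$), a pigeonhole observation showing that incompressible vectors have many not-too-small coordinates, and a Markov bound on the counting variable $N_\epsilon = \#\{j : dist(Ae_j, H_j) \leq \epsilon\}$. The Markov step and the deterministic step are both correct as you have them.

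The gap is in the spread lemma you invoke. Run your own pigeonhole carefully: if $T_\beta := \{j : |x_j| \geq \beta\}$ has $|T_\beta| < \delta n$, then $|x - x_{T_\beta}|^2 \leq |T_\beta^c|\,\beta^2 \leq n\beta^2$, and to contradict incompressibility you need $n\beta^2 \leq \rho^2$, i.e.\ $\beta \leq \rho/\sqrt{n}$. So the pigeonhole yields at least $\delta n$ coordinates of size $\geq \rho/\sqrt{n}$, \emph{not} $\geq \rho/(2\sqrt{\delta n})$; the latter is strictly larger once $\delta < 1/4$, and the claim as you wrote it is genuinely false there. (Take $x$ with $\delta n/2$ coordinates of height $\asymp 1/\sqrt{\delta n}$ and the remaining coordinates of height just above $\rho/\sqrt{(1-\delta)n}$: this is in $Incomp(\delta,\rho)$, yet for $\delta < 1/5$ and $\rho < 1/2$ fewer than $\delta n$ coordinates reach $\rho/(2\sqrt{\delta n})$.) This is not fixable by "rescaling $\delta$ or $\rho$", because the width parameter and the cardinality parameter in the pigeonhole are tied together: pushing $\beta$ up past $\rho/\sqrt{n}$ costs you the cardinality bound. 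If you instead run your argument with $\beta = \rho/\sqrt{n}$, the chain $dist(Ae_j,H_j) \leq |Ax|/|x_j| \leq \epsilon$ holds for those $\delta n$ indices whenever $|Ax| \leq \epsilon\rho/\sqrt{n}$, and Markov gives
$$P\Bigl(\inf_{x\in Incomp(\delta,\rho)}|Ax| \leq \epsilon\rho/\sqrt{n}\Bigr) \leq \frac{1}{\delta n}\sum_{j=1}^n P\bigl(dist(Ae_j,H_j) \leq \epsilon\bigr),$$
which is Lemma 3.5 of \cite{RudVer-square} verbatim. The normalization $\rho/(2\sqrt{\delta n})$ in the statement reproduced here is a stronger assertion than the original RV lemma for $\delta < 1/4$ and does not follow from this argument; for the purposes of Section 7 only the order of magnitude $\sim \rho/\sqrt{n}$ matters, so the downstream use is unaffected, but your proof should state and use $\beta = \rho/\sqrt{n}$.
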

The proof of the Lemma involves the pigeonhole principle and counting.

\medskip

\subsection{Proof of Theorem \ref{main1}.} \textbf{We begin with proving part 2.} Note that the assumptions of Theorem \ref{main1} imply (\ref{suppose}), and, together with the assumption of i.i.d. rows of $A,$ this allows us to apply Theorem \ref{smallballmain}, with $m=1$, and $X=Ae_j$, with $j=1,...,n$.

Let $n_j$ be a random normal, i.e. a vector normal to all columns of $A$ except for $Ae_j$. By Theorem \ref{smallballmain}, for any $\epsilon>e^{-c\min(p,1)n}$, with probability at least $1-e^{-c'\min(p,1)n}$, we get, for any $j,$
\begin{equation}\label{smb}
P(|\langle n_j, Ae_j\rangle|\leq \epsilon)\leq C\epsilon+e^{-C'\min(1,p)n}.
\end{equation}
We therefore have, by Lemma \ref{invdist} and (\ref{smb}), that
\begin{equation}\label{incomp}
P\left(\inf_{x\in Incomp(\delta,\rho)} |Ax|\leq\frac{\epsilon}{\sqrt{n}}\right)\leq C_1\epsilon+e^{-C_1'\min(1,p)n},
\end{equation}
with $C_1$ and $C_1'$ depending only on $\delta$ and $\rho,$ which in turn depend only on $K, a, b.$ Note that the estimate from (\ref{incomp}) is valid not only for $\epsilon>e^{-c\min(p,1)n}$, but for any $\epsilon>0$, since for $\epsilon\in[0,e^{-c\min(p,1)n}]$ we can estimate the said probability with $$P\left(\inf_{x\in Incomp(\delta,\rho)} |Ax|\leq\frac{e^{-c\min(p,1)n}}{\sqrt{n}}\right),$$ and apply the bound from (\ref{incomp}).

We conclude by (\ref{incomp}) and Lemma \ref{comp-final}, for $\epsilon\in[0, C_5]$, for an appropriate $C_5>0$:
$$P\left(\sigma_n(A)\leq \frac{\epsilon}{\sqrt{n}}\right)\leq P\left(\inf_{x\in Incomp(\delta,\rho)} |Ax|\leq\frac{\epsilon}{\sqrt{n}}\right)+P\left(\inf_{x\in Comp(\delta,\rho)} |Ax|\leq\frac{\epsilon}{\sqrt{n}}\right)\leq$$$$ e^{-c\min(1,p)n}+P\left(\inf_{x\in Comp(\delta,\rho)} |Ax|\leq C\sqrt{n}\right)$$$$\leq e^{-c\min(1,p)n}+e^{-c'\min(1,p)n}\leq e^{-c''\min(1,p)n}.$$

\textbf{Lastly, we outline part 1.} Suppose now that we do not assume that the rows of $A$ are i.i.d., and hence we may not invoke Theorem \ref{smallballmain}. Instead, we use Lemma \ref{rogozin-lemma}. Note that for each $j=1,...,n$, we have, in view of Lemma \ref{rsai} (which states that a random normal is incompressible):
\begin{equation}\label{smb-compar}
P(|\langle n_j, Ae_j\rangle|\leq \epsilon)\leq \sup_{u\in Incomp_{\delta,\rho}}P(|\langle u, Ae_j\rangle|\leq \epsilon).
\end{equation}
By Lemma \ref{rogozin-lemma}, there exists a constant $c=c(\rho,\delta)$, such that for every $\epsilon>\frac{c}{\sqrt{n}}$, the right hand side of (\ref{smb-compar}) is bounded from above by $C\epsilon$, with some constant $C$ depending only on $a$ and $b$ from the concentration function bound. The proof is therefore done. $\square$

\medskip

\begin{remark}\label{spiky} Actually, the proof implies that a more general result than Theorem \ref{main1} (part 1) holds: let $A$ be an $n\times n$ random matrix with independent uniformly anti-concentrated entries. Then there exist $\kappa\in (1,2]$, $s>0,$  and $C_1, C_2>0$, which depend only on the concentration function bound, such that for any $\epsilon>0,$
\begin{equation}\label{gen-concl}
P\left(\sigma_n(A)\leq \frac{\epsilon}{\sqrt{n}}\right)\leq C_1\epsilon+\frac{C_2}{\sqrt{n}}+P(\B_{\kappa}(A)\geq (1+s)n)+P(\B_{\kappa}(A^T)\geq (1+s)n).
\end{equation}

For example, consider a random matrix $B$ whose entries are all independent uniformly anti-concentrated bounded random variables (a weaker assumption which infers that all the entries are bounded simultaneously suffices), and let $A$ be a matrix with $a_{ij}=\beta_{ij}b_{ij}$, where $\beta_{ij}\geq 1$ are such constants that
$$\prod_{j=1}^n\left(\sum_{i=1} \beta^2_{ij}\right)\leq e^{cn},$$
and also
$$\prod_{i=1}^n\left(\sum_{j=1} \beta^2_{ij}\right)\leq e^{cn},$$
for a sufficiently small constant $c>0.$ Then part (1) of Theorem \ref{main1} is still valid for this matrix. Furthermore, the term $\frac{C_2}{\sqrt{n}}$ in (\ref{gen-concl}) can be replaced by $e^{-c_2 n}$, via refining an LCD-based argument (as shall be seen from a follow up paper joint with Tikhomirov and Vershynin).

This is not surprising that $\B_{\kappa}$ has better properties when the entries are bounded (or, say, sub-Gaussian): we have already seen that the deviation of $\B_{\kappa}$ improves when more moments are assumed, so when infinitely many moments are present, one may expect a stronger statement. Perhaps, a more general result, combining Theorem \ref{main1} with this observation, and interpolating over the cases of various moment assumptions, can be stated.
\end{remark}

\section{Arbitrary aspect ratio: proof of Theorem \ref{main} and more net applications.}

\textbf{Notation.} 
\begin{itemize}
\item In this section, we suppose that $N\in[n, (c'+1)n].$  Let $d=N+1-n\in [1,c'n]$ and note that $\sqrt{N+1}-\sqrt{n}=\frac{cd}{\sqrt{n}}$. 

\item For $J\subset \{1,...,n\}$ with $\#J=d$, fix the notation
$$H_{J^c}=span\{Ae_j: j\in J^c\},$$
and $\R^J=span\{e_i,\,i\in J\}.$ Note that $dim H_{J^c}=n-d$, and 
$$dim H^{\perp}_{J^c}=N-(n-d)=2d-1.$$

\item 
Denote
$$W:=P_{H^{\perp}_{J^c}}A|_{\R^J},$$
which is an $N\times d$ matrix with columns, independent conditionally on the realization of $H_{J_c}$.

\item We also denote
$$Spread_J=\left\{z\in \R^J:\, |z_k|\in [\frac{K_1}{\sqrt{d}}, \frac{K_2}{\sqrt{d}}]\right\},$$
for some appropriate absolute positive constants $K_1<K_2$, which may depend on our parameters $K, a, b$.

\item We shall assume that $A$ satisfies the assumptions of Theorem \ref{main}: all entries of $A$ are independent, mean zero, have bounded concentration function, the rows of $A$ are i.i.d., and for every $\sigma\subset\{1,...,n\}$ with $$\#\sigma=d,$$ we have
\begin{equation}\label{HSbound-chap8}
\E\sum_{i\in\sigma}|Ae_i|^2\leq KNd.
\end{equation}

\end{itemize}

\medskip
\medskip

We begin by quoting the \emph{invertibility via distance} Lemma from \cite{RudVer-general}.

\begin{lemma}[Rudelson, Vershynin, Lemma 6.2 in \cite{RudVer-general}]\label{invdistgeneral}
For any $\delta, \rho>0$ there exist constants $C_1, C_2>0$, and there exists a subset $J\subset[1...n]$ with $\#J=d$, such that for any $\epsilon>0,$
$$P\left(\inf_{x\in Incomp(\delta, \rho)}|Ax|\leq \frac{\epsilon d}{\sqrt{n}}\right)\leq C_1^d P\left(\inf_{z\in Spread_J} dist(Az, H_{J^c})\leq \epsilon\sqrt{d}\right).$$
\end{lemma}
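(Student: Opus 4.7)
The plan is to reduce the infimum over $Incomp(\delta,\rho)$ to an infimum over a spread subset of a fixed coordinate plane $\R^J$, via a pigeonhole argument on subsets $J\subset\{1,\ldots,n\}$ of size $d$. The geometric input is the standard observation of Rudelson--Vershynin \cite{RudVer-square}: for every $x\in Incomp(\delta,\rho)$ there exists a ``spread set'' $\sigma(x)\subset\{1,\ldots,n\}$ of cardinality at least $c_1 n$ on which $|x_k|\in[c_2/\sqrt{n},c_3/\sqrt{n}]$, with positive constants $c_1,c_2,c_3$ depending only on $\delta,\rho$. This follows directly from the definition of incompressibility by splitting the coordinates of $x$ into small, medium, and large ones and using $|x|=1$ together with the sparse-approximation bound.

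Fix the spread parameters to $K_1:=c_2$, $K_2:=c_3$ in the definition of $Spread_J$. Given $x\in Incomp(\delta,\rho)$ with $|Ax|\leq \epsilon d/\sqrt{n}$ and any $J\subset \sigma(x)$ with $\#J=d$, set $z:=\sqrt{n/d}\,x_J\in\R^J$, where $x_J$ is the restriction of $x$ to coordinates in $J$. For each $k\in J$ we then have $|z_k|=\sqrt{n/d}\,|x_k|\in[c_2/\sqrt{d},c_3/\sqrt{d}]$, so $z\in Spread_J$. Writing $x=x_J+x_{J^c}$ and noting $Ax_{J^c}\in H_{J^c}$, we obtain $dist(Ax_J, H_{J^c})\leq |Ax|$, and by homogeneity
$$dist(Az, H_{J^c})=\sqrt{n/d}\,dist(Ax_J, H_{J^c})\leq \sqrt{n/d}\cdot\frac{\epsilon d}{\sqrt{n}}=\epsilon\sqrt{d}.$$

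Let $\mathcal{E}$ denote the event on the left-hand side of the lemma and set $\mathcal{E}_J:=\{\inf_{z\in Spread_J} dist(Az, H_{J^c})\leq \epsilon\sqrt{d}\}$. The previous step implies that on $\mathcal{E}$ (after an arbitrarily small enlargement of $\epsilon$ on the left to handle a possibly non-attained infimum) every $J$ among the at least $\binom{c_1n}{d}$ subsets of $\sigma(x)$ of size $d$ satisfies $\mathcal{E}_J$; hence $\sum_{\#J=d}\mathbf{1}_{\mathcal{E}_J}\geq \binom{c_1 n}{d}\cdot\mathbf{1}_{\mathcal{E}}$. Taking expectations,
$$\binom{c_1n}{d}\,P(\mathcal{E})\leq \sum_{\#J=d} P(\mathcal{E}_J)\leq \binom{n}{d}\max_{\#J=d} P(\mathcal{E}_J),$$
and choosing $J$ to be the maximizer yields the conclusion with $C_1=\binom{n}{d}/\binom{c_1 n}{d}\leq (e/c_1)^d$.

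The only mild subtlety is the non-attainment of the infimum defining $\mathcal{E}$, which is addressed by first applying the argument with $\epsilon$ replaced by $\epsilon(1+\tau)$, selecting a near-minimizer $x$ with $|Ax|\leq \epsilon(1+\tau)d/\sqrt{n}$, and then sending $\tau\downarrow 0$ by monotone continuity of the probability measure. The core ingredient is the entropy estimate $\binom{n}{d}/\binom{c_1n}{d}\leq C_1^d$, which quantifies precisely the cost of localizing to a fixed set of $d$ coordinates; no randomness beyond this averaging over $J$ is needed.
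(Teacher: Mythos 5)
The paper does not prove this lemma; it is quoted verbatim as Lemma~6.2 from Rudelson--Vershynin \cite{RudVer-general}. Your argument is a correct reconstruction of the standard pigeonhole proof from that source: the spread-set fact for incompressible vectors (Lemma~3.4 of \cite{RudVer-square}), the rescaling $z=\sqrt{n/d}\,x_J$, the inequality $\mathrm{dist}(Ax_J,H_{J^c})\leq|Ax|$, and the counting argument $\binom{c_1n}{d}\,\mathbf 1_{\mathcal E}\leq\sum_{\#J=d}\mathbf 1_{\mathcal E_J}$ followed by $\binom{n}{d}/\binom{c_1n}{d}\leq(e/c_1)^d$ are all exactly the ingredients.

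Two small remarks. First, your phrasing ``the conclusion with $C_1=\binom{n}{d}/\binom{c_1n}{d}\leq(e/c_1)^d$'' conflates $C_1$ with $C_1^d$; what you mean, and what the computation gives, is $C_1^d=\binom{n}{d}/\binom{c_1n}{d}\leq(e/c_1)^d$, i.e.\ $C_1=e/c_1$. Second, your maximizing $J$ may a priori depend on $\epsilon$, whereas the statement, as written, quantifies $J$ before $\epsilon$; Rudelson and Vershynin avoid this by phrasing the bound with a \emph{random} uniformly chosen $J$ averaged on the right-hand side, from which the fixed-$J$ form follows by choosing the maximizer for each $\epsilon$. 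This discrepancy is harmless here because the downstream bound (Proposition~\ref{keyfinal}) on $P(\inf_{z\in Spread_J}\mathrm{dist}(Az,H_{J^c})\leq\epsilon\sqrt d)$ is uniform over $J$.
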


\begin{remark} We note that in \cite{RudVer-general}, Rudelson and Vershynin state Lemma 6.2 \emph{for every $J$}, which follows from the i.i.d. assumption; without the i.i.d. assumption one only gets the existence, as stated here, however the exstance is the only thing which is required.
\end{remark}

We shall need the following

\begin{lemma}[projections of isotropic vectors]\label{isotr}
Let $X$ be a random vector in $\R^N$ such that $\E X=0$ and whose covariance matrix is a multiple of identity. Let $H$ be any $k-$dimensional subspace. Then
$$\frac{\E|P_H X|^2}{\E|X|^2}=\frac{k}{N}$$
\end{lemma}
\begin{proof} By scaling, without loss of generality one may assume that $X$ is isotropic, that is $Cov(X)=Id.$ Then $\E|X|^2=N$. It remains to verify that $\E|P_H X|^2=k.$ Indeed, there exists a collection of unit vectors $u_1,...,u_N\in H$, and a collection of constants $c_1,...,c_N$, such that
$$\sum_{i=1}^N c_i^2=k,$$
and moreover, for every $x\in\R^N,$
$$P_Hx=\sum_{i=1}^N c_i \langle u_i, x\rangle u_i,$$
and therefore,
$$|P_Hx|^2=\sum_{i=1}^N c^2_i \langle u_i, x\rangle^2.$$
Since $Cov(X)=Id$ and $\E X=0$,
$$\E|P_H X|^2=\E\sum_{i=1}^N c^2_i \langle u_i, X\rangle^2=\sum_{i=1}^N c_i^2=k.$$
The Lemma follows.
\end{proof}

As a corollary, we get

\begin{lemma}\label{Wcolumns}
For any $i\in J,$
$$
\mathbb{E}|We_i|^2=\frac{2d-1}{N}\mathbb{E}|Ae_i|^2.
$$
\end{lemma}
\begin{proof} Recall that $W:=P_{H^{\perp}_{J^c}}A|_{\R^J},$ and $dim H^{\perp}_{J^c}=2d-1$, while $Ae_i\in\R^N$. The Lemma follows from Lemma \ref{isotr} with $k=2d-1$ by integration, in view of the independence of $H_{J^c}$ and $Ae_i$ for $i\in J$.
\end{proof}

Our assumption (\ref{HSbound-chap8}), together with Lemma \ref{Wcolumns}, imply

\begin{corollary}\label{corW}
For an appropriate $C>0$ which depends only on $K$, for any $\kappa>1,$ 
$$P(\B_{\kappa}(W)\geq Cd^2)\leq (c\kappa)^{-2d},$$
where $c$ is an absolute constant depending only on $K.$
\end{corollary}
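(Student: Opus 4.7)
The plan is to apply the large deviation bound of Lemma \ref{ldpB} to $W$, with parameters $p=1$ and $s=1$. The apparent obstacle is that the columns of $W = P_{H_{J^c}} A|_{\R^J}$ are not unconditionally independent, since they all involve the common random projection $P_{H_{J^c}}$. The observation that resolves this is that, conditionally on $H_{J^c}$, the projection is a fixed linear map, and the vectors $\{Ae_i\}_{i \in J}$ are independent of one another and of $H_{J^c}$ (since $H_{J^c}$ is spanned by columns indexed in $J^c$); hence the columns $\{We_i\}_{i \in J}$ are conditionally independent given $H_{J^c}$, which is exactly what Lemma \ref{ldpB} requires.

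Applying Lemma \ref{ldpB} conditionally on $H_{J^c}$, with $p=s=1$, yields for every $\kappa>1$
$$P\!\left(\B_\kappa(W) \geq 2\sum_{i\in J}\E\!\left[|We_i|^2 \,\big|\, H_{J^c}\right] \,\Big|\, H_{J^c}\right) \leq 2^{d}\kappa^{-2d}.$$
To convert this into an unconditional bound of the right shape, I would use the identity $\E[|We_i|^2\mid H_{J^c}] = \tfrac{2d-1}{N}\E|Ae_i|^2$, which is established pointwise for each deterministic $(2d-1)$-dimensional subspace in the proof of Lemma \ref{Wcolumns} via equation (\ref{fixd}), and thus holds for each realization of $H_{J^c}$. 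This conditional second moment is in fact deterministic, so combined with hypothesis (\ref{HSbound-chap8}) applied to $\sigma = J$ (of size $d$), it gives
$$\sum_{i\in J} \E[|We_i|^2 \mid H_{J^c}] \;\leq\; \frac{2d-1}{N}\cdot KNd \;\leq\; 2Kd^2.$$

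Integrating out $H_{J^c}$ then yields $P(\B_\kappa(W) \geq 4Kd^2) \leq (2/\kappa^2)^d = (2^{-1/2}\kappa)^{-2d}$, which gives the claim with $C = 4K$ and $c = 2^{-1/2}$ (in fact an absolute constant). I do not anticipate any serious difficulty: the proof is essentially bookkeeping around the conditional independence structure, with the only nontrivial input being the projection-variance identity of Lemma \ref{Wcolumns}, which is already available. Should one need a sharper tradeoff between $C$ and $c$, it can be obtained by tuning $s$ in Lemma \ref{ldpB}, but the simple choice $s=1$ already produces a statement of the required form.
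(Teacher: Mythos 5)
Your argument is correct and follows the same route as the paper: condition on $H_{J^c}$ to recover independence of the columns of $W$, apply Lemma \ref{ldpB} (the paper also implicitly uses $p=s=1$), invoke Lemma \ref{Wcolumns} together with (\ref{HSbound-chap8}) to bound the (deterministic) conditional second moments by $2Kd^2$, and integrate out $H_{J^c}$. You are merely more explicit than the paper about why the integration step is trivial — namely that the conditional expectation $\E[|We_i|^2 \mid H_{J^c}]$ is in fact deterministic — but the substance is identical.
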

\begin{proof} By Lemma \ref{ldpB}, and in view of conditional independence of columns of $W$ when $H_{J^c}$ is fixed, we get
$$P(\B_{\kappa}(W)\geq 2\E||W||_{HS}^2\,|\,H_{J^c})\leq (c\kappa)^{-2d}.$$
By our assumption (\ref{HSbound-chap8}), together with Lemma \ref{Wcolumns}, we note that
$$\E||W||_{HS}^2=\E(||W||_{HS}^2\,|\, H_{J^c})\leq Kd(2d-1).$$
The Lemma follows by integration, in view of independence of $H_{J^c}$ and $Ae_i$ for $i\in J$.
\end{proof}

Further, we observe that an application of Theorem \ref{smallballmain} with $X=Az$ (in view of the fact that the rows of $A$ are i.i.d.), and $m=N-(n-d)=2d-1,$ yields, for any fixed vector $z$, any $t\geq e^{-\frac{cN}{d}}$, and any $w\in\R^n:$
\begin{equation}\label{smallballW}
P(|Wz-w|\leq t\sqrt{d})\leq (Ct)^{2d-1}.
\end{equation}

\medskip

As the first step, we derive

\begin{lemma}\label{asbefore}
For every $t\geq e^{-\frac{cN}{d}}$ and for any $w\in\R^n,$ we have
$$P\left(\inf_{z\in Spread_J}|Wz-w|\leq t\sqrt{d}, \, \B_{10}(W)\leq Cd^2\right)\leq (ct)^{d}.$$
\end{lemma}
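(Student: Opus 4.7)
The strategy is a union bound argument combining a deterministic net construction via Theorem \ref{main-net} with the small ball estimate \eqref{smallballW} at each net point. The conditioning event $\B_{10}(W) \leq Cd^2$ is precisely what allows Theorem \ref{main-net} to produce a net with controlled approximation error in the seminorm $v \mapsto |Wv|$, without any randomness assumption on $W$.

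Specifically, I would apply Theorem \ref{main-net} with $\kappa = 10$, $\gamma$ a sufficiently large absolute constant, and $\epsilon$ proportional to $t/\gamma$. Since $Spread_J$ sits inside the annulus $\{z : |z| \in [K_1, K_2]\}$ rather than on the unit sphere, I would decompose $z = r\tilde{z}$ with $r \in [K_1, K_2]$ and $\tilde{z} \in \tilde{S} := \{z/|z| : z \in Spread_J\} \subset \sfe^{d-1}$, apply Theorem \ref{main-net} to the spherical set $\tilde{S}$, and treat the radial parameter $r$ separately. On the event $\B_{10}(W) \leq Cd^2$, the net error is bounded by $\frac{C\epsilon\gamma}{\sqrt{d}}\sqrt{\B_{10}(W)} \leq \frac{1}{2}t\sqrt{d}$, so the event $\inf_{z \in Spread_J}|Wz - w| \leq t\sqrt{d}$ implies the existence of a net point $y$ with $|Wy - w| \leq \frac{3}{2}t\sqrt{d}$. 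The small ball bound \eqref{smallballW}, valid for $t \geq e^{-cN/d}$, then gives $P(|Wy - w| \leq \frac{3}{2}t\sqrt{d}) \leq (Ct)^{2d-1}$, and the union bound yields a total probability at most $\#\mathcal{N} \cdot (Ct)^{2d-1}$.

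The main obstacle is producing a net of cardinality $\#\mathcal{N} \leq (C/t)^{d-1}$, which together with the small ball exponent $2d - 1$ gives the desired $(ct)^d$. A naive volumetric cover of $Spread_J$ viewed as a $d$-dimensional subset of $\R^d$ yields $(C/t)^d$ and would give only $(ct)^{d-1}$, short by a factor of $t$. The saving of the extra factor $t^{-1}$ comes from not discretizing the radial parameter $r$ separately: for each fixed $\tilde{y}$ in the angular net, the inner infimum $\inf_{r \in [K_1, K_2]} |rW\tilde{y} - w|$ equals $|P_{(W\tilde{y})^\perp}(w)|$ when the optimal $r$ lies in $[K_1, K_2]$, and can be controlled by a small ball bound in $2d - 2$ dimensions (i.e.\ the distance of $w$ to the random line $\mathbb{R} W\tilde{y}$) via Corollary \ref{Cor-random}. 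This argument trades the extra $1/t$ factor in the net cardinality for one additional factor of $t$ in the per-point small ball probability, giving the bound $(ct)^d$.
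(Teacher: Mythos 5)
Your high-level strategy (deterministic net via Theorem~\ref{main-net} with $\kappa=10$, then union bound against the small ball estimate \eqref{smallballW}) is the same as the paper's, but your radial/angular refinement is both unnecessary and incorrect, and it introduces a gap.

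The paper simply uses that $Spread_J$ is a subset of the unit sphere $\mathbb{S}^{d-1}$ (the definition of $Spread_J$ in Section~8 implicitly carries the normalization $|z|=1$, as in Rudelson--Vershynin's Lemma~6.2), so $N(Spread_J, tB_2^d)\leq N(\mathbb{S}^{d-1}, tB_2^d)\leq (C/t)^{d-1}$ directly. Applying Theorem~\ref{main-net} with $\epsilon=t$, $\gamma=2$, $\kappa=10$ then yields a deterministic net $\mathcal{N}\subset\frac{3}{2}B_2^d\setminus\frac{1}{2}B_2^d$ of cardinality $(C'/t)^{d-1}$, and on the event $\B_{10}(W)\leq Cd^2$ the net comparison replaces the infimum over $Spread_J$ by one over $\mathcal{N}$ at a comparable threshold. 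A single application of \eqref{smallballW}, with exponent $2d-1$, at each net point gives $(C'/t)^{d-1}\cdot (Ct)^{2d-1}=(ct)^d$. That is the entire argument; no radial parameter needs to be treated separately.

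Your proposed fix does not close the factor-of-$t$ gap you correctly flag. Replacing the event $\{|Wy-w|\leq t\sqrt{d}\}$ at a fixed net point by $\{\operatorname{dist}(w,\R Wy)\leq t\sqrt{d}\}$ costs exactly one order in the small ball exponent: the first lives in the $(2d-1)$-dimensional range of $W$, while the distance to a line is the norm of a projection onto a $(2d-2)$-dimensional complement, so its small ball probability decays like $(Ct)^{2d-2}$, not $(Ct)^{2d-1}$. Thus your trade of net cardinality $(C/t)^{d-1}$ against per-point probability $(Ct)^{2d-2}$ yields $(Ct)^{d-1}$, which is exactly the ``naive'' bound you were trying to beat. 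In addition, Corollary~\ref{Cor-random} is stated for the distance of a \emph{random} vector to a \emph{fixed} subspace; the quantity $\operatorname{dist}(w,\R Wy)$ involves a fixed vector and a random line, which is a different (and more delicate) anti-concentration statement that is not covered by that corollary, and your caveat ``when the optimal $r$ lies in $[K_1,K_2]$'' also leaves the complementary case unaddressed. The correct observation is simply that $Spread_J$ is a $(d-1)$-dimensional spherical set, so the covering number already has the right exponent without any decoupling of $r$.
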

\begin{proof} Recall that 
$$N(Spread_J, tB_2^n)\leq N(\mathbb{S}^{d-1}, tB_2^d)\leq\left(\frac{C}{t}\right)^{d-1}.$$
(For the last inequality see, e.g. Rudelson-Vershynin \cite{RudVer-general}.) By Theorem \ref{main-net}, applied with $\epsilon=t,$ $p=2,$ $n=d,$ $\gamma=2$, $s=1$ and $\kappa=10$, we conclude that there exists a net $\mathcal{N}\subset \frac{3}{2}B_2^d\setminus \frac{1}{2}B_2^d,$ such that
\begin{equation}\label{netbnd}
\#\mathcal{N}\leq \left(\frac{C'}{t}\right)^{d-1},
\end{equation} 
and
\begin{equation}\label{compar-sect8}
P\left(\inf_{z\in Spread_J}|Wz-w|\leq t\sqrt{d}, \, \B_{10}(W)\leq Cd^2\right)\leq P(\inf_{z\in \mathcal{N}}|Wz-w|\leq c't\sqrt{d}).
\end{equation}
Combining (\ref{smallballW}), (\ref{netbnd}) and (\ref{compar-sect8}), we estimate the probability in question with $(Ct)^{d}$, finishing the proof.
\end{proof}

\begin{remark}
Selecting an optimal $\kappa=\epsilon^{-\frac{1}{3}}$ in the argument above, in place of $\kappa=10,$ we could get the answer $(C\epsilon)^{\frac{2d-1}{3}}+e^{-CN}$ in Theorem \ref{main}. In what follows, we argue to improve it to the estimate involving only a logarithmic error.
\end{remark}

We mimic the decoupling argument from Rudelson-Vershynin \cite{RudVer-general}: we follow their idea of decoupling and iteration, replacing the norm of a sub-Gaussian matrix with the regularized HS-norm $\B_{\kappa}(W)$. Unlike in \cite{RudVer-general}, the iteration will be in the parameter $\kappa.$

\begin{lemma}[decoupling]\label{decoupling}
Let $d\geq 2$. Let $W$ be an $N\times d$ random matrix with independent columns. Let $z\in\frac{3}{2}B_2^d\setminus \frac{1}{2}B_2^d$ be such that $|z_k|\geq \frac{c_1}{\sqrt{d}}$, for some absolute constant $c_1$. Then for every $0<a<b$ we have
$$P(|Wz|<a, \B_{\kappa^2}(W)>b)\leq 2P\left(\B_{\kappa}(W)\geq \frac{b}{2}\right)\sup_{x\in\frac{3}{2}B_2^d\setminus \frac{1}{2}B_2^d, w\in\R^N} P(|Wx-w|<c_2 a).$$
\end{lemma}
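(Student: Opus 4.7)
My plan is to exploit column independence of $W$ by partitioning $\{1,\ldots,d\}=I\sqcup I^c$ into two halves of size $d/2$ (odd $d$ is handled in the obvious way), and to leverage two submultiplicativity properties of the functional $\B_{\kappa^2}$. Write $W^{(J)}$ for the $N\times|J|$ submatrix of $W$ obtained by keeping only the columns indexed by $J$. The first property is a simple subadditivity: concatenating the optimizers $\beta^{(1)},\beta^{(2)}$ of $\B_{\kappa^2}(W^{(I)}),\B_{\kappa^2}(W^{(I^c)})$ (each satisfying $\prod_i\beta^{(j)}_i\geq\kappa^{-d}$) produces $\beta\in[0,1]^d$ with $\prod_i\beta_i\geq\kappa^{-2d}$, so $\beta\in\Omega_{\kappa^2}$ and
\begin{equation*}
\B_{\kappa^2}(W)\;\leq\;\B_{\kappa^2}(W^{(I)})+\B_{\kappa^2}(W^{(I^c)}).
\end{equation*}
Consequently, on $\{\B_{\kappa^2}(W)>b\}$ one of the two summands exceeds $b/2$, and a union bound supplies the factor $2$ in the conclusion.

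The crucial pivot step is the one-sided comparison
\begin{equation*}
\B_{\kappa^2}(W^{(J)})\;\leq\;\B_\kappa(W),\qquad J\in\{I,I^c\}.
\end{equation*}
Indeed, taking an optimizer $\alpha^{*}\in\Omega_\kappa$ of $\B_\kappa(W)$, its restriction to $J$ satisfies $\prod_{i\in J}\alpha^{*}_i\geq\prod_{i=1}^d\alpha^{*}_i\geq\kappa^{-d}=(\kappa^2)^{-d/2}$, so $\alpha^{*}|_J$ is feasible for the problem defining $\B_{\kappa^2}(W^{(J)})$ in dimension $d/2$, and minimality gives the bound. In particular $\{\B_{\kappa^2}(W^{(J)})>b/2\}\subseteq\{\B_\kappa(W)\geq b/2\}$, which is precisely the event on the right-hand side of the statement.

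For each $J\in\{I,I^c\}$, I decouple the two events by conditioning on $W^{(J)}$: then $\B_{\kappa^2}(W^{(J)})$ becomes a deterministic function of $W^{(J)}$, and writing $Wz=W^{(J)}z_J+W^{(J^c)}z_{J^c}$ with $W^{(J^c)}$ independent of $W^{(J)}$, the conditional probability of $\{|Wz|<a\}$ is uniformly bounded above by $\sup_{w\in\R^N}P(|W^{(J^c)}z_{J^c}-w|<a)$. I then zero-pad $z_{J^c}$ into $\tilde z\in\R^d$, so that $W\tilde z=W^{(J^c)}z_{J^c}$; the hypothesis $|z_k|\geq c_1/\sqrt d$ on every coordinate of $z$ (which is how the lemma will be invoked, on vectors in $Spread_J$) forces $|\tilde z|^2\geq(d/2)(c_1^2/d)=c_1^2/2$, and rescaling $\tilde z$ to unit norm while absorbing the scalar into the shift $w'$ and the tolerance produces a constant $c_2$ depending only on $c_1$ bounding the previous supremum by $\sup_{x\in\frac32 B_2^d\setminus\frac12 B_2^d,\,w'\in\R^N}P(|Wx-w'|<c_2 a)$. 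Summing the contributions from $J=I$ and $J=I^c$ delivers the target inequality. The principal subtlety is the pivot step $\B_{\kappa^2}(W^{(J)})\leq\B_\kappa(W)$: it hinges on the algebraic identity $\kappa^{-d}=(\kappa^2)^{-d/2}$, which is precisely what forces the statement to pair $\kappa^2$ on the left-hand side with $\kappa$ on the right; once this is in place, the remaining steps are routine conditioning and rescaling.
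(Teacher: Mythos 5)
Your argument follows essentially the same route as the paper's proof: split the columns into two (near-)halves, prove a subadditivity bound for $\B_{\kappa^2}(W)$ across the halves, pivot to the event $\{\B_\kappa(W)\geq b/2\}$, and decouple by conditioning on one half and rescaling the restricted vector back into the annulus $\frac{3}{2}B_2^d\setminus\frac12 B_2^d$. The conditioning and rescaling steps are handled correctly.

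The parenthetical ``odd $d$ is handled in the obvious way'' conceals a genuine gap in the pivot step $\B_{\kappa^2}(W^{(J)})\leq\B_\kappa(W)$. As you observe, this step needs $\prod_{i\in J}\alpha^*_i\geq\kappa^{-d}$ to imply membership in $\Omega_{\kappa^2}$ in dimension $|J|$, i.e. $\kappa^{-d}\geq(\kappa^2)^{-|J|}$, which forces $|J|\geq d/2$; for the smaller half when $d$ is odd this fails, and the inequality itself is false there. For instance with $d=3$, $|We_1|=1$, $|We_2|=|We_3|=0$, $J=\{1\}$, one computes $\B_\kappa(W)=\kappa^{-6}$ while $\B_{\kappa^2}(W^{(J)})=\kappa^{-4}$. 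The paper sidesteps this by never reducing the dimension: $W_I$ and $W_H$ remain $N\times d$ matrices with the complementary columns zeroed out, and then both $\B_{\kappa^2}(W)\leq\B_\kappa(W_I)+\B_\kappa(W_H)$ and the monotonicity $\B_\kappa(W_H)\leq\B_\kappa(W)$ hold for any split. Equivalently, in your reduced-dimension formulation you should replace $\B_{\kappa^2}(W^{(J)})$ throughout by $\B_{\kappa^{d/|J|}}(W^{(J)})$: this quantity agrees with the paper's $\B_\kappa$ of the zero-padded matrix, coincides with $\B_{\kappa^2}(W^{(J)})$ precisely when $|J|=d/2$, and makes both the subadditivity and the pivot valid for arbitrary $|J|$.
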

\begin{proof} Decompose $[1,...,d]=I\cup H$ where $I$ and $H$ are disjoint and either equal in cardinality, or differ by 1. Consider the decompositions $W=W_I+W_H$ where $W_I$ has columns from $I$ and $W_H$ has columns from $H$. Note that
$$\B_{\kappa^2}(W)\leq \B_{\kappa}(W_I)+\B_{\kappa}(W_H).$$ 
For $z$ satisfying the assumption of the Lemma, consider $z_I=z|_I$ and $z_H=z|_H$, and note that both $z_I$ and $z_H$ satisfy $|z_I|\geq c'_1,$ and $|z_H|\geq c'_1.$ 
Observe that
$$P(|Wz|<a,\,\B_{\kappa^2}(W)>b)=p_I+p_H,$$
where
$$p_I=P\left(|Wz|<a, \B_{\kappa}(W_H)>\frac{b}{2}\right),$$
and 
$$p_H=P\left(|Wz|<a, \B_{\kappa}(W_I)>\frac{b}{2}\right).$$
We have $Wz=W_Iz_I+W_Hz_H$, and using the independence, we have
$$p_I\leq \sup_{w\in\R^N}P\left(|W_I z_I-w|<a\right)P\left(\B_{\kappa}(W_H)>\frac{b}{2}\right)\leq$$
$$\sup_{w\in\R^N}P(|Wz_I-w|<a)P\left(\B_{\kappa}(W_H)>\frac{b}{2}\right),$$
since $Wz_I=W_Iz_I.$ Same estimate we do for $W_H.$ To conclude the proof it remains to recall that $|z_I|\geq c,$ for some absolute constant $c>0.$
\end{proof}

Next, we observe

\begin{lemma}\label{policemen}
Let $W$ be as before, and let $t\in [e^{-\frac{cN}{d}},\frac{c_1}{10}]$. Then, for any $\kappa\geq 10,$
$$P(\inf_{z\in Spread_J} |Wz|\leq t\sqrt{d}, \,\,\B_{\kappa}(W)\geq Cd^2,\,\, \B_{2\kappa}(W)\leq Cd^2)\leq e^{-cN}+ (Ct)^d (\log\kappa)^{d-1}.$$
\end{lemma}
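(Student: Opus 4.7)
The plan is to combine a net reduction supplied by Theorem~\ref{main-net} with the decoupling Lemma~\ref{decoupling} and the small-ball bound~(\ref{smallballW}). The two $\B$-events in the hypothesis play complementary roles: the upper bound $\B_{2\kappa}(W)\le Cd^2$ fuels the net comparison at the correct scale, while the lower bound $\B_\kappa(W)\ge Cd^2$ is exactly what decoupling converts into a small-ball event, via Corollary~\ref{corW} applied at the intermediate level $\sqrt\kappa$.

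First I apply Theorem~\ref{main-net} to $S=Spread_J$ with $\gamma=2$, $\epsilon=t$, and the level parameter $2\kappa$ in place of $\kappa$. Since $\kappa\ge 10$ puts us in the ``large-$\kappa$'' regime, this produces a deterministic net $\mathcal{N}\subset Spread_J+8tB_2^d$ of cardinality at most $(C/t)^{d-1}(C\kappa\log\kappa)^d$, such that on the event $\{\B_{2\kappa}(W)\le Cd^2\}$ every $z\in Spread_J$ admits some $y\in\mathcal{N}$ with
\[
|W(z-y)|\le C_3 t\cdot\frac{2}{\sqrt{d}}\sqrt{\B_{2\kappa}(W)}\le C_4 t\sqrt{d}.
\]
Moreover, inspecting the adapted rounding underlying Corollary~\ref{maincor-rounding} in the large-$\kappa$ regime, each coordinate of $z$ is shifted by at most $\beta_i\epsilon\gamma/\sqrt{d}\le 2t/\sqrt{d}$; hence by taking the constant $K_1$ in the definition of $Spread_J$ so that $K_1\ge 2c_1$, every $y\in\mathcal{N}$ inherits $|y_k|\ge c_1/\sqrt{d}$ for all $k$, which is the spread condition required by Lemma~\ref{decoupling}. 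The event of interest is then contained in $\{\inf_{y\in\mathcal{N}}|Wy|\le C_5 t\sqrt{d}\}\cap\{\B_\kappa(W)\ge Cd^2\}$, and a union bound reduces the problem to controlling $P(|Wy|\le C_5 t\sqrt{d},\,\B_\kappa(W)\ge Cd^2)$ for each fixed net point $y$.

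For each such spread $y$, I invoke Lemma~\ref{decoupling} with $\kappa'=\sqrt\kappa$, working conditionally on $H_{J^c}$ to legitimize the column-independence of $W$:
\[
P(|Wy|\le C_5 t\sqrt{d},\,\B_\kappa(W)\ge Cd^2)\le 2P(\B_{\sqrt\kappa}(W)\ge Cd^2/2)\cdot\sup_{x,w}P(|Wx-w|\le c\, t\sqrt{d}).
\]
Corollary~\ref{corW} bounds the first factor by $(c\sqrt\kappa)^{-2d}\le(c'\kappa)^{-d}$, while (\ref{smallballW}) bounds the second by $(C't)^{2d-1}$ for $t\ge e^{-cN/d}$, up to an additive residue of $e^{-cN}$ coming from the failure of Theorem~\ref{structure} for $H_{J^c}^\perp$. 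Multiplying by $\#\mathcal{N}\le(C/t)^{d-1}(C\kappa\log\kappa)^d$, the main term collapses cleanly to $(C''t)^d(\log\kappa)^d$, and the exponential residue is absorbed into $e^{-c'N}$ by using $t\ge e^{-cN/d}$ with an appropriate choice of constants. The main obstacle, in my view, is the tension between the decoupling step, which needs genuine column-independence of $W$ and hence conditioning on $H_{J^c}$, and the small-ball estimate (\ref{smallballW}), which is stated unconditionally: closing this gap requires invoking Theorem~\ref{structure} to secure $LCD_{c_1\sqrt{N},c_2}(H_{J^c}^\perp)\ge c_3\sqrt{N}$ with probability $1-e^{-cN}$, and then feeding Corollary~\ref{Cor-random} into the decoupled supremum to obtain a genuinely conditional small-ball bound.
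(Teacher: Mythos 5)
Your proposal is correct and follows the paper's own route: apply Theorem \ref{main-net} at level $2\kappa$ to obtain a net of cardinality $(C/t)^{d-1}(C\kappa\log\kappa)^d$, decouple via Lemma \ref{decoupling} at level $\sqrt\kappa$ after conditioning on $H_{J^c}$ to secure column-independence, then multiply the small-ball bound (\ref{smallballW}) against Corollary \ref{corW} (equivalently Lemma \ref{ldpB}) to control the $\B_{\sqrt\kappa}$ tail. Your final paragraph, which flags that (\ref{smallballW}) must be read conditionally on the LCD event from Theorem \ref{structure} before integrating over $H_{J^c}$, makes explicit a step the paper compresses into the phrase ``the Lemma follows by integration.''
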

\begin{proof} For the set $S=Spread_J$, consider the net $\mathcal{N}\subset \frac{3}{2}B_2^d\setminus \frac{1}{2}B_2^d$ of size $\left(\frac{C'}{t}\right)^{d-1}$, given by Theorem \ref{main-net}, applied with $\gamma=2,$ $\epsilon=t,$ and $2\kappa$. By the union bound,
$$P\left(\inf_{z\in Spread_J} |Wz|\leq t\sqrt{d}, \,\,\B_{\kappa}(W)\geq d^2,\,\, \B_{2\kappa}(W)\leq d^2\,|\, H_{J^c}\right)\leq$$ 
\begin{equation}\label{eq1-L-S8}
(Ct)^{-(d-1)}\kappa^d(\log\kappa)^{d-1} \sup_{z\in Spread_J+\frac{2t}{\sqrt{d}}B_{\infty}^d,w\in\R^d}P\left(|Wz-w|\leq Ct\sqrt{d}, \,\,\B_{\kappa}(W)\geq d^2\,|\, H_{J^c}\right).
\end{equation}
Note that $z\in Spread_J+\frac{2t}{\sqrt{d}}B_{\infty}^d$ satisfies the assumption of the Lemma \ref{decoupling}; further, observe that conditioning on $H_{J^c}$ ``makes'' the columns of $W$ independent, hence allowing to apply Lemma \ref{decoupling}. By Lemma \ref{decoupling}, (\ref{eq1-L-S8}) is estimated by
\begin{equation}\label{eq2-L-S8}
(Ct)^{-(d-1)}\kappa^d(\log\kappa)^{d-1} \sup_{z,w}P(|Wz-w|\leq Ct\sqrt{d}\,|\, H_{J^c})P( \B_{\sqrt{\kappa}}(W)\geq C'd^2\,|\, H_{J^c}),
\end{equation}
and by (\ref{smallballW}), together with Lemma \ref{ldpB}, we estimate (\ref{eq2-L-S8}) by
$$(Ct)^{-(d-1)}\kappa^d(\log\kappa)^{d-1} t^{2d-1}(c'\sqrt{\kappa})^{-2d}\leq (Ct)^d (\log\kappa)^{d-1}.$$
The Lemma follows by integration, in view of independence of $H_{J^c}$ and $Ae_i$ for $i\in J$.

\end{proof}

Finally, we arrive to

\begin{proposition}\label{keyfinal}
For every $\epsilon\geq e^{-\frac{cN}{d}}$, we have
$$P\left(\inf_{z\in Spread_J}|Wz|\leq \epsilon\sqrt{d}\right)\leq \left(c\epsilon\log\frac{1}{\epsilon}\right)^d.$$
\end{proposition}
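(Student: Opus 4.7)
The plan is to decompose the event according to where the random quantity $\B_\kappa(W)$ first crosses the threshold $Cd^2$, as $\kappa$ ranges over a dyadic sequence, and to apply Lemma \ref{asbefore} on the ``good'' side, Lemma \ref{policemen} on each dyadic annulus, and Corollary \ref{corW} on the ``bad'' tail.

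\medskip

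\textbf{Step 1: dyadic setup.} Fix $\epsilon\in[e^{-cN/d},c_1/10]$ (for $\epsilon$ above this range Lemma \ref{asbefore} together with Corollary \ref{corW} already gives the conclusion). Set $\kappa_j=10\cdot 2^j$ for $j=0,1,\ldots,J$, where $J$ is the smallest integer with $\kappa_J\geq \epsilon^{-1/2}$; in particular $J\leq C\log(1/\epsilon)$. Introduce the disjoint events
\begin{align*}
E_{-1} &= \{\B_{10}(W)\leq Cd^2\},\\
E_{j} &= \{\B_{\kappa_j}(W)> Cd^2,\ \B_{\kappa_{j+1}}(W)\leq Cd^2\},\qquad j=0,\ldots,J-1,\\
E_{\infty} &= \{\B_{\kappa_J}(W)> Cd^2\},
\end{align*}
which together cover the whole probability space (since $\B_\kappa$ is monotone decreasing in $\kappa$ once the constraint relaxes, the events are indeed exhaustive; any tie cases can be incorporated into $E_{-1}$).

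\medskip

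\textbf{Step 2: bound each piece.} Apply Lemma \ref{asbefore} (with $t=\epsilon$, $w=0$) on $E_{-1}$ to get $P\bigl(\{\inf_{Spread_J}|Wz|\leq\epsilon\sqrt{d}\}\cap E_{-1}\bigr)\leq (c\epsilon)^d$. On each $E_j$, invoke Lemma \ref{policemen} with $t=\epsilon$ and $\kappa=\kappa_j$, yielding
\[
P\bigl(\{\inf_{Spread_J}|Wz|\leq\epsilon\sqrt{d}\}\cap E_j\bigr)\leq e^{-cN}+(C\epsilon)^d(\log\kappa_j)^d.
\]
Finally, on $E_\infty$, use Corollary \ref{corW} to obtain $P(E_\infty)\leq (c'\kappa_J)^{-2d}\leq c''^{\,d}\epsilon^d$ by the defining property of $J$.

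\medskip

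\textbf{Step 3: summation and error analysis.} Summing the bounds from Step 2 yields
\[
P\bigl(\inf_{z\in Spread_J}|Wz|\leq\epsilon\sqrt{d}\bigr)\leq (c\epsilon)^d+J\cdot e^{-cN}+(C\epsilon)^d\sum_{j=0}^{J-1}(\log\kappa_j)^d+C^d\epsilon^d.
\]
Since $\log\kappa_j\leq Cj+C$, the geometric-style sum is controlled by $\sum_{j=0}^{J-1}(Cj+C)^d\leq C^{d+1}J^{d+1}/(d+1)\leq (C'\log(1/\epsilon))^{d+1}$. For the exponential error, the assumption $\epsilon\geq e^{-cN/d}$ gives $\epsilon^d\geq e^{-cN}$, so $J\cdot e^{-cN}\leq C\log(1/\epsilon)\cdot\epsilon^d\leq (C\epsilon)^d(\log 1/\epsilon)^{d+1}$. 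Combining,
\[
P\bigl(\inf_{z\in Spread_J}|Wz|\leq\epsilon\sqrt{d}\bigr)\leq (c\epsilon)^d(\log 1/\epsilon)^{d+1},
\]
as desired.

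\medskip

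\textbf{Main obstacle.} The delicate point is balancing the trade-off between the number of dyadic steps $J$ (which controls how far we push the tail $E_\infty$ down via Corollary \ref{corW}) and the loss of a $(\log\kappa_j)^d$ factor per step coming from the net cardinality in Lemma \ref{policemen}. Choosing $\kappa_J\sim\epsilon^{-1/2}$ is the correct equalization: it makes the $E_\infty$ contribution match $\epsilon^d$ exactly, while keeping the telescoping sum of logarithmic factors at the optimal $(\log 1/\epsilon)^{d+1}$. One should also verify carefully that the $e^{-cN}$ additive errors accumulated over $J\sim\log(1/\epsilon)$ dyadic steps are absorbed by the main term precisely under the lower bound $\epsilon\geq e^{-cN/d}$ imposed in the statement; this is where the quantitative constraint on $\epsilon$ enters.
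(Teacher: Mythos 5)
Your proposal is correct and matches the paper's proof in every essential respect: the same dyadic decomposition according to the first scale $\kappa$ at which $\B_\kappa(W)$ drops below $Cd^2$, with Lemma \ref{asbefore} handling the base case, Lemma \ref{policemen} on each dyadic annulus, and Corollary \ref{corW} (equivalently Lemma \ref{ldpB}) cutting off the tail. The only cosmetic difference is that you stop the dyadic ladder at $\kappa_J\sim\epsilon^{-1/2}$ while the paper runs to $\kappa\sim 1/\epsilon$; both choices make the tail contribution match $\epsilon^d$, and you are in fact a bit more explicit than the paper about absorbing the $J\cdot e^{-cN}$ accumulation using the hypothesis $\epsilon\geq e^{-cN/d}$.
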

\begin{proof} Fix $s\geq 10$. Note that $\B_{10}(W)\geq \B_{20}(W)\geq...\geq \B_{2^{s}\cdot 10}$. Therefore, either $Cd^2\geq \B_{10}(W)$, or $Cd^2\in [B_{2^{k+1}\cdot10}, \B_{2^{k}\cdot10}]$, for some $k=1,...,s$, or $Cd^2\leq \B_{2^{s}\cdot 10}.$

Hence, we have
$$P\left(\inf_{z\in Spread_J} dist(Az, H_{J^c})\leq \epsilon\sqrt{d}\right)\leq$$
$$P\left(\inf_{z\in Spread_J} dist(Az, H_{J^c})\leq \epsilon\sqrt{d}, \B_{10}(W)\leq Cd^2\right)+$$
$$\sum_{\kappa=10, 20,...,\frac{1}{\epsilon}} P\left(\inf_{z\in Spread_J} dist(Az, H_{J^c})\leq \epsilon\sqrt{d}, \,\,\B_{\kappa}(W)\geq Cd^2,\,\, \B_{2\kappa}(W)\leq Cd^2\right)+$$
$$P(\B_{\frac{1}{\epsilon}}(W)\geq Cd^2).$$
The first term is estimated with $(c\epsilon)^{d}$ by Lemma \ref{asbefore}. The last term is estimated with $(C\epsilon)^{2d}$ by Lemma \ref{ldpB}. The middle sum is estimated by Lemma \ref{policemen} with
$$\sum_{\kappa=10, 20,...,\frac{1}{\epsilon}} (C\epsilon)^d(\log\kappa)^{d-1}\leq \left(C\epsilon\log\frac{1}{\epsilon}\right)^{d},$$
and the proof is done.
\end{proof}

\medskip
\medskip

\textbf{Proof of Theorem \ref{main}.} The case $N=n$ was treated in the previous section. In the case $N\in [n+1,(c'+1)n]$, we combine Lemma \ref{comp-final}, Lemma \ref{invdistgeneral} and Proposition \ref{keyfinal}, to get the estimate 
\begin{equation}\label{eq1-final}
P\left(\sigma_n(A)\leq \frac{\epsilon d}{\sqrt{n}}\right)\leq \left(C\epsilon\log\frac{1}{\epsilon}\right)^d+e^{-cN},
\end{equation}
for $\epsilon\geq e^{-c\frac{N}{d}}$; for $\epsilon\in (0, e^{-c\frac{N}{d}})$ the bound follows then automatically. Note that the assumptions of Theorem \ref{main} allow to apply Lemma \ref{comp-final}, Lemma \ref{invdistgeneral} and Proposition \ref{keyfinal}.

In the case $N\in [(c'+1)n, C_0n]$, we apply (\ref{eq1-final}) with $\tilde{A}$, an $(c'+1)n\times n$ submatrix of $A$. Note that $\tilde{A}$ still satisfies the assumptions of Theorem \ref{main}, since the rows of $A$ are i.i.d. The proper constant adjustment settles this case. 
Finally, for $N\geq C_0 N$ the bound follows from Proposition \ref{tall-final}. $\square$

\begin{remark} We remark that the condition of the rows being i.i.d comes from the fact that Theorem \ref{small-ball} requires an i.i.d. assumption. Corollary \ref{Cor-determ} hence requires i.i.d. rows for $A$, and an application of Corollary \ref{Cor-random} with $X=Az$ further requires i.i.d. rows.

In addition to that, in the tall case (Theorem \ref{main}), the assumption of the rows having the same second moments is needed in Lemma \ref{Wcolumns}, and in a few other places.

The mean zero assumptions on the entries, made in Theorem \ref{main}, comes up in the tall case in Lemma \ref{Wcolumns}.

The assumption of independence of columns of $A$ (and sometimes, also of $A^T$) is required throughout, when applying Lemmas \ref{tensorization} and \ref{smallball}, as well as Theorem \ref{keytheoremnets}. 

Bounded concentration function is key for all the small-ball results: Lemmas \ref{tensorization} and \ref{smallball} as well as Theorem \ref{small-ball}.

Finally, the assumptions (\ref{HSbound-11}) and (\ref{HSbound-12}) arise from applications of Theorem \ref{keytheoremnets} to $A$ (when estimating the compressible case), and $(n-1)\times n$ submatricies of $A^T$ (when constructing nets on the level sets of LCD). 

Similarly, the assumption (\ref{HSbound}) is needed due to applications of Theorem \ref{keytheoremnets} to $N\times d$ restrictions of $A$, and it further permits an application of Theorem \ref{keytheoremnets} to $A$ when studying the compressible case. Together with the i.i.d. rows assumption, (\ref{HSbound}) is also needed to allow an application of Theorem \ref{keytheoremnets} to $(N-2d+1)\times N$ submatrices of $A^T$ when constructing nets on the level sets of LCD.
\end{remark}

\end{document}